\numberwithin{equation}{section}
\DeclareFontFamily{U}{FdSymbolC}{}
\DeclareFontShape{U}{FdSymbolC}{m}{n}{<-> s * FdSymbolC-Book}{}
\DeclareSymbolFont{fdarrows}{U}{FdSymbolC}{m}{n}
\DeclareMathSymbol{\spoon}{\mathrel}{fdarrows}{"6B}
\newtheorem{thm}{Theorem}[section]
\newtheorem{lem}[thm]{Lemma}
\newtheorem{prop}[thm]{Proposition}
\newtheorem{cor}[thm]{Corollary}
\newtheorem{dfn}[thm]{Definition}
\newtheorem{prob}[thm]{Problem}
\newtheorem{mainthm}{Theorem}
\newtheorem*{Szeg}{Szeg\H{o}'s theorem}
\theoremstyle{remark}
\newtheorem{ex}[thm]{Example}
\newtheorem{rmk}[thm]{Remark}
\newcommand{\bs}[1]{\boldsymbol{#1}}
\renewcommand{\rm}[1]{\mathrm{#1}}
\renewcommand{\cal}[1]{\mathcal{#1}}
\newcommand{\bbC}{\mathbf{C}}
\newcommand{\bbN}{\mathbf{N}}
\newcommand{\bbR}{\mathbf{R}}
\newcommand{\bbT}{\mathbf{T}}
\newcommand{\bbZ}{\mathbf{Z}}
\newcommand{\rmS}{\mathrm{S}}
\newcommand{\rmH}{\mathrm{H}}
\newcommand{\rmM}{\mathbf{M}}
\newcommand{\rmh}{\mathrm{h}}
\newcommand{\X}{\mathcal{X}}
\newcommand{\A}{\mathfrak{A}}
\newcommand{\B}{\mathfrak{B}}
\newcommand{\frD}{\mathfrak{D}}
\newcommand{\frL}{\mathfrak{L}}
\newcommand{\M}{\mathfrak{M}}
\newcommand{\N}{\mathfrak{N}}
\newcommand{\G}{\Gamma}
\renewcommand{\L}{\Lambda}
\renewcommand{\S}{\Sigma}
\renewcommand{\a}{\alpha}
\renewcommand{\b}{\beta}
\newcommand{\eps}{\varepsilon}
\renewcommand{\phi}{\varphi}
\newcommand{\g}{\gamma}
\renewcommand{\l}{\lambda}
\newcommand{\s}{\sigma}
\newcommand{\ol}[1]{\overline{#1}}
\renewcommand{\t}[1]{\widetilde{#1}}
\newcommand{\fin}{\nolinebreak\hspace{\stretch{1}}$\lhd$}
\newcommand{\bspi}{\boldsymbol{\pi}}
\newcommand{\dom}{\mathrm{dom}\,}
\newcommand{\vol}{\mathrm{vol}}
\newcommand{\tr}{\overline{\mathrm{tr}}}
\newcommand{\Tr}{\mathrm{tr}}
\newcommand{\Det}{\mathrm{det}}
\begin{document}

\title{Entropy and determinants for unitary representations}

\author{Tim Austin}


\maketitle
\RenewDocumentCommand{\headrulewidth}{}{0cm}
\thispagestyle{fancy}
\fancyhf{}
\fancyhf[FLO]{ {\small \quad \emph{Keywords}: Szeg\H{o}'s limit theorem, positive functional, completely positive map, amenability, soficity, hyperlinearity, Fuglede--Kadison determinant, Kolmogorov--Sinai entropy, random past, sofic entropy, subdiagonal subalgebra}

{\small \quad \emph{MSC2020}: Primary: 43A35, 43A65, 46L30, 22D25; Secondary: 28D20, 28D15, 46K50, 46L51, 46L52}}

\begin{abstract}
Ergodic theory includes several notions of entropy for probability-preserving actions of countable groups.  These include Kolmogorov--Sinai entropy based on F\o lner sequences for amenable groups, entropy defined using a random ordering of the group, and Bowen's sofic entropy for sofic groups.

In this work we pursue these notions across an analogy between ergodic theory and representation theory. We arrive at new quantities associated to unitary representations of groups and representations of other C*-algebras.  Our main results show that these new quantities can often be evaluated as Fuglede--Kadison determinants.  The resulting determinantal formulas offer various non-commutative generalizations of Szeg\H{o}'s limit theorem for Toeplitz determinants.  They make contact with Arveson's theory of subdiagonal subalgebras, and also with some entropy formulas in the ergodic theory of actions by automorphisms of compact Abelian groups.
\end{abstract}
\vspace{7pt}

\setcounter{tocdepth}{1}
\tableofcontents

\pagestyle{plain}

\section{Introduction}

\subsection{A Szeg\H{o} limit theorem over amenable groups}

If $\phi$ is a positive definite function on $\bbZ$, then Bochner's theorem identifies it as the Fourier--Stieltjes transform of a finite Borel measure $\mu$ on the circle group $\bbT$.  Let $m_\bbT$ be the Lebesgue probability measure on $\bbT$. In this context, Szeg\H{o}'s limit theorem describes the asymptotic behaviour of finite-dimensional Toeplitz determinants obtained from $\phi$:

\begin{Szeg}
Let $D_n$ be the determinant of the Toeplitz matrix $[\phi(i-j)]_{i,j=1}^n$ for each $n$, and let $\mu_{\rm{ac}}$ be the absolutely continuous part of $\mu$.  Then
\begin{equation}\label{eq:Szeg}
D_n^{1/n} \to \exp\int_\bbT \log \frac{d\mu_{\rm{ac}}}{dm_\bbT}\,dm_\bbT \qquad \hbox{as}\ n\to\infty,
\end{equation}
taking the right-hand side to be $\exp(-\infty) = 0$ if necessary. \qed
\end{Szeg}

See~\cite[Chap. 1]{SimOPUCI} for a broader overview of this area and a history of some of the key contributions, starting with Szeg\H{o}'s and Verblunsky's. Szeg\H{o}'s theorem also has a generalization in which $\mu$ and $\phi$ take values among positive definite $k$-by-$k$ matrices~\cite[Sec. 2.13]{SimOPUCI}.

The right-hand side of~\eqref{eq:Szeg} admits the following `non-commutative' point of view.  Let $f := d\mu_{\rm{ac}}/d m_\bbT$, so this is an element of $L^1(\bbT)$.  We can regard $L^\infty(\bbT)$ as a von Neumann algebra of multiplication operators on $L^2(\bbT)$, and then integration with respect to $m_\bbT$ defines a tracial state on $L^\infty(\bbT)$.  If $f$ is essentially bounded, then the multiplication operator $M_f$ belongs to this von Neumann algebra; in general, $M_f$ can be defined as an operator affiliated to that algebra. The right-hand side of~\eqref{eq:Szeg} is the logarithm of the Fuglede--Kadison determinant of $M_f$ with respect to $m_\bbT$ (see Subsection~\ref{subs:tracial}).  This interpretation goes back at least to Arveson's conjectured generalization of Szeg\H{o}'s theorem for `subdiagonal subalgebras' in~\cite[property 4.4($\g)$]{Arv67}, which was proved after many years by Labuschagne in~\cite{Lab05}.  (We return to this connection in Subsection~\ref{subs:past}.)

Our first main result is a version of Szeg\H{o}'s theorem for matrix-valued positive definite functions on a countable amenable group $\G$.  Since $\G$ is countable, amenability means that it has a \textbf{right F\o lner sequence}: a sequence $(F_n)_{n\ge 1}$ of finite subsets such that
\begin{equation}\label{eq:right-Folner}
|F_n\triangle F_n g| = o(|F_n|) \qquad \hbox{as $n\to\infty$ for every $g \in \G$}.
\end{equation}
Inversion in the group converts a right F\o lner sequence into a left F\o lner sequence, so the existence of either may be used to define amenability.  See~\cite[Sec. 2.6]{BroOza08} for several other characterizations and properties of this class of groups.

Let $\phi:\G\to \rmM_k$ be positive definite, and for each finite subset $F$ of $\G$ consider the $F$-by-$F$ block matrix
\[\phi[F] := [\phi(g^{-1}h):\ g,h \in F].\]
If $\phi$ is associated to a unitary representation $\pi$ by the vectors $v_1$, $v_2$, \dots, $v_k \in H_\pi$ (see Subsection~\ref{subs:types}), then $\phi[F]$ is the Gram matrix of the tuple of vectors
\[[\pi(g)v_i:\ g \in F,\ i=1,\dots,k],\]
so it is positive semi-definite.

Let $\l$ be the left regular representation.  Then $\phi$ has a unique `Lebesgue decomposition' as $\phi_{\rm{sing}} + \phi_{\rm{ac}}$, where the minimal dilation of $\phi_{\rm{sing}}$ is disjoint from $\l$ and the minimal dilation of $\phi_{\rm{ac}}$ is contained in $\l^{\oplus \infty}$ (see Subsection~\ref{subs:cp-maps-compar}).  In addition, $\phi_{\rm{ac}}$ can be represented in terms of a self-adjoint operator affiliated to $\l^{\oplus k}(\G)'$ (see Proposition~\ref{prop:RadNik}), and using this we can define the Fuglede--Kadison determinant $\Delta \phi_{\rm{ac}}$ (see Definition~\ref{dfn:FK-det-ac}).

\begin{mainthm}\label{mainthm:amenable}
Any right F\o lner sequence $(F_n)_{n\ge 1}$ satisfies
\begin{equation}\label{eq:amenable}
(\det \phi[F_n])^{1/|F_n|} \to \Delta \phi_{\rm{ac}}\qquad \hbox{as}\ n\to\infty.
\end{equation}
\end{mainthm}

Notice that the contribution of the singular part $\phi_{\rm{sing}}$ to the left-hand side of~\eqref{eq:amenable} disappears in the limit, just as in Szeg\H{o}'s original theorem.

The literature contains a number of precedents lying between Szeg\H{o}'s theorem and Theorem~\ref{mainthm:amenable}.  Firstly, generalizations of Szeg\H{o}'s theorem to positive definite functions on $\bbZ^d$ were obtained in~\cite{HelLow58,Linnik75,Dok84} under various extra hypotheses.  More recently,~\cite[Thm. 3.2]{Den06} implies Theorem~\ref{mainthm:amenable} in case $\phi_{\rm{sing}} = 0$ and $\phi$ can be expressed in terms of a positive invertible element of the von Neumann algebra $\rmM_k(\l(\G)'')$.  We discuss the recent literature more fully in Subsection~\ref{subs:A-rmks}.  

\subsection{Analogies with entropy and ergodic theory}\label{subs:analogies}

For a $k$-tuple of vectors in a Hilbert space, their Gram matrix $Q$ specifies their lengths and relative positions~\cite[Sec. 7.2]{HorJohMA}.  In doing so, it is roughly analogous to the joint distribution of $k$ discrete random variables.  The quantity $\log \det Q$ is the analog of the joint Shannon entropy of those random variables.

More formally, if $(X_1,\dots,X_k)$ is a multivariate Gaussian random vector with covariance matrix $Q$, then its differential Shannon entropy equals $\log \det Q$ up to a normalization~\cite[Thm. 8.4.1]{CovTho06}.  From this point of view, the analogy sketched above is really between discrete and differential entropy.  However, the present paper concerns a purely `linear' setting, so we generally discuss this analogy without the extraneous construct of a Gaussian random vector.

Now let $\G$ be a countable group, and consider actions of two different kinds: measure-preserving actions on probability spaces, and unitary actions on Hilbert spaces.  These two settings enjoy a $\G$-equivariant version of the analogy between joint distributions and Gram matrices.  If $(X,\mu,T)$ is a measure-preserving $\G$-system and $\a:X\to A$ is a finite-valued observable, then it generates the shift-system $(A^\G,\a^\G_\ast \mu,S)$, where $\a^\G_\ast \mu$ is the law of the $\G$-indexed stochastic process $(\a\circ T^g:\ g \in \G)$.  Analogously, if $\pi$ is a unitary representation of $\G$ and $v_1,\dots,v_k$ are vectors in it, then these define the $\rmM_k$-valued positive definite function $[\langle \pi(\cdot)v_j,v_i\rangle]$ on $\G$.

This analogy has stimulated research in both areas. For example, it underlies Kechris' adaptation of the relation of weak containment to measure-preserving systems~\cite{KecGAEGA,BurKec20}.  Within this analogy, the logarithm of the limit on the left-hand side in Theorem~\ref{mainthm:amenable} is the analog of the Kolmogorov--Sinai entropy of a stationary process over an amenable group~\cite{Kie75}.

The analogy between ergodic theory and representation theory crystallizes into at least two different formal relationships.  On the one hand, any measure-preserving system gives rise to its Koopman representation~\cite[Sec. II.10]{KecGAEGA}.  On the other hand, any orthogonal real representation of $\G$ can be used to construct a measure-preserving action on a Gaussian Hilbert space, and this construction can be adapted to start with a unitary representation instead~\cite[Apps. C--E]{KecGAEGA}.  However, the first of these relationships does not correctly connect the notions of entropy that we study in this work, and the second introduces unnecessary complications.

On the one hand, the limit in~\eqref{eq:amenable} generally bears no relation to Kolmogorov--Sinai entropy when $\phi$ is associated to a Koopman representation; indeed, the quantity in~\eqref{eq:amenable} does not even define an invariant of unitary equivalence in general.

On the other hand, if we start with a unitary representation and construct the associated Gaussian system, then there are cases in which a suitable limit of log-determinants for the former should equal a `differential' analog of Kolmogorov--Sinai entropy for the latter.  For single transformations or actions of $\bbZ^d$, differential Kolmogorov--Sinai entropy for stationary real-valued processes was studied in~\cite{HamParTho08}, prompted by earlier work in information theory such as~\cite{Pin64}.  Section 6 of~\cite{HamParTho08} includes some exact calculations for Gaussian systems and linear transformations between them which boil down to applications of Szeg\H{o}'s theorem itself.  However, this differential version of Kolmogorov--Sinai entropy starts to behave quite wildly beyond those Gaussian examples, and for those examples alone we might as well stay within the setting of representation theory.  Overall, an analogy at the level of intuition seems more revealing for our work in this paper than either the Koopman or the Gaussian construction.

Beyond Theorem~\ref{mainthm:amenable}, the present paper also considers two other notions of entropy from ergodic theory, and develop their analogs for unitary representations of groups or representations of other C*-algebras.  The first of these notions is entropy defined using a `random past' as in~\cite[Thm. 3]{Kie75}, which leads to Theorem~\ref{mainthm:past}.  The second is `sofic entropy' from~\cite{Bowen10}, which leads to Theorem~\ref{mainthm:det-form}.  In both cases, we show that these notions of entropy are given by $\log \Delta \phi_{\rm{ac}}$ for some positive functional $\phi$, except in Theorem~\ref{mainthm:det-form} a certain degeneracy may occur and then the entropy equals $-\infty$ (also reflecting a known feature of sofic entropy).

Each of these theorems may be seen as a different `non-commutative' version of Szeg\H{o}'s theorem.  In ergodic theory, various predecessors of our results are already discussed this way in the literature, for example in~\cite{Lyons05,LiTho14,Hayes16}.

\subsection{Random orders and Schur complements}\label{subs:past}

Szeg\H{o}'s theorem has several proofs.  The classical ones often begin with a reformulation which implicitly uses the total ordering of $\bbZ$.  First, the Schur determinant formula give the relation
\[\frac{D_{n+1}}{D_n} = \|1_\bbT - P_n(1_\bbT)\|^2,\]
where $P_n$ is the orthogonal projection from $L^2(\mu)$ to $\rm{span}\{z, \dots, z^n\}$~\cite[Thm. 1.5.11]{SimOPUCI}.  It follows that
\[\lim_{n\to\infty}D_n^{1/n} = \lim_{n\to\infty}\frac{D_{n+1}}{D_n} = \|1_\bbT - P(1_\bbT)\|^2,\]
where $P$ is the orthogonal projection from $L^2(\mu)$ to $N := \overline{\rm{span}}\{z, z^2,\dots\}$ (taking the closure in $L^2(\mu)$).  This orthogonal projection is the closest point of $N$ to the function $1_\bbT$, so Szeg\H{o}'s theorem is equivalent to
\begin{equation}\label{eq:Szeg-again}
\inf_{f \in N}\int_\bbT |1-f|^2\ d\mu = \exp\int_\bbT \log\frac{d\mu_{\rm{ac}}}{dm_\bbT}\ dm_\bbT.
\end{equation}
This is the form in which Szeg\H{o}'s theorem most often appears in the literature on analytic functions, such as in~\cite[Sec. V.8]{GameUA}.  That reference gives essentially Szeg\H{o}'s own proof, which he published first for the special case $\mu \ll m$, and then much later for the general case by incorporating arguments of Kolmogorov and Krein to handle $\mu_{\rm{sing}}$; see also~\cite[Secs. 2.4--5]{SimOPUCI}.

Viewed from ergodic theory, the reformulation~\eqref{eq:Szeg-again} is the analog of the formula for the entropy rate of a stationary finite-valued stochastic process $(\xi_n)_{n=-\infty}^\infty$ in terms of its `past':
\begin{equation}\label{eq:entropy-past}
\rmh(\xi) = \rmH(\xi_0\mid \xi_{-1},\xi_{-2},\dots).
\end{equation}
See, for instance,~\cite[Thm. 4.14]{Walters--book}.

To generalize Szeg\H{o}'s theorem to positive definite functions on $\bbZ^d$, one can use the `past' defined by a lexicographic ordering.  This approach was developed by Helson and Lowdenslager in~\cite{HelLow58,HelLow61} (with some later refinements in~\cite{Lyons03}).  In ergodic theory, the same use of the lexicographic ordering appeared in some early works on the entropy of measure-preserving $\bbZ^d$-actions such as~\cite{Con7273,KatzWei72}.

This idea generalizes naturally to any countable group $\G$ that admits a left-invariant total order.  But some countable groups do not, including all groups that are not torsion-free.  To remove the need for this assumption, one can instead couple to a stationary \emph{random} ordering of the group.  All countable groups admit at least one of these: the `Bernoulli random order' (Example~\ref{ex:Bern} below).

In ergodic theory, conditioning on the `past' of a stationary random order can sometimes serve as a replacement for the formula~\eqref{eq:entropy-past}.  If $\G$ is amenable, this idea leads to another classical formula for the entropy rate of a process which goes back to Kieffer~\cite{Kie75} and Stepin~\cite{Stepin78}.  Our next main theorem develops the analog of this idea for positive definite functions.  We find once again that the resulting quantity always agrees with the expected Fuglede--Kadison determinant, even if $\G$ is not amenable.

Before formulating the theorem precisely, let us motivate it further via a related finite-dimensional calculation.  Let $n$ and $k$ be positive integers.  Consider $nk$ vectors in some Hilbert space $H$ indexed as an $n$-by-$k$ array, say
\[V = [x_{m,i}:\ m=1,\dots,n,\ i=1,\dots,k].\]
Let $V_m$ be the $k$-tuple $[x_{m,1},\dots,x_{m,k}]$ for each $m$.  Then the Gram matrix of $V$ obtains an $n$-by-$n$ block structure: $V^\ast V = [V_p^\ast V_m:\ m,p= 1,\dots,n]$.  In addition, let $R_m$ be the orthogonal projection onto the subspace
\[\rm{span}\{x_{p,i}:\ 1 \le p < m,\ i=1,\dots,k\} \qquad (m=1,\dots,n),\]
and let $R_m^\perp := I_H - R_m$.  We can express the determinant of $V^\ast V$ in terms of these subspaces by an iterated appeal to Schur's determinantal formula~\cite[Subsec. 0.8.5]{HorJohMA}.  Within the analogy between Gram matrices and joint distributions from Subsection~\ref{subs:analogies}, this formula is the analog of the chain rule for discrete Shannon entropy.  Taking logarithms and normalizing, the result is
\begin{equation}\label{eq:iterated-Schur}
\frac{1}{n}\log \det(V^\ast V) =\frac{1}{n}\sum_{m=1}^n \log \det ((R_m^\perp V_m)^\ast (R_m^\perp V_m)).
\end{equation}

Formula~\eqref{eq:iterated-Schur} remains valid under any re-ordering of the tuples $V_1$, \dots, $V_n$ (bearing in mind that each $R_m$ depends on the whole order).  So we can take an expectation over a uniform random order on the right-hand side.  Then the symmetry of the random order lets us replace the average of $n$ terms with a single average over orders.  Using $\omega$ to denote a permutation of $1$, \dots, $n$, and now writing $R_\omega$ for the orthogonal projection onto the random subspace
\[\rm{span}\{x_{p,i}:\ 1\le \omega(p) < \omega(1),\ i=1,\dots,k\},\]
we arrive at
\begin{equation}\label{eq:iterated-Schur-2}
\frac{1}{n}\log \det(V^\ast V) = \frac{1}{n!}\sum_{\omega \in S_n} \log \det ((R_\omega^\perp V_1)^\ast (R_\omega^\perp V_1)).
\end{equation}

Our next theorem is an infinite-dimensional, equivariant generalization of~\eqref{eq:iterated-Schur-2}.  Let $\G$ be a countable group, let $\Omega$ be the compact space of total orders on $\G$, and let $\mu$ be a left-invariant Borel probability measure on $\Omega$.  Let $\phi:\G\to \rmM_k$ be positive definite, and suppose it is associated to the representation $\pi$ by the $k$-tuple $x_1$, \dots, $x_k$ in $H_\pi$.  Finally, for each $\omega$, let $R_\omega$ be the orthogonal projection from $H_\pi$ to the closed subspace
\[\ol{\rm{span}}\{\pi(g)x_i:\ g <_\omega e,\ i=1,\dots,k\}.\]

\begin{mainthm}\label{mainthm:past}
In the situation above, we have
\begin{equation}\label{eq:past}
\exp\int \log \det [\langle R_\omega^\perp x_j,R_\omega^\perp x_i\rangle]\ d\mu(\omega) = \Delta \phi_{\rm{ac}}.
\end{equation}
\end{mainthm}

For example, when $k=1$ this simplifies to
\[\exp\int \log \|x - R_\omega x\|^2\ d\mu(\omega) = \Delta \phi_{\rm{ac}}.\]

From the viewpoint of ergodic theory, Theorem~\ref{mainthm:past} is somewhat surprising, because random-order entropy for a finite-valued stationary processes can misbehave when $\G$ is not amenable.  It agrees with Rokhlin or sofic entropy (discussed below) for some special examples of processes~\cite{AusPod--percolative,Alp--randomorder}, but in general it is only an upper bound for those quantities, and it need not be invariant under isomorphism~\cite[Sec. 7]{Seward--randomorder}.  By contrast, when both Theorems~\ref{mainthm:past} above and~\ref{mainthm:det-form} below can be applied to a positive definite function $\phi:\G\to\rmM_k$, they always give the same value $\log\Delta \phi_{\rm{ac}}$ for their respective notions of entropy.  So some serious pathologies from ergodic theory do not appear for unitary representations.

We prove Theorem~\ref{mainthm:past} in Section~\ref{sec:past}.  We do this by making contact with Arveson's theory of `subdiagonal subalgebras' of finite von Neumann algebras~\cite{Arv67}.  These are defined by axioms abstracted from the inclusion $H^\infty(\bbT) \subset L^\infty(\bbT)$, which becomes a commutative example; see also~\cite[Sec. 8]{PisXuNCLP} or~\cite{BlecLab07b} for surveys in the context of noncommutative Lebesgue spaces.  A subdiagonal subalgebra provides a finite von Neumann algebra with an abstract notion of `past'.  Among his other examples, Arveson showed how to construct a subdiagonal subalgebra from any countable group with an invariant total order, and our starting point is a generalization of this construction using an invariant random order and a crossed product.  Then we derive Theorem~\ref{mainthm:past} via a more abstract version of the same result (Theorem~\ref{thm:subdiag-Szego}), which we deduce from the Arveson--Labuschagne generalization of Jensen's inequality from~\cite{Arv67} and ~\cite{Lab05}.

\subsection{Almost periodic entropy}

Since its introduction in~\cite{Bowen10}, Bowen's notion of sofic entropy has taken a central place in the ergodic theory of actions of non-amenable groups. The survey~\cite{Bowen--survey} offers a thorough introduction.  This is the final notion of entropy that we pursue across the analogy between ergodic theory and representation theory in this paper. 

In this effort, we quickly find it helpful to allow the generality of unital representations of a separable, unital C*-algebra $\A$, rather than just unitary representations of a group.  The case of a countable group $\G$ is recovered when $\A = C^\ast \G$.  Even in that special case, certain auxiliary constructions lead us to consider other C*-algebras as well.

Given a representation $\pi$ of $\A$ and vectors $v_1$, \dots, $v_k \in H_\pi$, define their \textbf{type} to be the $\rmM_k$-valued completely positive map
\[\Phi^\pi_{v_1,\dots,v_k}(a) := [\langle \pi(a)v_j,v_i\rangle]_{i,j =1}^k \qquad (a \in \A).\]
If $O$ is any set of $\rmM_k$-valued completely positive maps for a fixed value of $k$, and $\pi$ is any representation, then we define
\[\X(\pi,O) := \{(v_1,\dots,v_k) \in H_\pi^k:\ \Phi^\pi_{v_1,\dots,v_k} \in O\}.\]
Imagining that $O$ is a small neighbourhood of a particular map $\phi$, this is the analog of a set of `good models' for a given shift-invariant measure in sofic entropy theory (see~\cite{Bowen--survey})

Consider also a sequence $\bspi = (\pi_n)_{n\ge 1}$ of representations of $\A$ whose dimensions $d_n$ are finite but diverge.  We refer to it as an \textbf{almost periodic sequence} for $\A$.  We define the \textbf{almost periodic entropy} of an $\rmM_k$-valued completely positive map $\phi$ along $\bspi$ to be
\[\rmh_{\bspi}(\phi) := \inf_O \limsup_{i\to \infty}\frac{1}{d_i}\log \frac{\vol_{2kd_i}\X(\pi_i,O)}{v(d_i)^k},\]
where $O$ ranges over neighbourhoods of $\phi$, $\vol_{2kd_i}$ refers to Lebesgue measure in $2kd_i$ real dimensions, and $v(d_i)$ is the volume of the unit ball in $\bbC^{d_i}$.  This is inspired by the definition of sofic entropy for a finite-valued stationary process over a sofic group.

We introduce this new notion of entropy and develop its basic properties in Section~\ref{sec:AP}.  Many of these resemble properties of sofic entropy, but some differences emerge.  For example, almost periodic entropy is not an invariant of unitary equivalence of representations, but it does satisfy a transformation formula if one changes cyclic vector within a fixed representation.

Our main result in this section is a formula for $\rmh_{\bspi}(\phi)$ as a Fuglede--Kadison determinant.  It holds whenever $\phi$ is `asymptotically associated' to $\bspi$ and the pulled-back traces $d_i^{-1}\Tr_{d_i}\circ \pi_i$ converge to a limiting tracial state $\tau$ of $\A$.  `Asymptotic association' means that, for every neighbourhood $O$ of $\phi$, the set $\X(\pi_i,O)$ is nonempty for infinitely many $i$; if this fails then $\rmh_{\bspi}(\phi)$ is simply forced to be $-\infty$.

Let $\Delta$ be the Fuglede--Kadison determinant defined from $\tau$, and let $\phi_\rm{ac} + \phi_\rm{sing}$ be the Lebesgue decomposition of $\phi$ relative to $\tau$ (see Subsection~\ref{subs:cp-maps-compar}).

\begin{mainthm}\label{mainthm:det-form}
Suppose that $d_n^{-1}\Tr_{d_n}\circ \pi_n \to \tau$ and that $\phi$ is asymptotically associated to $\bspi$. Then
\[\rmh_{\bspi}(\phi) = \log \Delta \phi_{\rm{ac}}.\]
\end{mainthm}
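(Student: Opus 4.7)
The plan is to establish the formula in three stages: first reduce to the case where $\phi$ is absolutely continuous with respect to $\tau$; second, compute the volume of good models via a polar-type change of variables; and third, identify the resulting limit as the Fuglede--Kadison determinant. Matching upper and lower bounds will be treated in parallel throughout.

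For the reduction, I would show $\rmh_{\bspi}(\phi) = \rmh_{\bspi}(\phi_{\rm{ac}})$. Via the Lebesgue decomposition of Corollary~\ref{cor:Leb}, the singular part $\phi_{\rm{sing}}$ is supported on spectral directions of $\pi_\tau(\A)''$ that are $\tau$-null. The trace convergence $d_n^{-1}\Tr_{d_n}\circ \pi_n \to \tau$ then forces the corresponding subspaces of $H_{\pi_n}$ to have dimension $o(d_n)$. A good model for $\phi$ thus splits (approximately) as a good model for $\phi_{\rm{ac}}$ in the bulk complement, orthogonal to a small good model for $\phi_{\rm{sing}}$ in these low-dimensional subspaces. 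The latter contributes volume only $\exp(o(d_n))$ and disappears in $\rmh_{\bspi}$, while the former reproduces the entropy of $\phi_{\rm{ac}}$.

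For the main computation, view a tuple $W = (v_1,\ldots,v_k) \in H_{\pi_n}^{(k)}$ as a $d_n \times k$ complex matrix and use the polar decomposition $W = U|W|$ with $U^*U = I_k$ and $|W| = (W^*W)^{1/2} \in \rmM_k$. The Lebesgue volume element factors as
\[ dW = c_k \, \Det(W^*W)^{d_n - k} \, d(W^*W) \, d\sigma(U), \]
where $\sigma$ is the natural measure on the complex Stiefel manifold of isometries $\bbC^k \to \bbC^{d_n}$. The type factors as $\Phi^{\pi_n}_W(a) = |W|(U^*\pi_n(a)U)|W|$, so the constraint $\Phi^{\pi_n}_W \in O$ pins $W^*W$ close to $\phi_{\rm{ac}}(1)$ and restricts $U$ to the set where $U^*\pi_n(\cdot)U$ is close to the unital CP map $\psi := \phi_{\rm{ac}}(1)^{-1/2}\phi_{\rm{ac}}(\cdot)\phi_{\rm{ac}}(1)^{-1/2}$. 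Using the compositional structure $\log \Delta_\tau \phi_{\rm{ac}} = \log \Det \phi_{\rm{ac}}(1) + \log \Delta_\tau \psi$ that any reasonable notion of Fuglede--Kadison determinant should satisfy under polar factorization, the Jacobian $\Det(W^*W)^{d_n-k}$ yields $\log \Det \phi_{\rm{ac}}(1)$ in the $d_n^{-1}\log$ limit, while the normalized Stiefel-slice measure of $\{U: U^*\pi_n(\cdot)U \approx \psi(\cdot)\}$ yields $\log \Delta_\tau \psi$ via the Radon--Nikodym derivative of Definition~\ref{dfn:FK-det-ac}. The normalizing constant $c(k,d_n)$ absorbs $c_k$ together with the Stiefel volume to leave exactly $\log \Delta_\tau \phi_{\rm{ac}}$.

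The identification of the Stiefel-slice limit with $\log \Delta_\tau \psi$ is the technical heart of the argument and where the Radon--Nikodym derivative enters explicitly. The main obstacle I anticipate is the lack of continuity of $\Delta_\tau$: it is only lower semicontinuous on positive operators, so the upper bound does not follow by direct passage to the limit. I would manage this via spectral truncation of the Radon--Nikodym derivative, bounding it away from $0$ and $\infty$, proving the formula in that regime where continuity applies, and then relaxing the truncation with exponential-scale error $o(d_n)$. Transferring the truncation from $\pi_\tau(\A)''$ back to the finite-dimensional $\pi_n(\A)$ using spectral-projection estimates is the most delicate step, and will likely rely on concentration of measure on the unitary group $\U(\rmM_{d_n})$ together with the convergence $d_n^{-1}\Tr_{d_n}\circ \pi_n \to \tau$ to guarantee that Haar-typical isometries $U$ achieve the Stiefel-slice prediction.
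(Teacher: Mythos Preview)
Your overall architecture—reduce to the absolutely continuous case, then compute via a polar-type change of variables—matches the paper's, and your use of Proposition~\ref{prop:int-form} for the polar factorization is exactly right. But there is a genuine gap in the reduction step, and the core computation is left too vague in a way that hides the main idea.

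For the reduction, your claim that the singular part ``contributes volume only $\exp(o(d_n))$'' is not enough for the \emph{upper} bound. The typical set $\X(\pi_n,O)$ is contained in a Minkowski sum $\X(\pi_n,U) + \X(\pi_n,W)$ where $U$ is a neighbourhood of $\phi_{\rm{sing}}$ and $W$ of $\phi_{\rm{ac}}$ (this is Corollary~\ref{cor:sums}(b)). Bounding the volume of a Minkowski sum requires control of the \emph{covering numbers} of one summand, not just its volume. The paper achieves this via Lemma~\ref{lem:pre-reduce-to-ac}: one approximates the central projection separating $\pi_{\phi_{\rm{sing}}}$ from $\pi_\tau$ by an element $a\in\A$ using Kaplansky density, and then the spectral decomposition of $\pi_n(a)$ traps $\X(\pi_n,U)$ near a subspace whose dimension is a \emph{small proportion} of $d_n$ (not $o(d_n)$, and not canonically defined by $\tau$). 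This yields an $\eps$-net of size $e^{\eps d_n}$ for arbitrarily small $\eps$. Your phrasing ``corresponding subspaces of $H_{\pi_n}$ have dimension $o(d_n)$'' suggests you are imagining a canonical singular subspace, but no such thing exists in $H_{\pi_n}$; the construction has to go through an approximating element of $\A$.

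For the core computation, the paper's mechanism is different from the ``Stiefel-slice measure tends to $\log\Delta_\tau\psi$'' picture you sketch. Instead, one approximates the Radon--Nikodym operator $T$ of Theorem~\ref{thm:RadNik} by a positive invertible $a\in\A$ with $\tau(\log a)$ close to $\log\Delta_\tau T$ (Proposition~\ref{prop:Kap+}), and then uses the change-of-variables estimate $\vol_{2d}\X(\pi_n,O) \gtrless (\Det\pi_n(a))^{\pm 2}\cdot\vol_{2d}\X(\pi_n,O')$ where $O'$ is a neighbourhood of $\tau$ itself (Lemma~\ref{lem:vol-compare}). The determinant becomes $\exp(2d_n(\tr_{d_n}\circ\pi_n)(\log a)) \to \exp(2d_n\tau(\log a))$ by trace convergence, and the volume of $\X(\pi_n,O')$ is controlled by concentration (Corollary~\ref{cor:conc0}). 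Your spectral-truncation idea is morally the same as Proposition~\ref{prop:Kap+}, but you never say what replaces the Stiefel-slice computation once you have truncated—the answer is this linear change-of-variables back to a neighbourhood of $\tau$, and that is the step doing the real work.
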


Let us emphasize two features of Theorem~\ref{mainthm:det-form} that are substantially different from Theorems~\ref{mainthm:amenable} and~\ref{mainthm:past}:
\begin{itemize}
\item Any tracial positive functional $\tau$ on $\A$ may appear in Theorem~\ref{mainthm:det-form}, provided it can arise as a suitable limit of normalized finite-dimensional traces.  By contrast, Theorems~\ref{mainthm:amenable} and~\ref{mainthm:past} refer specifically to the regular character on a group $\G$ and its associated tracial state on $C^\ast \G$.
\item Theorem~\ref{mainthm:det-form} does not make any assumption on $\A$ itself that corresponds to soficity of a group.  However, such an assumption is implicit in the hypothesis that $\tau$ is a limit of normalized finite-dimensional traces.  By applying this theorem to $C^\ast \G$ when $\G$ is a free group and $\tau$ is lifted from a quotient group of $\G$, one recovers a theorem for precisely R\u{a}dulescu's class of hyperlinear groups from~\cite{Rad08}: see Subsection~\ref{subs:AP}.
\end{itemize}

In comparison with Szeg\H{o}'s theorem, Theorem~\ref{mainthm:det-form} has the interesting new feature that $\rmh_{\bspi}(\phi)$ may equal $-\infty$ if $\phi$ is not asymptotically associated to $\bspi$, even though $\log \Delta \phi_{\rm{ac}}$ may still be finite in that case.  If $\A = C^\ast \G$ and $\tau$ is the state given by the regular character of $\G$, then this is possible only if $\G$ is non-amenable, and reflects basic features of the representation theory of non-amenable groups.

Subsection~\ref{subs:C-cors} derives various consequences of Theorem~\ref{mainthm:det-form}, for example concerning different possible modes of convergence for the sequence $\bspi$ itself.

As far as I know, the nearest precursors to Theorem~\ref{mainthm:det-form} in the literature are formulas for the sofic entropy of certain special measure-preserving systems in~\cite{Lyons05,Lyons10} and especially~\cite{Hayes16,Hayes21}.  We compare our work with these in Subsection~\ref{subs:C-rmks}.

\subsection*{Acknowledgements}

This work depended on insightful conversations and correspondence with many people.  In this regard I am particularly grateful to Nir Avni, Uri Bader, David Blecher, Lewis Bowen, Peter Burton, Amir Dembo, Louis Labuschagne, Michael Magee, Magdalena Musat, Narutaka Ozawa, Sorin Popa, Mikael R\o rdam, Brandon Seward and Dimitri Shlyakhtenko.

For the purpose of open access, the author has applied a Creative Commons Attribution (CC-BY) licence to any Author Accepted Manuscript version arising from this submission.

\section{Operator algebras, C*-algebras and representations}\label{sec:op-alg}

This section recalls the background we need from linear algebra and the theory of C*-algebras.  I assume functional analysis at about the level of~\cite{ReeSimFA}. Beyond that, I state a number of standard results for later reference, and prove a few that are not easily found in the literature.

\subsection{Linear algebra}\label{sec:lin-alg}

We write $\bbC^{\oplus k}$ for the space of complex height-$k$ column vectors.  More generally, if $S$ is a set, possibly infinite, and $H$ is a Hilbert space, then we write$H^{\oplus S}$ for the Hilbert-space direct sum of an $S$-indexed family of copies of $H$, still regarded as a space of column vectors. This insistence on column vectors is slightly unusual in functional analysis, but for finite $k$ it enables us to use matrix-vector notation from linear algebra in places where it simplifies the exposition.

We write $\rmM_{n,k}$ for the space of $n$-by-$k$ matrices over the complex numbers, and identify these with linear maps from $\bbC^{\oplus k}$ to $\bbC^{\oplus n}$ using matrix-vector multiplication.  By writing such a matrix as $[v_1,\dots,v_k]$, where $v_1$, \dots, $v_k$ are its columns, we can identify it with a $k$-tuple of vectors in $\bbC^{\oplus n}$.  We generalize this notation further by allowing columns from any vector space $H$, so a linear map $V$ from $\bbC^{\oplus k}$ to $H$ may still be written in the form $[v_1,\dots,v_k]$.  We sometimes abuse notation by calling $V$ itself a `$k$-tuple of vectors in $H$'.  If $H$ is a Hilbert space, then the adjoint $V^\ast$ is the map from $H$ to $\bbC^{\oplus k}$ whose output coordinates are given by the inner products with the vectors $v_i$.

We abbreviate $\rmM_{k,k}$ to $\rmM_k$ and regard it as a $\ast$-algebra over $\bbC$ in the usual way.  We write $I_k$ for the $k$-by-$k$ identity matrix. We write $\Tr_k$ and $\Det$ for the usual trace and determinant on any such algebra, and we set $\tr_k := k^{-1}\Tr_k$.

We write $\rmM_{k+}$ for the closed cone of positive semi-definite elements of $\rmM_k$.  It defines the positive definite ordering on self-adjoint matrices. If $Q \in \rmM_{k+}$, then its determinant and trace are related by the inequality
\begin{equation}\label{eq:AMGM}
(\det Q)^{1/k}\le \tr_k Q.
\end{equation}
This is simply the inequality of arithmetic and geometric means applied to the eigenvalues of $Q$.

For a linear operator on an inner product space, or a matrix that can be regarded as such, the notation $\|\cdot\|$ means the operator norm.

If $P$ is an orthogonal projection in a Hilbert space $H$, then we use $P^\perp$ as a shorthand for $I-P$.

\subsection{C*-algebras, von Neumann algebras, and representations}\label{subs:basics}

Throughout this paper, $\A$ is a \emph{separable}, \emph{unital} C*-algebra and we study \emph{separable} representations, meaning that they act on separable complex Hilbert spaces.  We denote the unit of $\A$ by $1_\A$ or just $1$.  Our guiding examples are the group C*-algebras of countable groups: see Subsection~\ref{subs:group-alg} below.  We follow the common convention that C$^\ast$-algebras may exist in the abstract, but a von Neumann algebra is always a weak-operator closed $\ast$-subalgebra of $\frL(H)$ for some particular Hilbert space $H$.  In particular, we sometimes casually identify isomorphic C$^\ast$-algebras when this can cause no confusion, but two von Neumann algebras acting on different Hilbert spaces are not identified, even if they are isomorphic.

We usually denote a representation of $\A$ by a single letter such as $\pi$, and then write its Hilbert space as $H_\pi$ when necessary.  We use $\oplus$ to denote direct sums in the categories of Hilbert spaces or representations~\cite[Subsec. 2.2.3]{Dix--Cstar}, and we use $\otimes$ for tensor products of Hilbert spaces, operators on Hilbert spaces, or von Neumann algebras of such operators~\cite[Secs. I.2.3--4]{Dix--vN}.  We do not need the more involved theory of tensor products of abstract C*-algebras.  For a representation $\pi$ and any $k\in \bbN\cup\{\infty\}$, we write either $\pi^{\oplus k}$ or $\pi \otimes I_k$ for the direct sum of $k$ copies of $\pi$, and refer to it as the \textbf{$k$-fold inflation} of $\pi$.

If $\pi$ is a representation and $M$ is a closed $\pi$-invariant subspace of $H_\pi$, then we write $\pi^M$ for the associated subrepresentation of $\A$ on $M$.  If $\pi$ is a representation of $\A$, then a subset $S$ of $H_\pi$ is \textbf{cyclic} for $\pi$ if it is not contained in any proper closed invariant subspace of $H_\pi$, or equivalently if $\sum_{v \in S}\pi(\A)v $ is dense in $H_\pi$.

Given two representations $\pi$ and $\rho$, we write $\pi \simeq \rho$ if they are unitarily equivalent, $\pi \lesssim \rho$ if $\pi$ is contained in $\rho$, and $\pi\spoon\rho$ if they are disjoint; see~\cite[Secs. 2.2 and 5.2]{Dix--Cstar}, for example.

Now let $\kappa := \pi \oplus \rho$.  Regard $H_\pi$ and $H_\rho$ as subspaces of $H_\kappa$, and let $P$ be the orthogonal projection from $H_\kappa$ onto $H_\pi$.  The next result is~\cite[Prop. 5.2.4]{Dix--Cstar}.

\begin{lem}\label{lem:central-projection}
We have $\pi \spoon \rho$ if and only if $P$ lies in the centre of $\kappa(\A)''$. \qed
\end{lem}

A related result of Mackey breaks up two arbitrary representations into quasi-equivalent and disjoint pieces~\cite[Thm. 1.11]{Mac76}.  We need the following special case.

\begin{prop}\label{prop:Mac}
If $\pi$ and $\rho$ are representations, then $\pi$ has a unique closed invariant subspace $M$ such that $\pi^M \lesssim \rho^{\oplus \infty}$ and $\pi^{M^\perp} \spoon \rho$. \qed
\end{prop}

We call $\pi^M$ and $\pi^{M^\perp}$ the \textbf{$\rho$-normal} and \textbf{$\rho$-singular} parts of $\pi$, respectively.

\subsection{Types and completely positive maps}\label{subs:types}

Let $\pi$ be a representation of $\A$, let $v_1$, \dots, $v_k \in H_\pi$, and regard the tuple $V := [v_1,\dots,v_k]$ as a linear map from $\bbC^{\oplus k}$ to $H_\pi$.  To keep track of how these vectors move together under the action of $\pi$, we can consider the $\rmM_k$-valued map
\begin{equation}\label{eq:cp-assoc}
\Phi^\pi_V(a) := V^\ast \pi(a) V = [\langle \pi(a)v_j,v_i\rangle]_{i,j=1}^k \qquad (a \in \A).
\end{equation}
Notice that the order of the indices matches the convention for the Gram matrix of a tuple of vectors~\cite[Sec. 7.2]{HorJohMA}.  We sometimes adapt a term from information theory by calling $\Phi^\pi_V$ the \textbf{type} of the tuple $V$ in $\pi$ (compare~\cite[Sec. 11.1]{CovTho06}, for example).  We also sometimes write $\Phi^\pi_{v_1,\dots,v_k}$ instead of $\Phi^\pi_V$.

The map in~\eqref{eq:cp-assoc} is completely positive, and any completely positive map from $\A$ to $\rmM_k$ has this form for some $\pi$ and $V$ by Stinespring's theorem~\cite[Ex. 1.5.2 and Thm. 1.5.3]{BroOza08}.  If we require in addition that $V$ be cyclic for $\pi$, then the resulting pair $(\pi,V)$ is unique up to unitary equivalence.  In this case $\pi$ is the \textbf{minimal dilation} of $\phi$ and is denoted by $\pi_\phi$.  By the uniqueness of minimal dilations, $\phi$ is associated to $\pi$ if and only if $\pi_\phi \lesssim \pi$. 

When $k=1$, these facts reduce to the construction of the GNS representation and its distinguished cylic vector from a positive linear functional.  We write $\A^\ast_+$ for the space of such functionals on $\A$.  For general values of $k$, we write $\frL(\A,\rmM_k)$ for the space of all continuous linear maps from $\A$ to $\rmM_k$, and $\frL(\A,\rmM_k)_+$ for the subset of all completely positive ones.  If $\phi, \psi \in \frL(\A,\rmM_k)$, then we write $\phi \le \psi$ if the functional $\psi - \phi$ is completely positive.  This defines a partial order on $\frL(\A,\rmM_k)$ in which $\frL(\A,\rmM_k)_+$ is the non-negative cone.  We write $\S_k(\A)$ for the subset of all completely positive maps $\phi:\A \to \rmM_k$ that are \textbf{normalized}, meaning that $\tr_k \phi(1) = 1$.  In particular, $\S_1(\A)$ is the state space of $\A$~\cite[Chap. 2]{Dix--Cstar}.

The vector space $\frL(\A,\rmM_k)$ has a natural topology obtained by applying the weak$^\ast$ topology in each matrix entry. Henceforth we simply refer to this as `the weak$^\ast$ topology' of $\frL(\A,\rmM_k)$, and take it as the default topology on $\frL(\A,\rmM_k)$ or its subsets.  For any $k$, complete positivity is defined by a family of closed linear inequalities, so $\frL(\A,\rmM_k)_+$ is a weak$^\ast$-closed cone in $\frL(\A,\rmM_k)$.  The further subset $\S_k(\A)$ is compact by the Banach--Alaoglu theorem and also metrizable because $\A$ is separable.

\begin{lem}\label{lem:unif-cts}
	For any $\pi$ and $k$, the type map
	\[H_\pi^k \mapsto \frL(\A,\rmM_k):[v_1,\dots,v_k] \mapsto \Phi^\pi_{v_1,\dots,v_k}\]
	is continuous.
\end{lem}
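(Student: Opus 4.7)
The plan is to reduce the statement to a bound on individual matrix entries and then use a standard Cauchy--Schwarz argument on the resulting sesquilinear forms. Through the canonical isomorphism~\eqref{eq:pairing-iso}, or equivalently by direct inspection, the weak$\sp\ast$ topology on $\B(\A,\rmM_k)$ is the coarsest one for which each evaluation $\phi \mapsto \phi_{ij}(a)$ is continuous, as $a$ ranges over $\A$ and $(i,j)$ over pairs in $\{1,\dots,k\}^2$. Its canonical uniform structure is generated (on each bounded set) by finite families of such evaluations. Thus it suffices to show that, for each fixed $a\in \A$ and each pair $(i,j)$, the map
\[H_\pi^{(k)} \to \bbC:\ [v_1,\dots,v_k]^\rm{T} \mapsto \langle \pi(a)v_j,v_i\rangle\]
is continuous, and uniformly continuous on bounded subsets.

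For this I would use the standard telescoping identity
\[\langle \pi(a)v_j',v_i'\rangle - \langle \pi(a)v_j,v_i\rangle = \langle \pi(a)(v_j'-v_j),v_i'\rangle + \langle \pi(a)v_j,v_i'-v_i\rangle,\]
and then apply Cauchy--Schwarz together with the contractive bound $\|\pi(a)\|\le \|a\|$. This gives
\[\big|\langle \pi(a)v_j',v_i'\rangle - \langle \pi(a)v_j,v_i\rangle\big| \le \|a\|\cdot \big(\|v_i'\|\,\|v_j'-v_j\| + \|v_j\|\,\|v_i'-v_i\|\big).\]
If $V,V'$ lie in a bounded ball $\|V\|,\|V'\|\le R$ of $H_\pi^{(k)}$, then the right-hand side is bounded by $2R\|a\|\cdot \|V'-V\|$, which yields the claimed uniform continuity on bounded subsets.

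Finally, ordinary (pointwise) continuity is obtained from this: given any $V_0 \in H_\pi^{(k)}$ and any weak$\sp\ast$ neighbourhood $N$ of $\Phi^\pi_{V_0}$, choose $R := \|V_0\| + 1$ and apply uniform continuity on the ball of radius $R$ around the origin to produce a $\delta > 0$ whose corresponding $\delta$-neighbourhood of $V_0$ is mapped into $N$. The only subtlety in the whole argument is bookkeeping between the matrix-valued weak$\sp\ast$ topology and the coordinate-wise estimates above; no step is really an obstacle, since the completely positive map is built out of inner products that depend sesquilinearly and continuously on $V$.
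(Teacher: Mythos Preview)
Your proof is correct and follows essentially the same approach as the paper: reduce to individual matrix entries $\langle \pi(a)v_j,v_i\rangle$ for fixed $a$, $i$, $j$, and then use the elementary continuity and uniform-on-bounded-sets continuity of the inner product map $H_\pi\times H_\pi\to\bbC$. The paper's proof is simply a two-sentence sketch of exactly this reduction.
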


\begin{proof}
	This is elementary for the inner product map $H_\pi\times H_\pi\to \bbC$, and then follows for types by arguing pointwise for $i$, $j \in \{1,\dots,k\}$, and ${a \in \A}$.
\end{proof}

Now consider again a representation $\pi$ and a tuple $v_1$, \dots, $v_k$ in $H_\pi$.  If $a = [a_{ij}]$ is an $\ell$-by-$k$ matrix of elements of $\A$, then we can define a new $\ell$-tuple in $H_\pi$ by the formula
\begin{equation}\label{eq:matrix-vector}
\left[\begin{array}{c}y_1 \\ \vdots \\ y_\ell\end{array}\right] := [\pi(a_{ij})]\cdot\left[\begin{array}{c}v_1\\ \vdots \\ v_k\end{array}\right],
\end{equation}
understood by following the rules of matrix-vector multiplication.  For example, if $a = [q_{ij}\cdot 1]$ for some scalar matrix $Q = [q_{ij}]$, then we can identify $[\pi(a_{ij})]$ with $I_{H_\pi}\otimes Q$, and~\eqref{eq:matrix-vector} becomes
\begin{equation}\label{eq:y-Q-v}
[y_1,\dots,y_\ell]^\rm{T} := (I_{H_\pi}\otimes Q)[v_1,\dots,v_k]^\rm{T}.
\end{equation}

If the tuples $v_1$, \dots, $v_k$ and $y_1$, \dots, $y_\ell$ are related as in~\eqref{eq:matrix-vector}, and $\phi$ and $\psi$ are their respective types, then $\psi$ may be written using the tuple $v_1$, \dots, $v_k$ like this:
\[\psi_{ii'}(b) = \langle \pi(b)y_{i'},y_i\rangle = \sum_{j,j'=1}^k \langle \pi(ba_{i'j'})v_{j'},\pi(a_{ij})v_j\rangle \qquad (b \in \A,\ 1\le i,i'\le k).\]
Writing this right-hand side in terms of $\phi$ itself yields the following.

\begin{lem}\label{lem:new-type}
In the situation above, we have
\begin{equation}\label{eq:new-type}
\psi_{ii'}(b) = \sum_{j,j'=1}^k \phi_{jj'}((a_{ij})^\ast ba_{i'j'}) \qquad (b \in \A,\ 1\le i,i'\le k).
\end{equation}
As a result, with $[a_{ij}]$ held fixed, $\psi$ is continuous as a function of $\phi$ for the weak$^\ast$ topologies.

In particular, in the special case of~\eqref{eq:y-Q-v}, we have
	\begin{equation}\label{eq:psi-Q-phi}
\psi(b) := (Q^\rm{T})^\ast\phi(b)Q^\rm{T} \qquad (b \in \A).
	\end{equation}
\qed
\end{lem}

\subsection{Tracial functionals and determinants}\label{subs:tracial}

A special role is played by positive functionals $\tau$ on $\A$ that are \textbf{tracial}, meaning they satisfy the \textbf{trace identity}: $\tau(ab) = \tau(ba)$ for all $a,b \in \A$. Fix such a functional $\tau$, let its GNS representation be $\l$ with associating vector $\xi$, and let $H := H_\l$.  Then we also refer to the vector $\xi$ as \textbf{tracial}.

A tracial functional gives rise to a rich additional structure on the von Neumann algebra $\N := \l(\A)''$.  This is introduced in~\cite[Chap. I.5]{Dix--vN}, which uses the language of Hilbert algebras, and~\cite[Chap. I.6]{Dix--vN}, which explains the equivalence with tracial functionals.  A more modern account is~\cite[App. F]{BroOza08}.  The vector $\xi$ defines the normal positive functional $\langle (\cdot)\xi,\xi\rangle$ on $\frL(H)$, and the restriction of this functional to either $\N$ or $\N'$ still satisfies the trace identity.  We often write $\t{\tau}$ for either of these restrictions.  When $\t{\tau}$ is the restriction to $\N$, it satisfies $\tau = \t{\tau}\circ \l$.

In addition, $\xi$ turns out to be both cyclic and separating for both $\N$ and $\N'$~\cite[Cor. I.6.1 and Prop. I.1.5]{Dix--vN}.  Using $\xi$, the dense subspace $\N\xi$ has a well-defined involution given by
\begin{equation}\label{eq:canonical-involution}
	A\xi \mapsto A^\ast \xi \qquad (A \in \N).
\end{equation}
This map anti-linear, it fixes $\xi$, and it converts inner products to their conjugates as a consequence of the trace identity.  It is therefore an isometry, and extends by continuity to an involution of the whole of $H$ that has the same properties.  This is the \textbf{canonical involution} $J$ associated to $\N$ and $\xi$.  Finally, the map $A\mapsto JAJ$ is an involutive $\ast$-anti-automorphism of $\frL(H)$, it acts by complex conjugation on the functional $\langle(\cdot)\xi,\xi\rangle$, and it exchanges the subalgebras $\N$ and $\N'$ of $\frL(H)$.

In the context above, some constructions require operators that are closed, densely-defined, and affiliated to $\N$ or $\N'$, but possibly unbounded.  See~\cite[Chap. VIII]{ReeSimFA} for background on unbounded operators, and~\cite[Exers. I.1.10 and III.1.13]{Dix--vN} or~\cite[App. F]{BroOza08} for the definition and some basic properties of affiliated operators.  Such operators are the basic elements of `noncommutative integration theory': see~\cite{Seg53,Seg53b} for an early account.  They could also be identified with elements of abstract noncommutative Lebesgue spaces~\cite{PisXuNCLP}, but our uses below depend on them being operators on Hilbert spaces. 

Our specific need is for the following subclass. Let $T$ be affiliated to $\N$, and let $E$ be the spectral resolution of $|T|$ on $[0,\infty)$.  Following~\cite[Subsec. 3.4]{Seg53} and~\cite[App. F]{BroOza08}, we call $T$ \textbf{square-integrable} with respect to $\t{\tau}$ if
\begin{equation}\label{eq:sq-int}
\int_{[0,\infty)}t^2\ \t{\tau} E(dt) < \infty.
\end{equation}
If $\t{\tau}$ is the restriction of $\langle (\cdot)\xi,\xi\rangle$ to $\N$, then square-integrability is equivalent to $\xi \in \dom T$, and then the left-hand side of~\eqref{eq:sq-int} is equal to $\|T\xi\|^2$.

Any normal tracial positive functional $\t{\tau}$ on a von Neumann algebra $\N$ defines an associated \textbf{Fuglede--Kadison determinant}: see~\cite[Sec. I.6.11]{Dix--vN}.  We denote it by $\Delta_{\t{\tau}}$, or just $\Delta$ if the choice of $\t{\tau}$ is clear.

If $T$ is a square-integrable operator affiliated to $\N$, then we may extend the definitions of trace and determinant by setting
\begin{equation}\label{eq:FK-det}
\t{\tau}(T) := \int_{[0,\infty)}t\ \t{\tau}E(dt) \qquad \hbox{and}\qquad \Delta T := \exp \int_{[0,\infty)} \log t\ \t{\tau} E(dt).
\end{equation}
Both integrands are dominated by $t^2+1$, so $\t{\tau}(T)$ is finite and $\Delta T$ is either positive or equal to $\exp(-\infty) = 0$.  These definitions can be extended to even larger classes of operators, but we do not need those here; see~\cite[Sec. 2]{HaaSch07} for the determinant in the full generality of `log-integrable' operators.  If $\t{\tau}$ is a state, then $\t{\tau} E$ is a probability measure, and we can apply Jensen's inequality~\cite[Thm. 3.3]{Rudin--realcplx} to the two integrals in~\eqref{eq:FK-det} to obtain an extension of the determinant-trace inequality~\eqref{eq:AMGM}:
\begin{equation}\label{eq:AMGM2}
\Delta T\le \t{\tau}(T).
\end{equation}

Now suppose that $T$ is non-negative, square-integrable, and affiliated to $\N$, and let $E$ be its spectral resolution on $[0,\infty)$.  For any $\delta \in (0,1)$, let 
	\[T_\delta := (T\vee \delta)\wedge \delta^{-1} :=  \int_{[0,\infty)} (t\vee \delta)\wedge \delta^{-1}\ E(dt),\]
	where `$\vee$' stands for `$\max$' and `$\wedge$' stands for `$\min$'. Then $T_\delta$ is an element of $\N$ satisfying $\delta \le T_\delta \le \delta^{-1}$.  Let $E_+ := E(0,\infty) = 1 - E\{0\}$.

\begin{lem}\label{lem:Kap+}
 As $\delta \downarrow 0$, these operators satisfy
	\begin{enumerate}
	\item[i.] $T_\delta x \to Tx$ and $T_\delta^{-1}T x \to E_+x$ for every $x \in \dom T$;
	\item[ii.] $\Delta T_\delta \to \Delta T$.
	\end{enumerate}
\end{lem}

\begin{proof}
Let $x \in \dom T$. Then the spectral theorem gives
\begin{align*}
	\|Tx - T_\delta x\|^2 &= \int_{[0,\infty)} |t - (t\vee \delta)\wedge \delta^{-1}|^2\ \langle E(dt)x,x\rangle,\\
	\|E_+x - T_\delta^{-1}Tx\|^2 &= \int_{(0,\infty)} \Big|1 - \frac{t}{(t\vee \delta)\wedge \delta^{-1}}\Big|^2\ \langle E(dt)x,x\rangle\\
	\hbox{and} \qquad \log \Delta T_\delta &= \int_{[0,\infty)} \log((\delta \vee t)\wedge \delta^{-1})\ \t{\tau} E(dt)
\end{align*}
	Both of the expressions $\langle E(\cdot)x,x\rangle$ and $\t{\tau}E$ are finite Borel measures on $\bbR$, and the function $t^2+1$ is integrable with respect to both of them by our assumptions on $T$.  Therefore the dominated convergence theorem applies to the first two integrals above as $\delta \downarrow 0$, showing that they both converge to $0$.  This proves conclusion (i).  The dominated convergence theorem also applies to the positive part of the third integral, while the negative part is subject to the monotone convergence theorem.  It follows that the third integral converges to $\log \Delta T$ (even if this equals $-\infty$), and then exponentiating gives conclusion (ii).
\end{proof}

If $\tau$ is a tracial positive functional on a C*-algebra $\A$ and we construct $\l$, $\xi$ and $\t{\tau}$ on $\l(\A)''$ from it as above, and if $a$ is a non-negative element of $\A$, then we write $\Delta a$ for $\Delta \l(a)$.  If $a$ is also invertible, then $\Delta a$ is equal to $\exp(\tau(\log a))$.

\subsection{Group algebras and positive definite functions}\label{subs:group-alg}

Let $\G$ be a countable discrete group with identity element $e$. We write $\bbC[\G]$ for the complex group algebra of $\G$, and regard it concretely as the space of finitely supported functions from $\G$ to $\bbC$; see~\cite[Chap. 13]{Dix--Cstar}, which also allows general locally compact groups.  Given $g \in \G$, we write $\delta_g$ for its canonical image in $\bbC[\G]$, so the unit of $\bbC[\G]$ is $\delta_e$.

Similarly, we identity $\rmM_k(\bbC[\G])$ with the vector space $\rmM_k[\G]$ of finitely supported maps from $\G$ to $\rmM_k$.  Given $\phi,\psi:\G\to \rmM_k$, at least one of them finitely supported, we extend the usual definition of convolution by writing
\begin{equation}\label{eq:convo}
(\phi\ast \psi)(g) := \sum_{h,k:\ hk=g}\phi(h)\psi(k) = \sum_h \phi(h)\psi(h^{-1}g) \qquad (g \in \G).
\end{equation}
The individual summands here are matrix products.  If both $\phi$ and $\psi$ are finitely supported then so is $\phi\ast \psi$, and then~\eqref{eq:convo} defines the structure of $\rmM_k[\G]$ as a group algebra with matrix coefficients.

The group C*-algebra $C^\ast \G$ is the maximal C*-completion of $\bbC[\G]$.  Representations of $C^\ast \G$ are in one-to-one correspondence with unitary representations of $\G$ itself, and we generally use the same notation for a representation of $C^\ast\G$ and for its restriction to $\G$; see~\cite[Sec. 13.9]{Dix--Cstar}.  Throughout the rest of this paper, a `representation' of $\G$ always means a unitary representation. For each $g \in \G$, we continue to write $\delta_g$ for its image in $C^\ast \G$.  We can identify $\rmM_k(C^\ast \G)$ with the corresponding completion of $\rmM_k[\G]$ (see~\cite[Lem. 10]{Oza13}).

Now consider a completely positive map $\phi:C^\ast \G\to \rmM_k$. Identifying each group element $g$ with its image $\delta_g$, we can restrict $\phi$ to an $\rmM_k$-valued map on $\G$ itself, which we usually continue to denote by $\phi$.  The maps on $\G$ that arise this way are positive definite: see~\cite[Sec. 13.4]{Dix--Cstar} for the case $k=1$ or~\cite[App. D]{BroOza08} for the general case.  On the other hand, another variant of the GNS construction shows that any $\rmM_k$-valued positive definite function on $\G$ is associated to some unitary representation, and therefore extends to a completely positive map on $C^\ast\G$.  The proof in the matrix-valued case can be modeled directly on the scalar case; this idea essentially goes back Naimark's work~\cite{Nai43} on Abelian groups.  So Naimark's result is an older cousin of Stinespring's theorem, although not quite a special case of it; see~\cite[Thm. 12 and Cor. 13]{Oza13} for a unified treatment. 

Under this bijection between the set of $\rmM_k$-valued positive definite maps on $\G$ and the space $\frL(C^\ast \G,\rmM_k)_+$, the weak$^\ast$ topology on $\frL(C^\ast \G,\rmM_k)_+$ corresponds to the usual weak$^\ast$ topology restricted from $\ell^\infty(\G;\rmM_k)$, and when restricted further to any uniformly bounded subset it coincides with the topology of pointwise convergence.

Let $\phi:\G\to\rmM_k$ be positive definite, let $a \in \rmM_k[\G]$, and regard $a$ as an element of $\rmM_k(C^\ast \G)$.  Having defined convolution in the generality of~\eqref{eq:convo}, we may use it to express the pairing from~\eqref{eq:pairing} like this:
\begin{equation}\label{eq:matrix-pos-def-convo}
\langle \phi,a\rangle = \frac{1}{k}\sum_{ij}\sum_g a_{ij}(g)\phi_{ij}(g) = \sum_g\tr_k (\phi^\rm{t}(g^{-1}) a(g)) = \tr_k((\phi^\rm{t}\ast a)(e)),
\end{equation}
where we define
\begin{equation}\label{eq:phi-hatphi}
\phi^\rm{t}(g) := \phi(g^{-1})^\rm{T} \qquad (g \in \G)
\end{equation}
(that is, we apply inversion in $\G$ and transpose to elements of $\rmM_k$).

Finally, a state $\tau$ on $C^\ast \G$ is tracial if and only if its restriction to $\G$ is central.  In this case that restriction is called a \textbf{character} of $\G$.  For example, the function $1_{\{e\}}$ is the \textbf{regular character}.  It is associated to the left regular representation on $\ell^2(\G)$ by the function $\delta_e$.  More generally, if $H$ is a subgroup of $\G$, then the function $1_H$ is positive definite.  It is associated to the quasi-regular representation of $\G$ on $\ell^2(\G/H)$ by the function $\delta_{eH}$, and it is a character if and only if $H$ is normal in $G$.

\subsection{Algebras of matrices and the pairing isomorphism}

If $\A$ is a C$^\ast$-algebra and $k$ is a positive integer, then we write $\rmM_k(\A)$ for the algebra of $k$-by-$k$ matrices with entries from $\A$. The algebra operations combine those of $\A$ with the usual rules for matrices, and we define an involution on $\rmM_k(\A)$ by transposing and applying the involution of $\A$ entry-wise.  Then $\rmM_k(\A)$ has a natural identification with the algebraic tensor product of $\A$ and $\rmM_k$.

If $\pi$ is a representation of $\A$ and $k$ is a positive integer, then we define a representation $\pi^{(k)}$ of $\rmM_k(\A)$ on $H_\pi^{\oplus k}$ by setting $\pi^{(k)}([a_{ij}]) := [\pi(a_{ij})]$ and following the rules of matrix-vector multiplication as in~\eqref{eq:matrix-vector}.  From another point of view, we can identify $H_\pi^{\oplus k}$ with $H_\pi\otimes \bbC^{\oplus k}$, and then $\pi^{(k)}$ is the Kronecker product of $\pi$ with the canonical representation of $\rmM_k$ on $\bbC^{\oplus k}$.  Every representation of $\rmM_k(\A)$ is equivalent to one of this form: this can be seen by using the canonical copy of $\rmM_k$ inside $\rmM_k(\A)$ to break up a general representation into $k$ orthogonal subspaces with partial isometries between them (compare~\cite[Subs. 9.2.2]{Dix--Cstar}).  We make $\rmM_k(\A)$ into a C*-algebra in a canonical way by pulling back the operator norm through $\pi^{(k)}$ for any faithful representation $\pi$.

Elements of $\pi^{(k)}(\rmM_k(\A))''$ are naturally represented by elements of $\rmM_k(\pi(\A)'')$, and their commutant is given by
\begin{equation}\label{eq:pi-k-commutant}
\pi^{(k)}(\rmM_k(\A))' = \{T^{\oplus k}:\ T \in \pi(\A)'\}
\end{equation}
(see~\cite[Prop. I.2.4(iii)]{Dix--vN}).

We define the \textbf{pairing} of elements $a \in \rmM_k(\A)$ and $\phi \in \frL(\A,\rmM_k)$ by
\begin{equation}\label{eq:pairing}
	\langle \phi,a\rangle := \frac{1}{k}\sum_{ij}\phi_{ij}(a_{ij}).
\end{equation}
The map $\phi\mapsto \langle \phi,\cdot\rangle$ is the \textbf{pairing isomorphism}.  It identifies $\frL(\A,\rmM_k)$ with $\rmM_k(\A)^\ast$.  We henceforth regard either space as carrying the weak$^\ast$ topology.

The pairing isomorphism restricts to a bijection between the closed cones $\frL(\A,\rmM_k)_+$ and $\rmM_k(\A)_+^\ast$: see~\cite[Prop. 1.5.14]{BroOza08}, although their pairing differs from ours by a factor of $k$.  Applied to the type of a tuple of vectors, the pairing isomorphism has the following effect.

\begin{lem}\label{lem:dilation-matrix-dilation}
	If $\phi$ is associated to $\pi$ by the cyclic tuple $v_1,\dots,v_k\in H_\pi$, then $\langle \phi,\cdot\rangle$ is associated to $\pi^{(k)}$ by the cyclic vector $k^{-1/2}[v_1,\dots,v_k]^\rm{T}$. In particular, $\pi_{\langle \phi,\cdot\rangle}$ is equivalent to $\pi_\phi^{(k)}$.
	\qed
\end{lem}

In this lemma, the factor of $k^{-1/2}$ has the effect that an orthonormal tuple gives rise to a unit vector.

Because of positivity, the restriction of the dual norm satisfies
\begin{equation}\label{eq:norm-cts}
\|\langle \phi,\cdot\rangle\|_{\rmM_k(\A)^\ast} = \langle \phi,1\otimes I_k\rangle = \tr_k\phi(1) \qquad (\phi \in \frL(\A,\rmM_k)_+)
\end{equation}
(see~\cite[Prop. 2.1.4]{Dix--Cstar}).  It follows that the pairing isomorphism identifies $\S_k(\A)$ with $\S_1(\rmM_k(\A))$.  The identity~\eqref{eq:norm-cts} has another consequence in the next lemma, for which I have not found a reference.

\begin{lem}\label{lem:lcsc}
The restriction of the weak$^\ast$ topology to $\frL(\A,\rmM_k)_+$ is locally compact and second countable.
\end{lem}

\begin{proof}
By considering pairing functionals on $\rmM_k(\A)$ as in~\eqref{eq:pairing} instead of positive definite maps on $\A$, we may reduce to the case $k=1$.  Having done so, let
\[U_r := \{\phi \in \A^\ast_+:\ \phi(1) < r\} \qquad (r > 0).\]
Each of these sets is relatively weak$^\ast$-open in $\A^\ast_+$.  On the other hand, by~\eqref{eq:norm-cts}, the weak$^\ast$-closure $\ol{U_r}$ is equal to the intersection of $\A^\ast_+$ with a closed ball of radius $r$ in $\A^\ast$.  Therefore $\ol{U_r}$ is metrizable and also compact by the Banach--Alaoglu theorem, and so it is second countable.  This shows that $\A^\ast_+$ is covered by the sequence $U_1$, $U_2$, \dots  of open subsets, each of which is precompact and second countable in the weak$^\ast$ topology. 
\end{proof}

Via the pairing isomorphism, many facts about $\rmM_k$-valued completely positive maps on $\A$ can be reduced to the scalar-valued case for the algebra $\rmM_k(\A)$.  Two standard examples can be found as~\cite[Cor. 1.5.16 and Prop. 1.7.1]{BroOza08}.  We meet some more in the next subsection. 

\subsection{Comparing and decomposing completely positive maps}\label{subs:cp-maps-compar}

Two completely positive maps $\phi$ and $\psi$ are \textbf{disjoint} if $\pi_\phi \perp \pi_\psi$.

Now let $\rho$ be a representation of $\A$, let $\phi \in \frL(\A,\rmM_k)_+$, and let $\pi := \pi_\phi$ for brevity.  Then $\phi$ is \textbf{$\rho$-normal} or \textbf{$\rho$-singular} according as $\pi$ has this property.  By the general description of ultraweakly continuous functionals on a von Neumann algebra~\cite[Thm. I.3.1]{Dix--vN}, $\phi$ is $\rho$-normal if and only if it is equal to $\t{\phi}\circ \rho$ for some normal positive functional $\t{\phi}$ on $\rho(\A)''$, which is then unique.  We call $\t{\phi}$ the \textbf{normal extension} of $\phi$ to $\rho(\A)''$.

Suppose that $\phi$ is associated to $\pi$ by the cyclic $k$-tuple $V$ in $H_\pi$. Let $\pi^M$ be the $\rho$-normal part of $\pi$, as given by Proposition~\ref{prop:Mac}, and let $P$ be the orthogonal projection from $H_\pi$ onto $M$.  Because $P$ commutes with $\pi$, we have $\phi = \phi_{\rm{ac}} + \phi_{\rm{sing}}$, where $\phi_{\rm{ac}} := \Phi^\pi_{PV}$ and $\phi_{\rm{sing}} := \Phi^\pi_{P^\perp V}$.  These two summands are $\rho$-normal and $\rho$-singular respectively.  We refer to this as the \textbf{Lebesgue decomposition} of $\phi$ relative to $\rho$, because it reduces to the classic Lebesgue decomposition from measure theory if $\A = C(\Omega)$ for some compact metrizable space $\Omega$ and we represent positive functionals by Borel measures.  This also explains the choice of subscripts.  Both $\phi_{\rm{ac}}$ and $\phi_{\rm{sing}}$ are still associated to $\pi$ by construction. This decomposition into $\rho$-normal and $\rho$-singular parts is unique by the uniqueness of minimal dilations and the fact that any $\rho$-normal representation is disjoint from any $\rho$-singular representation.  Such decompositions appear in~\cite{Tak58}, where they are described via the enveloping algebra of $\A$. 

If $k > 1$, then by~\eqref{eq:pi-k-commutant} and Lemma~\ref{lem:dilation-matrix-dilation} the Lebesgue decomposition of $\phi$ gives rise to a suitable decomposition of $\langle \phi,\cdot\rangle$ through the pairing isomorphism, and so its uniquenes implies that
\begin{equation}\label{eq:pairing-decomp}
\langle\phi_{\rm{ac}},\cdot\rangle = \langle \phi,\cdot\rangle_{\rm{ac}} \qquad \hbox{and} \qquad \langle\phi_{\rm{sing}},\cdot\rangle = \langle \phi,\cdot\rangle_{\rm{sing}}.
\end{equation}

Accompanying the Lebesgue decomposition, we also have versions of the Radon--Nikodym theorem for completely positive maps.  There are several of these that allow for the non-commutativity of $\A$ in different ways.  The one we need compares a map $\phi \in \frL(\A,\rmM_k)_+$ to a tracial positive functional $\tau$, and originates in work of Dye~\cite{Dye52} for the case $k=1$.  Let $\l$ be the GNS representation of $\tau$ with canonical cyclic tracial vector $\xi$, and let $\M := \l(\A)''$.  Also, in $H_\l^{\oplus k}$, define
\begin{equation}\label{eq:xi-i}
	\xi_i = [0,\dots,0,\xi,0,\dots,0]^\rm{T} \qquad (i=1,2,\dots,k),
	\end{equation}
where only the $i^\rm{th}$ coordinate of $\xi_i$ is nonzero.

When $k=1$, the result has two parts:
\begin{itemize}
\item[a.] If $\phi$ is $\l$-normal, then it is actually associated to $\l$ itself (not just to $\l^{\oplus \infty}$) by some vector $x \in H_\l$: see~\cite[Thm. III.1.4]{Dix--vN}.
\item[b.] Any vector $x \in H_\l$ is equal to $T\xi$ for a unique square-integrable operator $T$ affiliated to $\l(\A)'$: see~\cite[Exers. III.1.13]{Dix--vN} or~\cite[Prop. F.11]{BroOza08}. 
\end{itemize}

When $k > 1$, we can apply parts (a) and (b) above to the functionals $\langle \phi,\cdot\rangle$ and $\tau \otimes \tr_k$ on $\rmM_k(\A) = \A\otimes \rmM_k$.  The GNS representation of $\tau\otimes \tr_k$ is equivalent to the tensor product of $\l$ with the left-multiplication action of $\rmM_k$ on itself, where $\rmM_k$ is a Hilbert space with the normalized Hilbert--Schmidt inner product.  Let us call this tensor product representation $\l^{(k\times k)}$.  Elements of the Hilbert space $H_\l\otimes \rmM_k$ may be written as $k$-by-$k$ arrays with entries in $H_\l$, and then $\l^{(k\times k)}$ acts on such arrays by following the rule for multiplying matrices.  By regarding a $k$-by-$k$ matrix as a $k$-tuple of column vectors, we have $\l^{(k\times k)} \simeq (\l^{(k)})^{\oplus k}$.

If $\phi$ is a $\l$-normal element of $\frL(\A,\rmM_k)_+$, then $\langle \phi,\cdot\rangle$ is $\l^{(k\times k)}$-normal, and so part (a) associates $\langle \phi,\cdot \rangle$ to $\l^{(k\times k)}$ by some $k$-by-$k$ array $[x_{ij}]$ with entries in $H_\l$.  Passing back through the pairing isomorphism, this says that $\phi$ is associated to $\l^{\oplus k}$ by the $k$-tuple of vectors $x_i := [x_{i1},\dots,x_{ik}]^\rm{T}$, $i=1,2,\dots,k$.  Then applying part (b) to each entry lets us write $x_{ij} = T_{ij}\xi$ for some square-integrable operators $T_{ij}$ affiliated to $\l(\A)'$.  Finally, by matrix-vector multiplication, this representation of each $x_{ij}$ may be written as $x_i = T\xi_i$, where $T$ is the square-integrable operator affiliated to $\l^{\oplus k}(\A)'$ that is represented by the transposed array $[T_{ji}]$ (see~\cite[Prop. I.2.4]{Dix--vN}, which extends straightforwardly to affiliated operators).

We have reached the representation $\phi = \Phi^{\l^{\oplus k}}_{T\xi_1,\dots,T\xi_k}$.  If $T_1$ is another square-integrable operator affiliated to $\l^{\oplus k}(\A)'$ which also represents $\phi$ this way, then the uniqueness of minimal dilations shows that $T_1 = RT$ for some partial isometry $R$ in $\l^{\oplus k}(\A)'$, and hence $|T_1| = |T|$.

We collect the conclusions above as follows.

\begin{prop}\label{prop:RadNik}
If $\phi$ is a $\l$-normal element of $\frL(\A,\rmM_k)_+$, then there is an operator $T$ affiliated to $\l^{\oplus k}(\A)'$ such that (i) $\xi_i \in \dom T$ for every $i$ and (ii) $\phi$ is associated to $\l^{\oplus k}$ by the tuple of vectors $T\xi_1$, \dots, $T\xi_k$.  Another affiliated operator $T_1$ also satisfies (i) and (ii) if and only if $|T_1| = |T|$.  In particular, the choice of $T$ is unique if we require it to be non-negative. \qed
\end{prop}

Sometimes we need to consider all the ways in which two matrix-valued completely positive maps could `sit together' inside a larger one.  To describe these, we borrow a term from Furstenberg's classic work~\cite{Fur67} in ergodic theory.  Let $k$ and $\ell$ be positive integers, and let
\[K \:= \{1,\dots,k\} \qquad \hbox{and} \qquad L := \{k+1,\dots,k+\ell\}.\]
For any $(k+\ell)$-by-$(k+\ell)$ matrix $M$, we write $M[K]$ for its $K$-by-$K$ submatrix, and similarly for $L$.

\begin{dfn}\label{dfn:joining}
Let $\phi:\A \to \rmM_k$ and $\psi:\A \to \rmM_\ell$ be completely positive.  A \textbf{joining} of them is a completely positive map $\theta:\A\to \rmM_{k+\ell}$ such that
\[\theta(a)[K] = \phi(a) \qquad \hbox{and} \qquad \theta(a)[L] = \psi(a) \qquad (a \in \A).\]
In particular, the \textbf{diagonal joining} is defined by
\[\rm{diag}(\phi,\psi)(a) := \left[\begin{array}{cc} \phi(a) & 0\\ 0 & \psi(a)\end{array}\right] \qquad (a \in \A).\]
\end{dfn}

This terminology is not standard in representation theory, but it is a convenient way to organize various arguments below.

Comparing with ergodic theory, the diagonal joining of two completely positive maps is the analog of the product of two invariant measures.

If $V = [v_1, \dots, v_k]$ and $W = [w_1,\dots,w_\ell]$ are two tuples in a representation $\pi$, then the combined type $\Phi^\pi_{[V,W]}$ is a joining of $\Phi^\pi_V$ and $\Phi^\pi_W$.  These two tuples generate orthogonal subrepresentations of $\pi$ if and only if
\[\Phi^\pi_{[V,W]} = \rm{diag}(\Phi^\pi_V,\Phi^\pi_W).\]
On the other hand, given any joining $\theta$ of $\Phi^\pi_V$ and $\Phi^\pi_W$, the minimal dilation $\pi_\theta$ contains canonical copies of both tuples.  We may therefore characterize disjointness as follows.

\begin{lem}\label{lem:disjoint-no-join-0}
If $\phi:\A\to \rmM_k$ and $\psi:\A \to \rmM_\ell$ are completely positive, then they are disjoint if and only if they have no joinings other than $\rm{diag}(\phi,\psi)$. \qed
\end{lem}

In fact, starting with Furstenberg's paper~\cite{Fur67}, the uniqueness of the product joining is taken as the definition of `disjointness' in ergodic theory, where the lack of orthogonal complements makes other definitions impractical or senseless.

\subsection{Determinants of completely positive maps}

Let $\l$ be a representation of $\A$ with a cyclic tracial vector $\xi$, let $\tau$ be the associated tracial functional on $\A$, and let $\t{\tau}$ be its normal extension to $\l(\A)''$.  Let $\t{\tau}$ also denote the tracial positive functional defined on $\N := \l(\A)'$ by the same vector $\xi$. Write $\Delta$ for the associated Fuglede--Kadison determinant on square-integrable operators affiliated to either von Neumann algebra.

\begin{dfn}\label{dfn:FK-det-ac}
Let $\phi$ be a $\l$-normal element of $\frL(\A,\rmM_k)_+$, and let $T$ be an operator affiliated to $\l^{\oplus k}(\A)'$ that represents it as in Proposition~\ref{prop:RadNik}.  Then the \textbf{Fuglede--Kadison determinant} of $\phi$ with respect to $\tau$ is
\begin{equation}\label{eq:FK-det-ac}
\Delta_\tau \phi := (\Delta_{\t{\tau}\otimes \Tr_k} |T|)^2.
\end{equation}

For any $\phi \in \frL(\A,\rmM_k)_+$, we lighten notation by defining $\Delta_\tau \phi := \Delta_\tau \phi_{\rm{ac}}$.
\end{dfn}

Since any two possible choices of $T$ in Definition~\ref{dfn:FK-det-ac} differ by a partial isometry, this definition is unambiguous.  Intuitively, the operator $T$ is an equivariant analog of the representation of a non-negative matrix $Q$ as the square of another non-negative matrix $V$, and so~\eqref{eq:FK-det-ac} extends the formula $\det Q = (\det V)^2$.

We sometimes shorten $\Delta_\tau \phi$ to $\Delta \phi$ if the correct choice of $\tau$ is clear.

Definition~\ref{dfn:FK-det-ac} has many near relatives in the literature, particularly for functionals on von Neumann algebras rather than C*-algebras.  One line of these originates in Arveson's study of subdiagonal subalgebras in~\cite{Arv67}, which plays a key role in Section~\ref{sec:past} below.  Another is Araki's notion of the quantum relative entropy between two normal positive functionals on a von Neumann algebra~\cite{Ara7576,Ara7778}. We develop the theory we need from scratch in this subsection, but we take several steps in common with those earlier works. 

Because of the relation~\eqref{eq:pairing-decomp}, we have $\Delta_\tau \phi = \Delta_{\tau\otimes \Tr_k}(\langle \phi,\cdot\rangle)$, even when $\phi$ is not $\l$-normal.  Also, notice the un-normalized trace on the right-hand side of~\eqref{eq:FK-det-ac}. In terms of $\tr_k$ we have instead
\begin{equation}\label{eq:det-phi-det-pairing}
\Delta_\tau \phi = (\Delta_{\t{\tau}\otimes \tr_k} T)^{2k} = \big(\Delta_{\tau\otimes \tr_k}(\langle \phi,\cdot\rangle)\big)^k.
\end{equation}

We sometimes need a variant of $T$ affiliated to $\l(\A)''$ rather than $\l(\A)'$.  To obtain this when $k=1$, choose $T$ non-negative and set $S := JTJ$, where $J$ is the canonical involution from Subsection~\ref{subs:tracial}.  Then the properties of $J$ give $\xi \in \dom S$ and also the relations
\begin{equation}\label{eq:SJTJ1}
S\xi = JTJ\xi = JT\xi = T^\ast \xi = T\xi
\end{equation}
and
\begin{equation}\label{eq:SJTJ2}
\Delta |S| = \exp\int_{[0,\infty)}\log s\ \t{\tau} (JE(ds)J) = \exp\int_{[0,\infty)}\log t\ \t{\tau} E(dt) = \Delta |T|,
\end{equation}
where $E$ is the spectral resolution of $|T|$ and consequently $JE(\cdot )J$ is the spectral resolution of $|S|$.

If $\tau = \tr_k$ on $\A = \rmM_k$, then for positive functionals on $\A$ the definition~\eqref{eq:FK-det-ac} reduces to the usual determinant of a positive semi-definite $k$-by-$k$ matrix.  On the other hand, if $k=1$, $\A = C(\bbT)$, and $\tau$ is integration with respect to $m_\bbT$, then~\eqref{eq:FK-det-ac} equals a logarithmic integral as in Szeg\H{o}'s theorem.  The next example combines and generalizes these two.

\begin{ex}\label{ex:mat-val-fn}
Let $\Omega$ be a compact metrizable space and let $\A := C(\Omega,\rmM_k)$.  Fix a Borel probability measure $\mu$, and let $\tau$ be the tracial state on $\A$ given by integrating $\tr_k$ with respect to $\mu$.  Any continuous linear functional $\phi$ on $\A$ has the form
\[\phi(a) = \int \tr_k\big(a(\omega)\cdot \nu(d\omega)\big) \qquad (a \in \A)\]
for some $\rmM_k$-valued Borel measure $\nu$ on $\Omega$, and $\phi$ is positive if and only if $\nu$ takes values in $\rmM_{k+}$. The functional $\phi_{\rm{ac}}$ is then represented in the same way by $\nu_{\rm{ac}}$, the absolutely continuous part of $\nu$ with respect to $\mu$ in the usual sense of measure theory.  The function $d\nu_{\rm{ac}}/d\mu$ lies in $L^1(\mu;\rmM_{k+})$, so we can define $h \in L^2(\mu;\rmM_{k+})$ by letting $h(\omega)$ be the non-negative square root of $(d\nu_{\rm{ac}}/d\mu)(\omega)$.  Finally, the non-negative operator $T$ from Proposition~\ref{prop:RadNik} is given by pointwise multiplication by $h$ acting on a dense subspace of $L^2(\mu;\bbC^{\oplus k})$, and we evaluate
\[\Delta_\tau \phi = (\Delta_{\t{\tau}}T)^2 = \exp \frac{1}{k}\int \log \det \frac{d\nu_{\rm{ac}}}{d\mu}(\omega)\ d\mu(\omega).\]
\qed
\end{ex}

Returning to the case of a functional $\phi$ on a general C*-algebra $\A$, if $k=1$ then we can approximate $\phi_{\rm{ac}}$ using functionals of the form $\tau(a^\ast(\cdot)a)$ for $a \in \A$.  The next lemma provides such an approximation that simultaneously works `in reverse' and also approximates the determinants.

\begin{lem}\label{lem:Kap+2}
Let $k=1$, let $\phi_{\rm{ac}}$ be associated to $\l$ by $T\xi$ as in Proposition~\ref{prop:RadNik}, let $E_+$ be the orthogonal projection onto $(\ker T)^\perp$, and let $y := E_+\xi$.  Let $A$ be any dense $\ast$-subalgebra of $\A$. Then there is a sequence $(b_n)_{n\ge 1}$ of positive invertible elements of $A$ such that all of the following hold:
\begin{itemize}
\item[i.] $\tau(b_n(\cdot)b_n) \to \phi_{\rm{ac}}$;
\item[ii.] $\phi_{\rm{ac}}(b_n^{-1}(\cdot) b_n^{-1}) \to \Phi^\l_y$;
\item[iii.] $\phi_{\rm{sing}}(b_n^{-1}(\cdot) b_n^{-1}) \to 0$;
\item[iv.] $(\Delta_\tau b_n)^2 \to \Delta_\tau \phi$.
\end{itemize}
\end{lem}

\begin{proof}
\emph{Step 1.}\quad Adjust $T$ so that it is non-negative if necessary, and then let $S:= JTJ$ where $J$ is the canonical involution on $H_\l$, so this satisfies~\eqref{eq:SJTJ1} and~\eqref{eq:SJTJ2}.  In addition, let $\phi_{\rm{sing}}$ be associated to its GNS representation $\pi_{\rm{sing}}$ by the vector $u$.  Overall, $\phi$ is associated to $\pi := \pi_{\rm{sing}} \oplus \l$ by the vector $(u,S\xi)$.

For each $\delta \in (0,1)$, let $S_\delta:= (S\vee \delta)\wedge \delta^{-1}$, and consider the operator
\[R_\delta := \delta^{-1} \ \oplus \ S_\delta\]
in $\frL(H_\pi)$. The projection from $H_\pi$ to $H_\l$ lies in $\pi(\A)' \cap \pi(\A)''$ by Lemma~\ref{lem:central-projection}, and $S_\delta$ commutes with $\l(\A)'$.  Therefore the whole operator $R_\delta$ commutes with $\pi(\A)'$, and so $R_\delta$ lies in $\pi(\A)''$.  We also have $\delta \le R_\delta \le \delta^{-1}$ by construction.

\vspace{7pt}

\emph{Step 2.}\quad As $\delta \downarrow 0$, we have
\[R_\delta^{-1}(u,0) = (\delta u,0) \to 0,\]
and hence $\Phi^\pi_{R_\delta^{-1}(u,0)} \to 0$ by Lemma~\ref{lem:unif-cts}. Similarly, Lemma~\ref{lem:Kap+} gives 
\[R_\delta(0,\xi) = (0,S_\delta\xi) \to (0,S\xi) \qquad \hbox{and hence} \qquad \Phi^\pi_{R_\delta(0,\xi)} \to \Phi^\l_{S\xi} = \phi_{\rm{ac}},\]
\[R_\delta^{-1}(0,S\xi) = (0,S_\delta^{-1}S\xi)\to (0,y) \qquad \hbox{and hence} \qquad \Phi^\pi_{R_\delta^{-1}(0,S\xi)} \to \Phi^\l_y,\]
and also $\Delta_{\t{\tau}} S_\delta \to \Delta_{\t{\tau}} T$.

\vspace{7pt}

\emph{Step 3.}\quad On the other hand, for any fixed $\delta$, we can apply the Kaplansky density theorem~\cite[Sec. I.3.5]{Dix--vN} to the intersection of $\pi(A)$ with the set of all self-adjoint elements $R$ of $\pi(\A)''$ that satisfy $\delta \le R \le \delta^{-1}$.  That theorem gives a sequence $(b_{\delta,n})_{n\ge 1}$ of positive elements of $A$ such that $\delta \le b_{\delta,n} \le \delta^{-1}$ and $\pi(b_{\delta,n}) \to R_\delta$ in the strong operator topology as $n\to\infty$.  Since both inversion and $\log$ can be uniformly approximated by polynomials on the interval $[\delta,\delta^{-1}]$, it follows by functional calculus that we also have $\pi(b_{\delta,n}^{-1}) \to R_\delta^{-1}$ and $\pi (\log b_{\delta,n})\to \log R_\delta$ in the strong operator topology.  From these approximations, we now also obtain
\[\phi_{\rm{sing}}(b_{\delta,n}^{-1}(\cdot)b_{\delta,n}^{-1}) = \Phi^\pi_{\pi(b_{\delta,n}^{-1})(u,0)} \to \Phi^\pi_{R_\delta^{-1}(u,0)},\]
\[\tau(b_{\delta,n}(\cdot)b_{\delta,n})  = \Phi^\pi_{\pi(b_{\delta,n})(0,\xi)} \to \Phi^\pi_{R_\delta(0,\xi)},\]
\[\phi_{\rm{ac}}(b_{\delta,n}^{-1}(\cdot) b_{\delta,n}^{-1}) = \Phi^\pi_{\pi(b_{\delta,n}^{-1})(0,S\xi)} \to \Phi^\pi_{R_\delta^{-1}(0,S\xi)},\]
and $\Delta_\tau b_{\delta,n} \to \Delta_{\t{\tau}} S_\delta$, all as $n\to\infty$ with $\delta$ fixed.

\vspace{7pt}

\emph{Step 4.}\quad Finally, in view of Lemma~\ref{lem:lcsc}, the desired sequence $(b_n)_{n\ge 1}$ can be obtained from Steps 2 and 3 by a diagonal argument.
\end{proof}

One consequence of Lemma~\ref{lem:Kap+2} is the following variational principle.  It is sometimes more convenient than working with Definition~\ref{dfn:FK-det-ac} directly.

\begin{prop}\label{prop:det-var-princ}
Let $A$ be any dense $\ast$-subalgebra of $\rmM_k(\A)$. Let $\tau$ be a tracial state on $\A$, and $\Delta$ its associated Fuglede--Kadison determinant.  Finally, let $\phi$ be an $\rmM_k$-valued completely positive map on $\A$.  Then
\begin{equation}\label{eq:det-var-princ}
(\Delta \phi)^{1/k} = \inf\big\{\langle \phi,a\rangle:\ a \in A\ \hbox{positive and invertible and}\ \Delta_{\tau\otimes \Tr_k}(a) \ge 1\big\}.
\end{equation}
\end{prop}

\begin{proof}
Assume first that $k=1$, and adopt the notation from Lemma~\ref{lem:Kap+2}.  We prove~\eqref{eq:det-var-princ} as a pair of inequalities.

If $a \in A$ is positive, invertible, and satisfies $\Delta a \ge 1$, then
\[(\Delta S)^2 \le \Delta S\cdot \Delta a \cdot \Delta S = \Delta (S\l(a)S) \le \t{\tau}(S\l(a)S) = \phi(a).\]
The second equality holds by the multiplicativity of $\Delta$ (see~\cite[Prop. 2.5]{HaaSch07}), and the second inequality holds by~\eqref{eq:AMGM2} (since we assume that $\tau$ is normalized).  This proves the inequality ``$\le$'' when $k=1$.

On the other hand, let $(b_n)_{n \ge 1}$ be the sequence in $A$ given by Lemma~\ref{lem:Kap+2}, and let $a_n := (\Delta b_n)^2b_n^{-2}$ for each $n$.  Then $\Delta a_n \ge 1$ for each $n$ by construction, while parts (ii), (iii), and (iv) of Lemma~\ref{lem:Kap+2} give
\[\phi(a_n) = (\Delta b_n)^2\cdot(\phi_{\rm{ac}}(b_n^{-2}) + \phi_{\rm{sing}}(b_n^{-2})) \to \Delta \phi \cdot \Phi^\l_y(1).\]
Since $y$ is the projection of $\xi$ onto an invariant subspace of $\l$, and $\tau$ is normalized, we have $\Phi^\l_y(1) \le \tau(1) = 1$.  This proves the inequality ``$\ge$'' when $k=1$.

Finally, if $k > 1$, then we reduce to the scalar-valued case by considering the pairing functional $\langle \phi,\cdot\rangle$ and using~\eqref{eq:pairing-decomp} and~\eqref{eq:det-phi-det-pairing}.  The $k^{\rm{th}}$ root appears on the left-hand side of~\eqref{eq:det-var-princ} because we define $\Delta\phi$ in~\eqref{eq:FK-det-ac} using the un-normalized functional $\tau \otimes \Tr_k$, so we need to convert to using $\tau\otimes \tr_k$ via~\eqref{eq:det-phi-det-pairing}.
\end{proof}

According to Definition~\ref{dfn:FK-det-ac}, the left-hand side of~\eqref{eq:det-var-princ} does not depend on $\phi_{\rm{sing}}$ at all, so neither does the right-hand side.  Referring to the proof of Lemma~\ref{lem:Kap+2}, this is because we can use the first direct summand in the operator $R_\delta$ to suppress the singular part of $\pi$ as much as we like, and then approximate $R_\delta$ arbitrarily well by elements of $\pi(A)$ using the Kaplansky density theorem.

For the trace and determinant on $\rmM_k$, Proposition~\ref{prop:det-var-princ} is a standard inequality of matrix analysis~\cite[Exer. 7.8.P4]{HorJohMA}.  On the other hand, if $k=1$, $\A = C(\Omega)$ for some compact metrizable space $\Omega$, and $\tau$ is integration with respect to a Borel probability meaure $\mu$, then Proposition~\ref{prop:det-var-princ} becomes the classical variational principle for the `reversed' relative entropy (also called Kullback--Leibler divergence) $S(\mu\mid \nu)$~\cite[Lem. 2.3.3]{SimOPUCI}.

Arveson took a variational formula similar to~\eqref{eq:det-var-princ} as his definition of determinants for positive functionals on von Neumann algebras in~\cite[Def. 4.3.7]{Arv67}.  If $k=1$ and $\phi$ is $\l$-normal, then Proposition~\ref{prop:det-var-princ} is essentially~\cite[Prop. 2.1]{BlecLab06}.

We collect several further properties into the next proposition.  Some of them generalize~\cite[Cor. 4.3.3]{Arv67}.

\begin{prop}\label{prop:det-props}
Let $\tau$, $\l$, and $\Delta$ be as above, and let $\phi \in \frL(\A,\rmM_k)_+$ and $\psi \in \frL(\A,\rmM_\ell)_+$.
\begin{itemize}
\item[a.] If $k=\ell$ and $\phi \ge \psi$ in the positive definite order, then $\Delta\phi \ge \Delta\psi$;
\item[b.] If $k = \ell$ and $t \ge 0$, then $\Delta (t\phi) = t^k\Delta \phi$ and $\Delta (\phi + \psi) \ge \Delta \phi + \Delta \psi$;
\item[c.] The function $\Delta$ is upper semicontinuous on $\frL(\A,\rmM_k)_+$ for each $k$;
\item[d.] Any joining $\theta$ of $\phi$ and $\psi$ satisfies $\Delta \theta \le \Delta \phi\cdot \Delta \psi$, with equality if $\theta_{\rm{ac}} = \rm{diag}(\phi_{\rm{ac}},\psi_{\rm{ac}})$ (and so, in particular, if $\theta = \rm{diag}(\phi,\psi)$).
\item[e.] If $k = \ell$ and $\langle \psi,\cdot\rangle = \langle \phi,a^\ast(\cdot)a\rangle$ for some $a \in \rmM_k(\A)$ (see~\eqref{eq:new-type}), then
\begin{equation}\label{eq:FK-transformation}
\Delta_\tau \psi = (\Delta_{\tau \otimes \Tr_k} |a|)^2\Delta_\tau \phi.
\end{equation}
\end{itemize}
\end{prop}

\begin{proof}
Proposition~\ref{prop:det-var-princ} expresses $\Delta \phi$ as an infimum of weak$^\ast$ continuous positive linear functionals.  From here, parts (a), (b) and (c) follow by standard arguments of infinite-dimensional convex analysis: compare~\cite[Thm. I.6.3]{SimSMLG}, for example.

\vspace{7pt}

\emph{Part (d).}\quad First suppose that $T_1$ and $T_2$ are non-negative square-integrable operators affiliated to $\l^{\oplus k}(\A)'$ and $\l^{\oplus \ell}(\A)'$, respectively, and let $T := T_1 \oplus T_2$.  If $E_1$ and $E_2$ are the respective spectral resolutions of $T_1$ and $T_2$, then the spectral resolution of $T$ is given by $E(\cdot) := E_1(\cdot) \oplus E_2(\cdot)$. This gives the calculation of the Fuglede--Kadison determinant of $T$:
\begin{align*}
\log\Delta_{\t{\tau}\otimes \Tr_{k+\ell}}T &= \int_{[0,\infty)}\log t\ (\t{\tau}\otimes \Tr_{k+\ell})(E(dt))\\
&= \int_{[0,\infty)}\log t\ \big((\t{\tau}\otimes \Tr_k)(E_1(dt)) + (\t{\tau}\otimes \Tr_\ell)(E_2(dt))\big)\\
&= \log\Delta_{\t{\tau}\otimes \Tr_k}T_1 + \log\Delta_{\t{\tau}\otimes \Tr_\ell}T_2.
\end{align*}
Exponentiating, and then applying this identity to the operators that represent $\phi_{\rm{ac}}$ and $\psi_{\rm{ac}}$ according to Proposition~\ref{prop:RadNik}, we arrive at the desired equality when $\theta_{\rm{ac}}$ equals $\rm{diag}(\phi_{\rm{ac}},\psi_{\rm{ac}})$.

Now consider an arbitrary joining $\theta$.  Pick any positive and invertible elements $a_1 \in \rmM_k(\A)$ and $a_2 \in \rmM_\ell(\A)$ that satisfy $\Delta_{\tau\otimes \Tr_k}(a_1) \ge 1$ and $\Delta_{\tau\otimes \Tr_\ell}(a_2) \ge 1$.  Pick also a positive real value $r$, and let $a := \rm{diag}(r^\ell a_1,r^{-k}a_2)$, so this lies in $\rmM_{k+\ell}(\A)$.   By the calculation above and part (b), we have
\[\Delta_{\tau\otimes \Tr_{k+\ell}}(a) = \Delta_{\tau\otimes \Tr_k}(r^\ell a_1)\cdot \Delta_{\tau\otimes \Tr_\ell}(r^{-k}a_2) = \frac{r^{k\ell}}{r^{k\ell}}\Delta_{\tau\otimes \Tr_k}(a_1)\cdot \Delta_{\tau\otimes \Tr_\ell}(a_2) \ge 1.\]
On the other hand,
\[\langle \theta,a\rangle = \frac{kr^\ell}{k+\ell}\langle \phi,a_1\rangle + \frac{\ell r^{-k}}{k+\ell}\langle\psi,a_2\rangle.\]
The infimum of this right-hand side over $r$ is equal to $(\langle \phi,a_1\rangle\cdot \langle\psi,a_2\rangle)^{1/(k+\ell)}$.  By Proposition~\ref{prop:det-var-princ}, this implies the desired upper bound on $\Delta \theta$.

\vspace{7pt}

\emph{Part (e).}\quad We prove this when $k=1$.  The matrix-valued case follows from this by considering pairing functionals on $\rmM_k(\A)$.

If $\phi \in \A_+^\ast$ is $\l$-normal and is represented by $T$ as in Proposition~\ref{prop:RadNik}, and if $a \in \A$, then $\psi$ is represented in the same way by $T B$, where $B = J\l(a^\ast)J$ and $J$ is the canonical involution.  Now the result follows from the multiplicativity of $\Delta$ (see~\cite[Prop. 2.5]{HaaSch07}).

Finally, for an arbitrary functional $\phi$, the operator $\pi_\phi(a)$ preserves the $\l$-normal and $\l$-singular parts of $\pi_\phi$, and so $\psi_{\rm{ac}}$ and $\psi_{\rm{sing}}$ are equal to $\phi_{\rm{ac}}(a^\ast(\cdot)a)$ and $\phi_{\rm{sing}}(a^\ast(\cdot)a)$, respectively.
\end{proof}

Since the function $\log$ is monotone and continuous on $(0,\infty)$, conclusions (a) and (c) from Proposition~\ref{prop:det-props} carry over to the expression $\log \Delta \phi$ as well.  As written above, conclusion (b) does not hold for this expression, but using also the concavity of $\log$ we can still conclude that $\log\Delta$ is concave:
\begin{equation}\label{eq:log-det-concave}
\log \Delta (t\phi + (1-t)\psi)  \ge \log(t\Delta \phi + (1-t)\Delta \psi) \ge t\log \Delta \phi + (1-t)\log \Delta \psi.
\end{equation}

The next lemma generalizes~\cite[Cor. 4.3.4]{Arv67}. Let $\Psi:\A \to \B$ be a unital and completely positive map between C*-algebras, let $\tau_\A$ and $\tau_\B$ be tracial states on $\A$ and $\B$ respectively, and assume that $\tau_\A := \tau_\B\circ \Psi$.

\begin{lem}\label{lem:det-ucp}
If $\phi \in \frL(\B,\rmM_k)_+$, then $\Delta_\A(\phi \circ \Psi) \ge \Delta_\B \phi$.  If $\B \subset \A$ and $\Psi|\B$ is the identity, then this is an equality.
\end{lem}

\begin{proof}
First suppose that $k=1$.  Let $a$ be a positive and invertible element of $\A$ with $\Delta_\A a \ge 1$.  The function $\log$ is matrix-concave on $(0,\infty)$~\cite[Prob. 6.6.18]{HorJohTMA}, and this implies that $\log \Psi(a) \ge \Psi(\log a)$~\cite[Thm. 2.1]{Choi74}.  Applying $\tau_\B$, this shows that $\Delta_\B \Psi(a) \ge 1$, and so applying Proposition~\ref{prop:det-var-princ} to $\Delta_\B$ gives $\phi(\Psi(a)) \ge \Delta_\B\phi$.  Taking the infimum over $a$, another appeal to Proposition~\ref{prop:det-var-princ} turns this into $\Delta_\A (\phi\circ \Psi) \ge \Delta_\B\phi$.

On the other hand, if $\B \subset \A$ and $\Psi|\B$ is the identity, then any positive and invertible element $b$ of $\B$ with $\Delta_\B b \ge 1$ is also an element of $\A$ which satisfies $\Delta_\A b \ge 1$ and $(\phi\circ \Psi)(b) = \phi(b)$.  So this time Proposition~\ref{prop:det-var-princ} gives $\Delta_\A (\phi\circ \Psi) \le \Delta_\B\phi$ by taking the infimum over such $b$.

Finally, if $k > 1$, then we reduce to the scalar-valued case by considering the algebras $\rmM_k(\A)$ and $\rmM_k(\B)$ with the tracial states $\tau_\A\otimes \tr_k$ and $\tau_\B\otimes \tr_k$ and the map $\Psi^{(k)}$, which is still completely positive.
\end{proof}

By a classic result of Tomiyama, if $\B\subset \A$, and $\Psi:\A\to \B$ is a linear map of norm one such that $\Psi|\B$ is the identity, then $\Psi$ is completely positive, so Lemma~\ref{lem:det-ucp} can be applied.  Such a map onto a C*-subalgebra is often called a \textbf{conditional expectation} onto that subalgebra: see~\cite[Sec. 1.5]{BroOza08}.  We meet some examples in Section~\ref{sec:past}. 

\section{A Szeg\H{o}-like theorem over amenable groups}\label{sec:amenable}

Throughout this section, $\G$ is a countable group and $\l$ is its left regular representation with the usual cyclic vector $\xi$.  This vector associates the regular character to $\l$.  Unless stated otherwise, we write $\tau$ for the resulting tracial state on $C^\ast \G$, and $\t{\tau}$ for its normal extension to $\l(\G)''$ or the corresponding normal tracial functional on $\l(\G)'$.  In all three cases the associated Fuglede--Kadison determinant is denoted by $\Delta$.

\subsection{Lower bound}\label{subs:amen-LB}

In this subsection we prove the inequality ``$\ge$'' in Theorem~\ref{mainthm:amenable}.  This direction does not require the amenability of $\G$: see Corollary~\ref{cor:amen-LB} below.

The proof of this inequality can be reduced quickly to the case when $\phi$ is $\l$-normal.  For that case, the work is done by a more abstract inequality for von Neumann algebras, given in the next proposition.

\begin{prop}\label{prop:amen-LB}
Let $\M$ be a von Neumann subalgebra of $\frL(H)$.  Let $V = [x_1,\dots,x_k]$ be an orthonormal tuple in $H$ such that the functional
\[\t{\tau}(A) := \frac{1}{k}\sum_{i=1}^k\langle Ax_i,x_i\rangle \qquad (A \in \M)\]
is tracial, and let $\Delta$ be the Fuglede--Kadison determinant associated to $\t{\tau}$.  Finally, let $T$ be a non-negative operator affiliated to $\M$ whose domain contains $x_1$, \dots, $x_k$.  Then
\[\det((TV)^\ast (TV)) \ge (\Delta T)^{2k}.\]
\end{prop}

If $k=1$, then $x_1$ is a tracial vector for $\t{\tau}$, and the inequality above is simply~\eqref{eq:AMGM2} for $T^2$.  Put roughly, we prove the general case by choosing carefully a single vector in the tensor product $H^{\otimes k}$ that reduces the desired inequality to this special case.  More precisely, we first prove Proposition~\ref{prop:amen-LB} when $T$ is bounded and invertible, and then extend to the general case using Lemma~\ref{lem:Kap+}.  (We could merge these steps by working with tensor products of unbounded, densely-defined operators, but the resulting technicalities seem to outweigh the advantages.)

For any bounded operator $A$ on $H$, let
\[A_i := I_H\otimes \cdots \otimes I_H\otimes A\otimes I_H\otimes \cdots \otimes I_H \in \frL(H^{\otimes k}),\]
where $A$ is in the $i^\rm{th}$ position.  The operators $A_1$, \dots, $A_k$ are all still bounded; they commute; and if $A$ is self-adjoint then so is every $A_i$.  The tensor product $T^{\otimes k} \in \frL(H^{\otimes k})$ is equal to the product $A_1A_2\cdots A_k$.

In the coming proof, we apply such tensor products to vectors of the form
\begin{equation}\label{eq:wedge}
x_1\wedge \cdots \wedge x_k := \frac{1}{\sqrt{k!}}\sum_{\pi}\rm{sgn}(\pi)x_{\pi(1)}\otimes \cdots \otimes x_{\pi(k)} \qquad (x_1,\dots,x_k \in H),
\end{equation}
where the sum runs over all permutations of $\{1,2,\dots,k\}$. This vector is called the \textbf{alternating product} of $x_1$, \dots, $x_k$.  Since $H$ is a Hilbert space, the closed span of all alternating product vectors can be identified with the alternating product space $H^{\wedge k}$: see, for instance,~\cite[Sec. 1.5]{SimTIA}.  If $x_1$, \dots, $x_k$ are orthonormal then $x_1\wedge \cdots \wedge x_k$ is a unit vector, and more generally alternating products satisfy
\begin{equation}\label{eq:alt-det-form}
\langle x_1\wedge \cdots \wedge x_k,y_1\wedge \cdots \wedge y_k\rangle = \det[\langle x_i,y_j\rangle]
\end{equation}
(see, for instance,~\cite[equation (1.10)]{SimTIA}).

\begin{proof}[Proof of Proposition~\ref{prop:amen-LB}]
\emph{Step 1.}\quad Assume first that $T$ lies in $\M$ and has a bounded inverse.  Since it is also positive definite, we can define the new self-adjoint operator $S := \log T$ by the functional calculus.  Then $S_i$ is equal to $\log T_i$, because this is a self-adjoint operator whose exponential equals $T_i$, and such an operator is unique.  By the functional calculus for the commuting self-adjoint operators $T_1$, \dots, $T_k$, it follows that
\begin{equation}\label{eq:logTk}
\log T^{\otimes k} = \log(T_1\cdots T_k) = S_1 + \cdots + S_k.
\end{equation}

Recall that $V = [x_1,\dots,x_k]$, and let $z := {x_1\wedge \cdots \wedge x_k}$.  This is a unit vector in $H^{\otimes k}$ because $x_1$, \dots, $x_k$ are orthonormal.  Substituting from~\eqref{eq:wedge}, it satisfies
\[\langle S_1z,z\rangle = \frac{1}{k!}\sum_{\s,\pi}\rm{sgn}(\s\pi) \langle Sx_{\s(1)},x_{\pi(1)}\rangle \langle x_{\s(2)},x_{\pi(2)}\rangle\cdots \langle x_{\s(k)},x_{\pi(k)}\rangle.\]
Since $x_1$, \dots, $x_k$ are orthogonal, the summand on the right vanishes unless $\s(2) = \pi(2)$, \dots, $\s(k) = \pi(k)$, and hence actually $\s = \pi$.  For these summands, we have $\rm{sgn}(\s\pi) = 1$, and every factor of the form $\langle x_{\s(i)},x_{\pi(i)}\rangle$ also equals $1$.  As a result, the equation above simplifies to
\[\langle S_1z,z\rangle = \frac{1}{k!}\sum_\pi \langle Sx_{\pi(1)},x_{\pi(1)}\rangle= \frac{1}{k}\sum_{i=1}^k \langle Sx_i,x_i\rangle= \t{\tau}(S)= \log \Delta T.\]
By symmetry, the analogous formula also holds for $S_2$, \dots, $S_k$.  Adding these together and substituting from~\eqref{eq:logTk}, we arrive at
\begin{equation}\label{eq:klogDeltaT}
k\cdot \log\Delta T = \langle S_1z,z\rangle + \cdots + \langle S_kz,z\rangle = \langle \log T^{\otimes k}z,z\rangle.
\end{equation}
On the other hand, if $E$ is the spectral resolution of $T^{\otimes k}$ on $[0,\infty)$, then
\begin{align*}
\langle \log T^{\otimes k}z,z\rangle &= \int_{[0,\infty)} \log t\ \langle E(dt)z,z\rangle \\
&\le \frac{1}{2}\log \int_{[0,\infty)}t^2\ \langle E(dt)z,z\rangle \\
&= \frac{1}{2}\log \langle T^{\otimes k}z,T^{\otimes k}z\rangle \\
&= \frac{1}{2}\log \det [\langle Tx_i,Tx_j\rangle],
\end{align*}
where we use Jensen's inequality~\cite[Thm. 3.3]{Rudin--realcplx} on the second line and~\eqref{eq:alt-det-form} on the last line.  Combining this calculation with~\eqref{eq:klogDeltaT} completes the proof.

\vspace{7pt}

\emph{Step 2.}\quad Now let $T$ be any unbounded non-negative operator affiliated to $T$ whose domain contains $x_1$, \dots, $x_k$.  Apply Step 1 to the operators $T_\delta$ from Lemma~\ref{lem:Kap+}:
\[(\Delta T_\delta)^{2k} \le \det [\langle T_\delta x_i,T_\delta x_j\rangle].\]
As $\delta \downarrow 0$, this inequality converges to the desired conclusion, by applying parts (ii) and (i) of Lemma~\ref{lem:Kap+} to the left- and right-hand sides, respectively.
\end{proof}

\begin{rmk}
In the notation above, let $\omega$ be the pure state on $\frL(H^{\otimes k})$ defined by the vector $z$. The calculations above show that $\omega(S_1) = \t{\tau}(S)$ for any $S \in \M$, or equivalently that
\[\omega|\M\otimes I_H\otimes \cdots \otimes I_H = \t{\tau}\otimes 1\otimes \cdots \otimes 1.\]
The same holds with $\M$ in any other position in the tensor product, by symmetry.  However, $\omega|\M^{\otimes k}$ is typically not equal to $\t{\tau}^{\otimes k}$: indeed, the formula for $\langle T^{\otimes k}z,z\rangle$ as a determinant would violate this.  This is why the application of Jensen's inequality must be written out in terms of $z$ and $E$, not simply as an instance of the infinitary determinant-trace inequality~\eqref{eq:AMGM2}. \fin
\end{rmk}

Now let $\phi:\G\to\rmM_k$ be a positive definite function.  As in Theorem~\ref{mainthm:amenable}, for any finite subset $F$ of $\G$, we consider the $F$-by-$F$ block matrix
\[\phi[F] := [\phi(g^{-1}h):\ g,h \in F].\]

\begin{cor}\label{cor:amen-LB}
If $F$ is finite and nonempty, then
\[\det \phi[F] \ge (\Delta \phi)^{|F|},\]
where $\Delta$ is the Fuglede--Kadison determinant associated to the regular character.
\end{cor}

\begin{proof}
The Lebesgue decomposition gives $\phi[F_n] \ge \phi_{\rm{ac}}[F_n]$ in the positive definite ordering for every $n$, and so their determinants are ordered the same way~\cite[Corollary 7.7.4(e)]{HorJohMA}.  We may therefore discard $\phi_{\rm{sing}}$ and assume that $\phi$ is $\l$-normal.

Let $H := H_\l$, let $\t{\tau}$ be normal tracial state constructed from $\l$ and $\xi$ on $\l(\G)'$, and let $\M := \l^{\oplus k}(\G)'$.  Let $\xi_1$, \dots, $\xi_k$ be the cyclic $k$-tuple for $\l^{\oplus k}$ as in~\eqref{eq:xi-i}. This $k$-tuple satisfies
\begin{equation}\label{eq:matrix-tau}
(\t{\tau}\otimes \tr_k)(A) = \frac{1}{k}\sum_{i=1}^k\langle A\xi_i,\xi_i\rangle \qquad (A\in \M).
\end{equation}

Since $\phi$ is $\l$-normal, Proposition~\ref{prop:RadNik} gives a non-negative operator $T$ affiliated to $\M$ such that $\xi_1,\dots,\xi_k \in \dom T$ and
\begin{equation}\label{eq:phi-T}
\phi(g) = [\langle \l^{\oplus k}(g)T\xi_j,T\xi_i\rangle]_{i,j} \qquad (g \in \G).
\end{equation}

Now define an orthonormal $k|F|$-tuple in $H$ by
\[V := [\l^{\oplus k}(g)\xi_i:\ i=1,\dots,k,\ g \in F].\]
Regarded as a unitary embedding from $\bbC^{\oplus k|F|}$ into $H$, this tuple satisfies
\[\frac{1}{k|F|}\Tr(V^\ast A V) = \frac{1}{|F|}\sum_{g \in F}\frac{1}{k}\sum_{i=1}^k \langle A\l^{\oplus k}(g)\xi_i,\l^{\oplus k}(g)\xi_i\rangle \qquad (A \in \M).\]
Since $A$ commutes with $\l^{\oplus k}$, this simplifies to the expression in~\eqref{eq:matrix-tau}.  On the other hand, the definition of $V$ and the fact that $T$ commutes with $\l^{\oplus k}$ give
\begin{align*}
\phi[F] &= [\phi(g^{-1}h):\ g,h \in F] \\
&= [\langle\l^{\oplus k}(h)T\xi_j,\l^{\oplus k}(g)T\xi_i\rangle:\ i,j=1,\dots,k,\ g,h \in F] \\
&= [\langle T\l^{\oplus k}(h)\xi_j,T\l^{\oplus k}(g)\xi_i\rangle:\ i,j=i,\dots,k,\ g,h \in F] \\
&=(TV)^\ast (TV).
\end{align*}
Because of this calculation and~\eqref{eq:matrix-tau}, we can now apply Proposition~\ref{prop:amen-LB} to obtain
\[\det \phi[F] = \det ((TV)^\ast (TV)) \ge (\Delta_{\t{\tau}\otimes \tr_k}T)^{2k|F|} = (\Delta \phi)^{|F|},\]
recalling~\eqref{eq:det-phi-det-pairing} for the final equality.
\end{proof}

\subsection{Upper bound and completed proof of Theorem~\ref{mainthm:amenable}}\label{subs:amen-UB}

Our proof of the inequality ``$\le$'' in Theorem~\ref{mainthm:amenable} uses the variational principle from Proposition~\ref{prop:det-var-princ}.  This saves us from having to handle $\phi_{\rm{sing}}$ explicitly: it has already been controlled inside the proof of that principle.

This direction does require the right F\o lner property of $(F_n)_{n \ge 1}$.  We apply it through the next lemma and its corollary.

\begin{lem}\label{lem:phiFnAn}
Let $\phi:\G\to\rmM_k$, let $a:\G\to\rmM_k$ be finitely supported, and let $(F_n)_{n \ge 1}$ be a right F\o lner sequence.  There are subsets $E_n$ of $F_n$ such that $|F_n\setminus E_n| = o(|F_n|)$ and
\[(\phi[F_n]\cdot a[F_n])(g,h) = (\phi\ast a)[F_n](g,h) \qquad \hbox{whenever}\ (g,h) \in F_n \times E_n.\]
\end{lem}

\begin{proof}
Let $S := \{h:\ a(h) \ne 0\}$, so this is finite by assumption, and now let
\[E_n := \{h \in F_n:\ hS^{-1} \subset F_n\} = F_n\cap\bigcap_{s \in S}(F_ns) \qquad (n=1,2,\dots).\]
This satisfies $|F_n\setminus E_n| = o(|F_n|)$ by the right F\o lner property of $(F_n)_{n\ge 1}$.

For any $g,h \in F_n$, the definition~\eqref{eq:convo} gives
\[(\phi\ast a)[F_n](g,h) = (\phi\ast a)(g^{-1}h) = \sum_k \phi(g^{-1}k)a(k^{-1}h).\]
In this sum, the factor $a(k^{-1}h)$ is nonzero only if $k^{-1}h \in S$, or equivalently $k \in hS^{-1}$.  If $h \in E_n$, then $hS^{-1} \subset F_n$, so for these $h$ the sum above agrees with
\[\sum_{k\in F_n} \phi(g^{-1}k)a(k^{-1}h) = (\phi[F_n]\cdot a[F_n])(g,h).\]
\end{proof}

\begin{cor}\label{cor:phiFnAn}
Let $(F_n)_{n\ge 1}$ be a right F\o lner sequence.  If $\phi:\G\to \rmM_k$ is bounded and $a:\G\to\rmM_k$ is finitely supported, then
\[\tr_{k|F_n|}\big(\phi[F_n]\cdot a[F_n]\big) \to \tr_k((\phi\ast a)(e)) \qquad \hbox{as}\ n\to\infty.\]
\end{cor}

\begin{proof}
By Lemma~\ref{lem:phiFnAn}, there are subsets $E_n$ of $F_n$ such that $|F_n\setminus E_n| = o(|F_n|)$ and such that the matrix
\[\phi[F_n]\cdot a[F_n] - (\phi\ast a)[F_n]\]
vanishes in all columns indexed by $E_n$.  Its remaining entries can be bounded using the operator norm $\|\cdot\|$ on $\rmM_k$ and the triangle inequality, with the result that
\begin{multline*}
\big|\tr_{k|F_n|}\big(\phi[F_n]\cdot a[F_n]\big) - \tr_{k|F_n|}\big((\phi\ast a)[F_n])\big| \\ \le 2\cdot \Big(\sum_g\|a(g)\|\Big)\cdot \Big(\sup_g\|\phi(g)\|\Big)\cdot \frac{|F_n\setminus E_n|}{|F_n|} \to 0.
\end{multline*}
Finally, every diagonal block of $(\phi\ast a)[F_n]$ is simply equal to $(\phi\ast a)(e)$, so
\[\tr_{k|F_n|}\big((\phi\ast a)[F_n]) = \tr_k((\phi\ast a)(e)).\]
\end{proof}

The other ingredient we need to prove Theorem~\ref{mainthm:amenable} is a special case of that theorem which already appears in the literature.

\begin{prop}\label{prop:Den}
Let $(F_n)_{n\ge 1}$ be a right F\o lner sequence.  Suppose that ${a:\G\to\rmM_k}$ is finitely supported, positive definite, and invertible in $\rmM_k(C^\ast \G)$.  Then
\[(\det a[F_n])^{1/k|F_n|} \to \Delta_{\tau \otimes \tr_k} a \qquad \hbox{as}\ n\to\infty.\]
\qed
\end{prop}

The earliest reference I know that includes Proposition~\ref{prop:Den} for general amenable groups is~\cite[Thm. 3.2]{Den06}.  The proof starts with estimates similar to Lemma~\ref{lem:phiFnAn}, but in which all functions on $\G$ are finitely supported.  Using these, one shows by induction on $d$ that
\[\tr_{k|F_n|}\big(a[F_n]^d\big) \to \tr_k(a^{\ast d}(e)) \qquad \hbox{as}\ n\to\infty,\]
and then by taking linear combinations that
\[\tr_{k|F_n|}(p(a[F_n])) \to \tr_k(p(a)(e)) \qquad \hbox{as}\ n\to\infty\]
for any polynomial $p$.  Finally, the convergence of determinants follows by approximating $\log$ uniformly by polynomials on a compact subinterval of $(0,\infty)$.

\begin{proof}[Proof of Theorem~\ref{mainthm:amenable}]
We prove~\eqref{eq:amenable} as a pair of inequalities.

\vspace{7pt}

\emph{Step 1.}\quad The inequality ``$\ge$'' holds for every $n$ individually by Corollary~\ref{cor:amen-LB}.

\vspace{7pt}

\emph{Step 2.}\quad Let $\phi:\G\to\rmM_k$ be positive definite, and let $a\in \rmM_k(\bbC[\G])$ be positive definite, invertible in $\rmM_k(C^\ast \G)$, and satisfy $\Delta_{\tau\otimes \tr_k} a \ge 1$.  Then Proposition~\ref{prop:Den} gives
\begin{equation}\label{eq:dets-conv}
(\det a[F_n])^{1/k|F_n|} \to \Delta_{\tau\otimes \tr_k} a \ge 1\qquad \hbox{as}\ n\to\infty.
\end{equation}

Define $\phi^\rm{t}$ from $\phi$ as in equation~\eqref{eq:phi-hatphi}.  This satisfies
\[\phi^\rm{t}[F_n] = [\phi^\rm{t}(g^{-1}h):\ g,h \in F_n] = [\phi(h^{-1}g)^\rm{T}:\ g,h \in F_n] = (\phi[F_n])^\rm{T},\]
where the right-hand side is the transpose of $\phi[F]$ as a $k|F|$-by-$k|F|$ matrix.  Consequently, $\phi[F_n]$ and $\phi^\rm{t}[F_n]$ have the same determinant.

Now the multiplicativity of determinants, the determinant-trace inequality~\eqref{eq:AMGM}, and the trace property give
\begin{align*}
(\det \phi[F_n])^{1/k|F_n|}\cdot (\det a[F_n])^{1/k|F_n|} 
&= \big(\det (\sqrt{a[F_n]}\cdot \phi^\rm{t}[F_n]\cdot \sqrt{a[F_n]})\big)^{1/k|F_n|} \\
&\le \frac{1}{k|F_n|}\Tr (\sqrt{a[F_n]}\cdot \phi^\rm{t}[F_n]\cdot \sqrt{a[F_n]})\\
&= \frac{1}{k|F_n|}\Tr (\phi^\rm{t}[F_n]\cdot a[F_n]).
\end{align*}
By Corollary~\ref{cor:phiFnAn} and the calculation~\eqref{eq:matrix-pos-def-convo}, the last normalized trace converges to
\[\tr_k((\phi^\rm{t}\ast a)(e)) = \langle \phi,a\rangle\]
as $n\to\infty$. Combining this with~\eqref{eq:dets-conv}, we have shown that
\[\limsup_{n\to\infty} (\det \phi[F_n])^{1/k|F_n|} \le \langle \phi,a\rangle.\]
Taking the infimum over $a$, Proposition~\ref{prop:det-var-princ} completes the proof of ``$\le$''.
\end{proof}

\subsection{Further remarks}\label{subs:A-rmks}

\subsubsection*{\emph{Comparison with previous work}}

Our proof of Theorem~\ref{mainthm:amenable} has elements in common with various proofs of Szeg\H{o}'s theorem itself.  Here are two examples:
\begin{itemize}
\item The proof of Szeg\H{o}'s theorem presented in~\cite[Sec. 2.3]{SimOPUCI}, which is a version of Verblunsky's original proof, uses a classical predecessor of the variational principle from Proposition~\ref{prop:det-var-princ}.  But there it is used only to establish weak$^\ast$ upper semicontinuity of the right-hand side of~\eqref{eq:Szeg} as a function of $\mu$.  This is a preparation for that proof of Szeg\H{o}'s theorem, but not really an application inside the proof itself.  In other respects, that proof of Szeg\H{o}'s theorem seems essentially disjoint from our proof of Theorem~\ref{mainthm:amenable}.

\item In case $\mu \ll m$ in Szeg\H{o}'s theorem, the inequality ``$\ge$'' is often proved by an application of Jensen's inequality.  This step essentially corresponds to our use of Proposition~\ref{prop:amen-LB} above.
\end{itemize}

On the other hand, as discussed in the Introduction, most traditional proofs of Szeg\H{o}'s theorem make rather explicit use of a notion of the `past' of a vector under a unitary operator.  In the first place, this refers to the subspace $N$ in~\eqref{eq:Szeg-again}.  In the setting of Theorem~\ref{mainthm:amenable} we must do without this structure.  A meaning of `past' reappears when we turn to Theorem~\ref{mainthm:past} in the next section.

In this respect, we take inspiration from research in ergodic theory that studies entropy without conditioning on the past as in~\eqref{eq:entropy-past}.  This program is discussed more fully in~\cite{GlaThoWei00}. Regarding Fuglede--Kadison determinants and generalizations of Szeg\H{o}'s theorem itself, some other recent precedents for our work in ergodic theory also have this flavour.  We discuss these next.

For a polynomial $f$ in $d$ variables, a log-integral much like the right-hand side of~\eqref{eq:Szeg} defines its `Mahler measure'.  If $f$ has integer coefficients, then it can be used to construct an action of $\bbZ^d$ by automorphisms of a compact Abelian group.  This action necessarily preserves the Haar measure of that compact group, and the Kolmogorov--Sinai entropy of this measure-preserving action turns out to equal the logarithm of the Mahler measure of $f$~\cite{LindSchWar90}.

Starting from the observation that a Mahler measure is a Fuglede--Kadison determinant, Deninger generalized this construction to allow $f \in \rmM_k(\bbZ[\G])$ for any positive integer $k$ and countable group $\G$.  He conjectured that the Kolmogorov--Sinai entropy of the resulting system $\bs{X}_f$ should be the Fuglede--Kadison determinant of $f$ whenever $\G$ is amenable and $f$ is non-singular as an operator on $\ell^2(\G)^{\oplus k}$. He proved this under various extra hypotheses: see the description with further references in~\cite{Den09}.

Deninger proved~\cite[Thm. 3.2]{Den06} as a step towards those results.  This theorem contains our Proposition~\ref{prop:Den}.  As Deninger describes, similar arguments already appear in earlier works on $L^2$-invariants in algebraic topology such as~\cite{Luc94,DodMat98,Schi01}.

Li made further progress in~\cite{Li12}, where he proved Deninger's conjectured entropy formula whenever $\G$ is amenable and $f$ is invertible in $\rmM_k(\l(\G)'')$.  Among Li's technical ingredients,~\cite[Corollary 7.2]{Li12} strengthens Proposition~\ref{prop:Den} by allowing certain additional perturbations to each of the matrices $\phi[F_n]$.

Finally, Li and Thom proved Deninger's full conjecture in~\cite{LiTho14}, and generalized it further to a larger class of actions by automorphisms of compact Abelian groups.  This work needed another strengthening of Proposition~\ref{prop:Den} as an ingredient: see~\cite[Thm. 1.4]{LiTho14}, which allows positive elements of $\rmM_k(\l(\G)'')$ that are not necessarily invertible.  This is equivalent to the case of Theorem~\ref{mainthm:amenable} when $\phi$ is $\l$-normal and the operator $T$ from Proposition~\ref{prop:RadNik} is bounded.

Apart from these points of contact, the other details of our proof of Theorem~\ref{mainthm:amenable} are largely disjoint from those previous works.  The most obvious difference is our use of the variational principle from Proposition~\ref{prop:det-var-princ}.  In addition, not all aspects of Szeg\H{o}'s theorem make an appearance in those previous papers: for example, they all assume that $f$ already lies in $\rmM_k(\l(\G)'')$, so there is no singular part to take care of.

\subsubsection*{\emph{Difficulties of extension beyond amenable groups}}

It is natural to ask about generalizations of Theorem~\ref{mainthm:amenable} to non-amenable groups or other C*-algebras.  Various directions suggest themselves, but none of them seems entirely straightforward.  We quickly discuss four of these here.  For simplicity, we now restrict our attention to scalar-valued positive definite functions and positive functionals.

First, Corollary~\ref{cor:amen-LB} immediately gives the following: in the notation of Theorem~\ref{mainthm:amenable}, if $\G$ is any countable group and $\phi:\G\to \bbC$ is positive definite, then
\begin{equation}\label{eq:amenable-inf-version}
\Delta \phi_{\rm{ac}} \le \inf\big\{(\det \phi[F])^{1/|F|}:\ F \subset \G\ \hbox{finite and nonempty}\big\}.
\end{equation}
Theorem~\ref{mainthm:amenable} shows that this is an equality if $\G$ is amenable.  I suspect that this implication can be reversed, even if we allow only certain positive definite functions:

\begin{prob}\label{prob:characterize-amenable}
Suppose that equality holds in~\eqref{eq:amenable-inf-version} whenever $\phi = \tau(a^\ast (\cdot)a)$ for some $a \in \bbC[\G]$.  Must $\G$ be amenable?
\end{prob}

If the answer here is positive and $\G$ is not amenable, then it might be worth looking more closely at the `gap' in~\eqref{eq:amenable-inf-version}, perhaps for special elements of $\bbC[\G]$ such as the Laplacians corresponding to finite symmetric subsets of $\G$.

\begin{prob}
For particular choices of $a$ as in Problem~\ref{prob:characterize-amenable}, how does the gap in~\eqref{eq:amenable-inf-version} relate to other measures of non-amenability such as the isoperimetric profile of finite subsets of $\G$?
\end{prob}

For our second direction, let us consider the possibility of replacing finite subsets of $\G$ in Theorem~\ref{mainthm:amenable} with finite quotients.  This moves us to the class of residually finite groups.  By allowing quotients with small defects, one could extend this further to sofic groups~\cite{KwiPes13}, but we leave these aside here.

If $\G$ is residually finite, then it has a sequence of permutation representations $\s_n:\G \to \rm{Sym}(V_n)$ on finite sets such that the kernels $\ker \s_n$ have trivial intersection.  By replacing each $\s_n$ with the diagonal action of many copies of $\s_n$ if necessary, we may assume further that this sequence is asymptotically free, meaning that
\begin{equation}\label{eq:asymp-free}
|\{v\in V_n:\ \s_n(g)v = v\}| = o(|V_n|) \qquad \hbox{as}\ n\to\infty\ \hbox{for every}\ g \in \G\setminus e.
\end{equation}

Let $\pi_n$ be the unitary representation on $\bbC^{\oplus V_n}$ induced by $\s_n$.  Then~\eqref{eq:asymp-free} implies the convergence $\tr_{|V_n|} \pi_n(a) \to \tau(a)$ for any $a \in \bbC[\G]$, and then also for any $a \in C^\ast \G$ by approximation in norm.  From here one quickly reaches an analog of Proposition~\ref{prop:Den}: if $a \in C^\ast \G$ is positive and invertible, then
\begin{equation}\label{eq:asymp-free-det}
(\det \pi_n(a))^{1/|V_n|} \to \Delta a \qquad \hbox{as}\ n\to\infty.
\end{equation}
The proof is closely analogous to that of Proposition~\ref{prop:Den}.  See~\cite[Thm. 6.1]{DenSch07} (which makes the slightly stronger assumption that $a$ is invertible in the Banach algebra $\ell^1(\G)$) and \cite[Lem. 7.2 and Thm. 7.3]{KerLi11b}.

However, beyond~\eqref{eq:asymp-free-det} for positive and invertible elements $a$, a couple of serious difficulties quickly present themselves.
\begin{enumerate}
\item Suppose that $a$ is non-negative but not necessarily invertible.  The proof of~\eqref{eq:asymp-free-det} depends on approximating $\log$ by polynomials, and such approximations break down near the origin.  On the other hand, our proof of Proposition~\ref{prop:amen-LB} considers alternating products in the fixed Hilbert space of the regular representation, and has no obvious modification for the sequence of spaces $\bbC^{\oplus V_n}$.  Absent either of these arguments, we obtain only
\[\limsup_{n\to\infty}(\det \pi_n(a))^{1/|V_n|} \le \Delta a.
\]
It turns out that this inequality really can be strict: see~\cite[Ex. 13.69]{Luc02}.  This happens if $a$ is non-singular and satisfies $\Delta a > -\infty$, but the finitary matrices $\pi_n(a)$ have a few extremely small eigenvalues that drag their determinants down far below $\Delta a$.

The inequality above is still enough for some valuable applications.  An example is Elek and Szab\'o's proof of L\"uck's determinant conjecture for sofic groups in~\cite{EleSza05}; see also~\cite{BalSka16}.

\item The convergence in~\eqref{eq:asymp-free-det} can be phrased as a fact about the positive definite function $\phi = \tau(a^\ast(\cdot)a)$.  The square of the left-hand side of~\eqref{eq:asymp-free-det} is the determinant of a finite-dimensional matrix that approximates $\Delta \phi$.  But for more general positive definite functions, it is not even clear how to choose finite-dimensional matrices that could serve in such an approximation.  For a finite subset $F$ of $\G$, we can always form $\phi[F]$ simply by restricting, but no obvious analog of `restriction' applies to give matrices over the sets $V_n$.  This is already a problem even if $\phi$ is associated to the regular representation $\l$, but not by a vector of the form $\l(a)\xi$ for some $a \in \A$.
\end{enumerate}

Hayes discusses the first of these difficulties further near the end of~\cite[Intro.]{Hayes16}.  That paper concerns an analog of Deninger's entropy formula for certain dynamical systems of algebraic origin over sofic groups.  For Hayes, the change of focus to sofic entropy meant that he could sidestep the first difficulty above, and the second did not arise because sofic entropy is not defined directly as a limit of normalized Shannon entropy values.  We meet a similar situation in our study of almost periodic entropy and Theorem~\ref{mainthm:det-form} below.  See also the discussion in Subsection~\ref{subs:C-rmks}.

In a third direction, we could ask about generalizations of Theorem~\ref{mainthm:amenable} to other C*-algebras besides group algebras, but retaining an assumption to play the role of `amenability'.

A simple motivating result can be obtained if $\M$ is a finite von Neumann subalgebra of $\frL(H)$ and $\t{\tau}$ is a faithful normal tracial state on $\M$.  Let $\Delta$ be the Fuglede--Kadison determinant associated to $\t{\tau}$.  Let $k_1$, $k_2$, \dots be a divergent sequence of positive integers, let $V_n$ be an orthonormal $k_n$-tuple in $H$ for each $n$, and let $P_n := V_nV_n^\ast$ (the orthogonal projection from $H$ onto $\rm{img}\,V_n$).  Finally, assume the following two properties:
\begin{itemize}
\item $\tr_{k_n}(V_n^\ast AV_n) = \t{\tau}(A)$ for every $A \in \M$, and
\item $\|P_nA - AP_n\|_2 = o(\sqrt{k_n})$ as $n\to\infty$ for every $A \in \M$, where $\|\cdot\|_2$ is the Hilbert--Schmidt norm.
\end{itemize}
Then
\[(\det V_n^\ast A V_n)^{1/k_n}\to \Delta A \qquad \hbox{as}\ n\to\infty\ \hbox{for every non-negative}\ A \in \M.\]
Indeed, the inequality ``$\ge$'' holds for every $n$ by another appeal to Proposition~\ref{prop:amen-LB}, and its reverse holds in the limit by adapting the proof of Proposition~\ref{prop:Den}.

Sequences $(V_n)_{n\ge 1}$ satisfying the two properties above have long-standing importance in the study of hyperfiniteness~\cite[Chap. III.7]{Dix--vN}.  Indeed, among factors of type II$_1$, the existence of such a sequence identifies uniquely the hyperfinite one: see~\cite[Thm. 5.1]{Con76} and also the related discussion in~\cite[Sec. 1]{Arv77}.

Motivated by this result for von Neumann algebras, one could try to push Theorem~\ref{mainthm:amenable} towards a class of abstract C*-algebras that satisfy a substitute for amenability such as nuclearity~\cite[Sec. 2.3]{BroOza08} or quasidiagonality~\cite[Chap. 7]{BroOza08}.  However, I do not know precisely what statement one should try to prove.  It would need to be formulated carefully to prevent versions of difficulties (1) and (2) above from re-emerging. 

\begin{prob}
Formulate and prove a generalization of Theorem~\ref{mainthm:amenable} for positive functionals on an abstract class of C*-algebras that generalizes the class of group C*-algebras of countable amenable groups.
\end{prob}

For a concrete family of examples lying in this direction, one could start with C*-algebras of amenable groupoids: see~\cite[Sec. 5.6]{BroOza08} for an introduction and further references.

\section{Invariant random orders}\label{sec:past}

Throughout this section, $\G$ is a countable group, $\l$ is its left regular representation with the usual cyclic unit vector $\xi$, and $\chi$ denotes both the regular character on $\G$ and also its extension to a tracial state on $C^\ast \G$. (We use $\tau$ for a different functional in this section: see~\eqref{eq:trace-on-A0} below.)

Our proof of Theorem~\ref{mainthm:past} rests on a major result about Arveson's subdiagonal subalgebras.  When he introduced these subalgebras of finite von Neumann algebras in~\cite{Arv67}, Arveson already proposed a generalized Szeg\H{o}-type theorem for them, and showed its equivalence to two variants of Jensen's inequality: see the discussion around~\cite[properties 4.4.($\a$),($\b$),($\g$)]{Arv67}.  Arveson also proved all three assertions for several concrete examples in that paper, but not in general.  The general case was finally proved by Labuschagne in~\cite{Lab05}. We use this in the proof of Theorem~\ref{thm:subdiag-Szego}, which is itself another variant of Szeg\H{o}'s theorem, and which then implies Theorem~\ref{mainthm:past} fairly directly.  However, we first need some preparations to make contact with this machinery.


\subsection{Invariant random orders and C*-crossed products}\label{subs:inv-rndm}

We identify the set of all relations on $\G$ with $\{0,1\}^{\G\times \G}$, and give it the resulting product topology, which is compact and metrizable.  The subset $\Omega$ of all total orders is nonempty, closed, and invariant under the diagonal action of $\G$ on $\G\times \G$ by left translation.  For the sake of more familiar notation, we write ``$g <_\omega h$'' instead of just ``$g \omega h$'' when $\omega \in \Omega$.  We denote the action itself by $(g,\omega) \mapsto g\cdot \omega$, so it can be expressed like this:
\begin{equation}\label{eq:order-equiv}
gh <_{g\cdot \omega} gk \qquad \Leftrightarrow \qquad h <_\omega k \qquad (g,h,k \in \G).
\end{equation}

Let $\frD:= C(\Omega,\rmM_k)$, regarded as a C*-algebra.  Then $\G$ acts on $\frD$ by pre-composition, and we can form the resulting full C*-crossed product~\cite[Sec. 4.1]{BroOza08}.  In the sequel we need a specific construction of it.  Let $\A_0$ be the vector space of all continuous $\rmM_k$-valued functions on $\Omega\times \G$ with compact support, made into a *-algebra with these twisted versions of multiplication and involution:
\begin{equation}\label{eq:twisted}
(a\ast b)(\omega,g) := \sum_h a(\omega,h)b(h^{-1}\cdot \omega,h^{-1}g) \quad \hbox{and} \quad a^\ast(\omega,g) := a(g^{-1}\cdot \omega,g^{-1})^\ast.
\end{equation}
Then $\A_0$ is the linear span of elements that have the form
\[(d\otimes \delta_g)(\omega,h) := d(\omega)\delta_g(h) \qquad (\omega \in \Omega, h \in \G)\]
for some $d \in \frD$ and $g \in \G$.

Representations of $\A_0$ correspond to covariant representations of $\frD$ and $\G$. In particular, any representation $\a$ of $\frD$ can be combined with $\l$ to form a covariant representation on $H_\a\otimes \ell^2(\G)$ as in the paragraph prior to~\cite[Def. 4.1.4]{BroOza08}, and this family of examples is faithful on $\A_0$.  Because of these examples, and often many others as well, the maximal C*-seminorm on $\A_0$ is a norm.  The full C*-crossed product $\A$ is the resulting completion.  We henceforth identify $\frD$ with a C*-subalgebra of $\A$ through the map $d \mapsto d\otimes \delta_e$.  In the reverse direction, we write $a_g := a(\cdot,g)$ when $a \in \A_0$ and $g \in \G$.  Similarly, we identify $\G$ with the subset $\{1_\frD\otimes \delta_g:\ g \in \G\}$ of $\A_0$ .

Moreover, if $a \in \A_0$ and $\rho$ is the representation of $\A_0$ constructed from $\a$ and $\l$, then the compression of $\rho(a)$ to $H_\a\otimes \delta_e$ is unitarily equivalent to $\a(a_e)$.  Since we can choose $\a$ to be faithful, this shows that the map $a \mapsto a_e$ extends to a conditional expectation $\Psi_\frD:\A\to \frD$. 

An \textbf{invariant random order} is a $\G$-invariant Borel probability measure on $\Omega$.  For example, if $<$ is a left-invariant total order on $\G$, then $\delta_<$ is an invariant random order.  An arbitrary countable group $\G$ may not have a left-invariant total order, but it does always have an invariant random order via the following construction.

\begin{ex}\label{ex:Bern}
Let $\ol{\Omega} := [0,1]^\G$, and let $\ol{\mu}$ be the product measure under which each coordinate is uniformly distributed. Let $\G$ act by left-translation: if $\ol{\omega} = (\ol{\omega}_h)_{h \in \G}$, then $g\cdot \ol{\omega} =  (\ol{\omega}_{g^{-1}h})_{h \in \G}$.  Now define a $\G$-equivariant map $\ol{\omega}\mapsto <_{\ol{\omega}}$ by
\[g <_{\ol{\omega}} h \qquad \Leftrightarrow \qquad \ol{\omega}_g < \ol{\omega}_h.\]
If $\ol{\omega}$ is drawn at random from $\ol{\mu}$, then $<_{\ol{\omega}}$ is almost surely a total order because the event $\{\ol{\omega}_g = \ol{\omega}_h\}$ is negligible for any distinct $g$ and $h$.  Its distribution on $\Omega$ is an invariant random order called the \textbf{Bernoulli random order} on $\G$.  It is also characterized by the property that it orders any finite subset of $\G$ uniformly at random.  This example is discussed more fully in~\cite[Sec. 7]{Seward--randomorder}. \qed 
\end{ex}

Fix an invariant random order $\mu$ for the rest of this section.  We can use it to construct another family of representations of $\A_0$ as follows.  First, for any Hilbert space $H$, let $\rmM_k$ act on $H^{\oplus k}$ by following the rules of matrix-vector multiplication, as previously. Now let $\pi$ be a representation of $\G$, and define a representation $\Pi$ of $\A_0$ on $L^2(\mu;H_\pi^{\oplus k})$ like this:
\begin{multline*}
[\Pi(a)F](\omega) := \sum_h a(\omega,h)\pi^{\oplus k}(h)\big(F(h^{-1}\cdot \omega)\big) \\ (a \in \A_0,\ F \in L^2(\mu;H_\pi^{\oplus k}),\ \omega \in \Omega).
\end{multline*}
We continue to write $\Pi$ for the extension of this representation to $\A$.

Let $\kappa$ be the Koopman representation of $\G$ on $L^2(\mu)$~\cite[Sec. II.10]{KecGAEGA}.  Then the definition above gives $\Pi|\G \simeq \kappa \otimes \pi^{\oplus k}$ (see~\cite[Sec. 13.1]{Dix--Cstar} for tensor products of group representations).  On the other hand, $\Pi|\frD$ is the action of $\frD$ on $L^2(\mu;H_\pi^{\oplus k})$ by pointwise multiplication.

Notice that $(\Pi|\G)^{1_\Omega \otimes H_\pi^{\oplus k}} \simeq \pi^{\oplus k}$.  Since $\pi$ is arbitrary, this shows that our copy of $\G$ inside $\A$ generates an isomorphic copy of the whole of $C^\ast \G$, which we henceforth identify with $C^\ast \G$ itself.  In addition, if we define $\Psi_\G:\A_0 \to \rmM_k[\G]$ by
\begin{equation}\label{eq:CE2}
\Psi_\G(a)(g) := \int a_g(\omega)\ d\mu(\omega) \qquad (a \in \A_0,\ g \in \G),
\end{equation}
then the compression of $\Pi(a)$ to $1_\Omega \otimes H_\pi^{\oplus k} \cong H_\pi^{\oplus k}$ is given by $\pi^{(k)}(\Psi_\G(a))$.  Again since $\pi$ is arbitrary, $\Psi_\G$ extends to a conditional expectation from $\A$ to $\rmM_k(C^\ast \G)$.

Let $\L$ be the representation of $\A$ constructed as above when $\pi := \l$, and let ${\M := \L(\A)''}$ and $\N := \L(\frD)''$.  The algebra $\A$ has a tracial state uniquely determined by the equation
\begin{equation}\label{eq:trace-on-A0}
\tau(a) = \int \tr_k(a_e(\omega))\ d\mu(\omega) = \frac{1}{k}\sum_i \langle \L(a)(1_\Omega\otimes \xi_i),1_\Omega\otimes \xi_i\rangle \qquad (a \in \A_0),
\end{equation}
where $\xi_i$ is as in~\eqref{eq:xi-i}.  The first of these formulas shows that $\tau\circ \Psi_\G = \tau \circ \Psi_\frD = \tau$, and the second shows that $\tau$ is $\L$-normal.  Its extension $\t{\tau}$ is a faithful normal tracial state on $\M$.  Letting $\t{\Psi}$ be Umegaki's $\t{\tau}$-preserving conditional expectation from $\M$ onto $\N$~\cite[Lem. 1.5.11]{BroOza08}, another check against~\eqref{eq:trace-on-A0} shows that $\t{\Psi}\circ \L = \L\circ  \Psi_\frD$. 


\subsection{A subdiagonal subalgebra}

Let $\A_\ge$ be the subset of all $a \in \A_0$ that satisfy
\begin{equation}\label{eq:finite-sums-2}
a(\omega,g) = 0 \qquad \hbox{whenever} \qquad g >_\omega e.
\end{equation}
Let $\A_>$ be the further subset of those $a \in \A_\ge$ which also satisfy $a_e = 0$.  Then $\A_\ge \supset \frD$, but $\A_> \cap \frD = \{0\}$.  By comparing~\eqref{eq:order-equiv} with the involution in~\eqref{eq:twisted}, $\A_\ge^\ast$ consists of those $a \in \A_0$ that satisfy $a(\omega,g) = 0$ whenever $g <_\omega e$.

\begin{lem}\label{lem:subalg}
The set $\A_\ge$ is a unital subalgebra of $\A$.
\end{lem}

\begin{proof}
First, $\A_\ge$ is a linear space by construction, and it contains the unit $1_\frD \otimes \delta_e$.  To show that $\A_\ge$ is an algebra, let $a,b \in \A_\ge$, and consider the formula for $a\ast b$ in~\eqref{eq:twisted}.  Suppose that $g >_\omega e$.  Then there are two possibilities for each summand in that formula:
\begin{itemize}
\item If $h >_\omega e$, then $a(\omega,h) = 0$ is zero and the summand vanishes.
\item If $e \ge_\omega h$, then $g >_\omega h$ by transitivity, and hence $h^{-1}g >_{h^{-1}\cdot \omega} e$ by~\eqref{eq:order-equiv}.  This implies that $b(h^{-1}\cdot \omega,h^{-1}g) = 0$, and again the summand vanishes.
\end{itemize}
Therefore $a\ast b$ also belongs to $\A_\ge$.
\end{proof}

We can now make contact with a subdiagonal subalgebra.  The next lemma generalizes the results of~\cite[Subsec. 3.2]{Arv67} `relative to $\Omega$'.

\begin{lem}\label{lem:subdiag}
The image $\L(\A_\ge)$ is a subdiagonal subalgebra of $\M$ relative to $\t{\Psi}$, and $\L(\A_>) = \L(\A_\ge) \cap \ker \t{\Psi}$. 
\end{lem}

\begin{proof}
The image $\L(\A_\ge)$ is a unital subalgebra of $\M$ by Lemma~\ref{lem:subalg}, so we need only verify the first three of Arveson's axioms from~\cite[Def. 2.1.1]{Arv67}:
\begin{itemize}
\item[i.] Any element $a$ of $\A_0$ may be decomposed according to
\[a(\omega,g) = a(\omega,g)1_{\{g \le_\omega e\}} + a(\omega,g) 1_{\{g >_\omega e\}},\]
and both summands are still elements of $\A_0$.  Therefore $\A_\ge + \A_\ge^\ast$ equals the whole of $\A_0$.  This is norm dense in $\A$ by construction, and $\L(\A)$ is ultraweakly dense in $\M$~\cite[Thm. I.3.2]{Dix--vN}.
\item[ii.] If $a,b \in \A_{\ge}$, then the multiplication from~\eqref{eq:twisted} gives
\[(a\ast b)_e(\omega) = \sum_h a(\omega,h)b(h^{-1}\cdot \omega,h^{-1}).\]
Consider the summands on the right.  If $h >_\omega e$, then $a(\omega,h) = 0$.  On the other hand, if $h <_\omega e$, then~\eqref{eq:order-equiv} gives $e <_{h^{-1}\cdot \omega} h^{-1}$, and hence ${b(h^{-1}\cdot \omega,h^{-1}) = 0}$.  So all terms with $h\ne e$ vanish, and we are left with
\[\t{\Psi}(\L(a)\L(b)) = \L((a\ast b)_e) = \L(a_e)\L(b_e) = \t{\Psi}(\L(a))\t{\Psi}(\L(b)).\]
\item[iii.] We have $\t{\Psi}(\L(\A_\ge)) = \L(\Psi_\frD(\A_\ge)) = \L(\frD) \subset \L(\A_\ge) \cap \L(\A_\ge)^\ast$.
\end{itemize}
The final equality also follows from the fact that $\t{\Psi}\circ \L = \L\circ  \Psi_\frD$.
\end{proof}

\subsection{Wandering vectors and completion of the proof}

We prove Theorem~\ref{mainthm:past} via the more abstract reformulation in Theorem~\ref{thm:subdiag-Szego} below.  This is a version of Schur's determinantal formula~\cite[Subs. 0.8.5]{HorJohMA} for a positive functional on $\A$.  Because we have allowed $\frD$ to consist of $\rmM_k$-valued functions from the outset, we obtain a result for $\rmM_k$-valued completely positive maps as well via the pairing isomorphism.

We begin with the following classical notion.  In a representation $\Pi$ of $\A$, a vector $y$ in $H_\Pi$ is called \textbf{wandering} if $y$ is orthogonal to $\Pi(\A_>)y$ (compare~\cite[Sec. 4.1]{Arv67}, which uses the term `right-wandering').  In this case the following also hold:
\begin{align*}
\langle \Pi(d)y,\Pi(a)y\rangle &= \langle y,\Pi(d^\ast a)y\rangle = 0\qquad (d \in \frD,\ a \in \A_>);\\
\langle \Pi(d)y,\Pi(a^\ast)y\rangle &= \langle \Pi(ad)y,y\rangle = 0\qquad (d \in \frD,\ a \in \A_>);\\
\langle \Pi(a_1^\ast)y,\Pi(a_2)y\rangle &= \langle y,\Pi(a_1a_2)y\rangle = 0 \qquad (a_1,a_2 \in \A_>).
\end{align*}
Therefore the subspaces $\ol{\Pi(\A_>^\ast)y}$, $\ol{\Pi(\frD)y}$ and $\ol{\Pi(\A_>)y}$ are orthogonal.

Any $b \in \A_\ge + \A_\ge^\ast$ may be expressed as $\Psi_\frD(b) + a_1 + a_2^\ast$ with $a_1,a_2 \in \A_>$.  Using this decomposition and the fact that $\A_\ge + \A_\ge^\ast$ is norm dense in $\A$, it follows that a vector $y$ is wandering if and only if $\Phi_y^\Pi = \Phi_y^\Pi \circ \Psi_\frD$.

Now consider an arbitrary vector $x$ in $H_\Pi$.  We define its \textbf{wandering part} $y$ to be the component of $x$ orthogonal to the subspace $\ol{\Pi(\A_>)x}$.  This $y$ lies in $x + \ol{\Pi(\A_>)x}$, and therefore
\[\Pi(\A_>)y \subset \Pi(\A_>)(x + \ol{\Pi(\A_>)x}) \subset \ol{\Pi(\A_>)x}.\]
This subset is orthogonal to $y$ by construction, so $y$ is indeed wandering.  Finally, if $\phi = \Phi^\Pi_x$, then we call $\Phi^\Pi_y$ the \textbf{Schur complement} of $\phi$ relative to $\A_{\ge}$.   This name is unambiguous because of the uniqueness of the GNS representation of $\phi$.  This is an abstraction of the generalized Schur complement of a submatrix in a larger Gram matrix~\cite[Exers. 7.1.P28, 7.3.P8]{HorJohMA}.

\begin{thm}\label{thm:subdiag-Szego}
In the situation above, we have $\Delta_\tau \phi = \Delta_{\tau|\frD}(\psi|\frD)$.
\end{thm}

If $x$ is already wandering, then $\psi = \phi$, and Theorem~\ref{thm:subdiag-Szego} is an instance of Lemma~\ref{lem:det-ucp} for the conditional expectation $\Psi_\frD$.  We use this special case in the course of proving the full theorem.  The other main ingredient for the proof is the Arveson--Labuschagne Jensen inequality for maximal subdiagonal subalgebras from~\cite{Lab05}.  We use that inequality via the following consequence.

\begin{prop}\label{prop:Jensen}
Every $a \in \A_>$ satisfies $\Delta |1 + a| \ge 1$.
\end{prop}

\begin{proof}
By Lemma~\ref{lem:subdiag} and~\cite[Thm. 2.2.1]{Arv67}, $\L(\A_\ge)$ is contained in a unique maximal subdiagonal subalgebra of $\M$.  This provides the correct setting for the Arveson--Labuschagne version of Jensen's formula~\cite[Thm. 3]{Lab05}.  That formula gives us the inequality step here:
\[\Delta |1 + a| = \Delta_{\t{\tau}} |\L(1 + a)| \ge \Delta_{\t{\tau}} |\t{\Psi}(\L(1 + a))| = \Delta_{\t{\tau}}|\L(1 + \Psi_\frD(a))|  = 1.\]
\end{proof}

\begin{cor}\label{cor:shear}
Let $\Pi$ be a representation of $\A$, let $x \in H_\Pi$, and let $w$ lie in $x + \ol{\Pi(\A_>)x}$.  Then $\Delta \Phi^\Pi_w \ge \Delta \Phi^\Pi_x$.
\end{cor}

\begin{proof}
Choose a sequence $(a_n)_{n\ge 1}$ in $\A_>$ so that $w_n := (1 + \Pi(a_n))x \to w$ as $n\to\infty$.  Then $\Phi^\Pi_{w_n} = \Phi^\Pi_x((1+a_n)^\ast(\cdot)(1+a_n))$, and by Lemma~\ref{lem:unif-cts} these converge to $\Phi^\Pi_w$.  On the other hand, Propositions~\ref{prop:det-props}(e) and then~\ref{prop:Jensen} give
\[\Delta \Phi^\Pi_{w_n}= (\Delta|1 + a_n|)^2\cdot \Delta \Phi^\Pi_x  \ge \Delta \Phi^\Pi_x.\]
As $n\to\infty$ the result follows by upper semicontinuity (Proposition~\ref{prop:det-props}(c)).
\end{proof}

\begin{proof}[Proof of Theorem~\ref{thm:subdiag-Szego}]
Abbreviate $\Delta_\tau$ to $\Delta$ for the rest of this proof. Corollary~\ref{cor:shear} applies to the wandering part $y$ of $x$ to give $\Delta \psi \ge \Delta \phi$.

Now let $K_<:= \ol{\Pi(\A^\ast_>)y}$, $K_0 := \ol{\Pi(\frD)y}$ and $K_> := \ol{\Pi(\A_>)y}$.  Since $y$ is wandering, $\ol{\Pi(\A)y}$ is the orthogonal sum of $K_<$, $K_0$ and $K_>$.  If it happens that $x$ lies in $y + K_>$, then we can swap the roles of $x$ and $y$ in Corollary~\ref{cor:shear} to complete the proof.  This condition on $x$ may fail in general, but we can adapt the argument as follows.

Let $z$ be the orthogonal projection of $x$ onto $\ol{\Pi(\A)y}$.  This is the sum of the projections of $x$ onto $K_<$, $K_0$ and $K_>$, because those subspaces are orthogonal.  Now, the definition of $y$ gives
\begin{align*}
\langle \Pi(a^\ast)y,x\rangle &= \langle y,\Pi(a)x\rangle = 0 \qquad (a \in \A_>)\\
\hbox{and} \qquad \langle \Pi(d)y,(x-y)\rangle &= \langle y,\Pi(d^\ast)(x-y)\rangle = 0 \qquad (d \in \frD).
\end{align*}
So $x$ is orthogonal to $K_<$, and its projection onto $K_0$ is equal to $y$ itself.  It follows that $z \in y + K_>$, and so Corollary~\ref{cor:shear} gives $\Delta \Phi^\Pi_z \ge \Delta \psi \ge \Delta \phi$.  On the other hand, $z$ is the projection of $x$ onto a subrepresentation of $\Pi$, so $\Phi^\Pi_z \le \phi$ in the positive definite ordering (as we saw in the construction of the Lebesgue decomposition, for example).  Because of this, Proposition~\ref{prop:det-props}(a) gives $\Delta\Phi^\Pi_z \le \Delta \phi$.  So in fact we must have $\Delta \Phi^\Pi_z = \Delta \psi = \Delta \phi$.

Finally, since $y$ is wandering, Lemma~\ref{lem:det-ucp} gives $\Delta \psi = \Delta_{\tau|\frD}(\psi|\frD)$.
\end{proof}

It remains to deduce Theorem~\ref{mainthm:past} from Theorem~\ref{thm:subdiag-Szego}.  Let $\phi:\G\to \rmM_k$ be positive definite, and write $\phi$ also for its extension to a completely positive map on $C^\ast \G$.  Let $\phi$ be associated to $\pi$ by the cyclic tuple $x_1$, \dots, $x_k \in H_\pi$.

For each $\omega \in \Omega$, define
\begin{equation}\label{eq:local-past}
N_\omega:= \ol{\rm{span}}\{\pi(g)x_i:\ g <_\omega e,\ i=1,\dots,k\} \qquad (\omega \in \Omega).
\end{equation}
These are the measurably-varying subspaces that appear in the statement of Theorem~\ref{mainthm:past}.  They form a measurable field of closed subspace of $H_\pi$ (see~\cite[Chap. II.1]{Dix--vN}).  Let $R_\omega$ be the orthogonal projection from $H_\pi$ to $N_\omega$.

\begin{proof}[Proof of Theorem~\ref{mainthm:past}]
Let us start by re-writing~\eqref{eq:past} like this:
\begin{equation}\label{eq:past2}
\exp\frac{1}{k}\int \log \det [\langle R_\omega^\perp x_j,R_\omega^\perp x_i\rangle]\ d\mu(\omega) = (\Delta_\chi \phi)^{1/k}
\end{equation}
By~\eqref{eq:det-phi-det-pairing}, the right-hand side of~\eqref{eq:past2} is equal to $\Delta_{\chi\otimes \tr_k}(\langle \phi,\cdot\rangle)$, and now by~\eqref{eq:trace-on-A0} and the second part of Lemma~\ref{lem:det-ucp} this is equal to $\Delta_\tau \phi'$, where the positive functional $\phi' := \langle \phi,\cdot \rangle\circ \Psi_\G$ on $\A$ is associated to $\Pi$ by the vector
\[x := 1_\Omega \otimes k^{-1/2}[x_1,\dots,x_k]^\rm{T}.\]

Next, by Corollary~\ref{thm:subdiag-Szego}, $\Delta_\tau \phi'$ is equal to $\Delta_{\tau|\frD}(\psi|\frD)$, where $\psi$ is the Schur complement of $\phi'$.  It remains to evaluate $\psi|\frD$ and show that $\Delta_{\tau|\frD}(\psi|\frD)$ is given by the left-hand side of~\eqref{eq:past2}.

The wandering part $y = [y_1,\dots,y_k]^\rm{T}$ is the component of $x$ orthogonal to the closure of the subspace $\Pi(\A_>)x$. This subspace consists of the functions in $L^2(\mu;H^{\oplus k}_\pi)$ that have the form
\[\sum_ga(\omega,g)[\pi(g)x_1,\dots,\pi(g)x_k]^\rm{T} \qquad (\omega \in \Omega)\]
for some $a \in \A_>$.  Because of the property~\eqref{eq:finite-sums-2}, the sum above is a vector in $N^{\oplus k}_\omega$.  On the other hand, we can regard $a$ as a $k$-by-$k$ matrix of complex-valued functions on $\Omega\times \G$, and choose each of those functions independently subject to~\eqref{eq:finite-sums-2}.   Therefore a measurable selection argument followed by an approximation by continuous functions shows that
\[\ol{\Pi(\A_>)x} = \int^\oplus_\Omega N_\omega^{\oplus k}\ d\mu(\omega),\]
understanding this direct integral as a closed subspace of $L^2(\mu;H_\pi^{\oplus k})$ (see~\cite[Chap. II.1]{Dix--vN}). Consequently, $y_i(\omega) = k^{-1/2}R_\omega^\perp x_i$ as elements of $L^2(\mu;H_\pi)$, and so
\[\psi(d) = \langle \Pi(d)y,y\rangle = \frac{1}{k}\sum_{i,j}\int d_{ij}(\omega)\langle R_\omega^\perp x_j,R_\omega^\perp x_i\rangle\ d\mu(\omega) \qquad (d \in \frD).\]
Finally, the analysis from Example~\ref{ex:mat-val-fn} shows that $\Delta_{\tau|\frD}(\psi|\frD)$ is equal to the left-hand side of~\eqref{eq:past2}.
\end{proof}

\subsection{Further remarks}\label{subs:B-rmks}

Non-self-adjoint subalgebras of C*-algebras similar to our $\A_\ge$ have been studied by several authors.  In particular, Kawamura and Tomiyama introduced C*-subdiagonal subalgebras in~\cite[Sec. 3]{KawTom77}.  Our example $\A_\ge$ is a C*-subdiagonal subalgebra of $\A$ when $\G$ is amenable.  When $\G$ is non-amenable, this fails because the conditional expectation $\Psi_\frD$ is not faithful, and so the full and reduced C*-crossed products differ (compare~\cite[Cor. 5.6.17]{BroOza08}).   We do not need the faithfulness of $\Psi_\frD$ here, so have introduced our examples from scratch. Our examples also belong to a more general family of subalgebras of groupoid C*-algebras for totally ordered groupoids: see~\cite{MuhSol89}, which also includes background and further references on groupoid C*-algebras.  Indeed, besides technicalities around faithfulness, our Lemma~\ref{lem:subdiag} is mostly the same as the forward direction in~\cite[Thm. 4.2]{MuhSol89}.  It could be interesting to generalize our results to the broader setting suggested by that paper.

Fuglede--Kadison determinants of matrices over group von Neumann algebras appear in connections with various other parts of mathematics.  For example, connections to $L^2$-invariants in topology are described in~\cite[Sec. 3.2 and Chap. 13]{Luc02}.  See that reference and also~\cite{KirKreLuc25} for an overview of open problems in this direction, such as L\"{u}ck's approximation and determinant conjectures.  The latter is generalized to the `measure-theoretic determinant conjecture' in~\cite{LucSauWeg10}, and shown to have consequences for uniform measure equivalence of discrete groups. I expect that Theorem~\ref{mainthm:past} generalizes in a similar way, for instance by using a version of Example~\ref{ex:Bern} over a unimodular random network.

\begin{prob}
Does the construction of a subdiagonal subalgebra from an invariant random order shed any light on these open questions about $L^2$-invariants?
\end{prob}

Hayes' paper~\cite{Hayes21b} studies certain probability-preserving systems of algebraic origin for a finitely generated group $\G$ with a left invariant total order.  They are constructed from elements of $\bbZ[\G]$ that he calls `lopsided', which are defined in terms of the order.  His main result is that a nondegenerate lopsided group-ring element always gives a factor of a Bernoulli shift.

\begin{prob}
Can coupling to an invariant random order generalize Hayes' construction and result to other finitely generated groups?
\end{prob}

\section{Approximate association and almost periodic sequences}\label{sec:approx-assoc}

This section makes more preparations for our introduction of `almost periodic entropy', the new notion that appears in Theorem~\ref{mainthm:det-form}.

\subsection{Typical vectors and approximate association}\label{subs:typ-vec}

Consider again a general separable, unital C*-algebra $\A$. We can classify tuples of vectors in a representation $\pi$ of $\A$ according to their type.

\begin{dfn}\label{dfn:typical}
For any positive integer $k$ and subset $O$ of $\frL(\A,\rmM_k)$, let
\[ \X(\pi,O) := \big\{[v_1,\dots,v_k]^\rm{T} \in H_\pi^{\oplus k} :\ \Phi^\pi_{v_1,\dots,v_k} \in O \big\}. \]
The elements of $\X(\pi,O)$ are the \textbf{$O$-typical} tuples of the representation $\pi$.  In addition, let
\[\S_k(\pi) := \big\{\Phi^\pi_{v_1,\dots,v_k}:\ v_1,\dots,v_k \in H_\pi\ \hbox{and}\ \|v_1\|^2 + \cdots + \|v_k\|^2 = k\big\}.\]
This is the subset of elements of $\S_k(\A)$ that are associated to $\pi$.
\end{dfn}

The transpose in the definition of $\X(\pi,O)$ is not conceptually significant, but it simplifies some manipulations later.

If $O \subset \S_k(\A)$, observe that
\begin{equation}\label{eq:meet-and-nonempty}
O\cap \S_k(\pi) \ne \emptyset \quad \Leftrightarrow \quad \X(\pi,O) \ne \emptyset
\end{equation}
for any representation $\pi$.

We often use Definition~\ref{dfn:typical} when $O$ is a small neighbourhood of a given `target' completely positive map $\phi$.  In this case we may informally describe elements of $\X(\pi,O)$ as `approximately $\phi$-typical tuples'.  This resembles the use of terms such as `microstate' in free probability or `good model' in the study of sofic entropy in ergodic theory: compare~\cite[Sec. 2.3]{Bowen--survey}, for example.

Let $\phi \in \frL(\A,\rmM_k)_+$, let $O$ be a neighbourhood of $\phi$, and let $\t{O}$ be the corresponding neighbourhood of $\langle \phi,\cdot\rangle$ under the pairing isomorphism.  Then Lemma~\ref{lem:dilation-matrix-dilation} tells us that
\begin{equation}\label{eq:X-and-X}
	\X(\pi^{(k)},\t{O}) = k^{-1/2}\X(\pi,O).
\end{equation}

If $\pi$ is a representation and $\phi \in \frL(\A,\rmM_k)_+$, then $\phi$ is \textbf{approximately associated} to $\pi$ if it lies in the closure of the set of completely positive maps associated to $\pi$. Because of the second countability from Lemma~\ref{lem:lcsc}, this holds if and only if there is a sequence of $k$-tuples $[v_{n,1}, \dots, v_{n,k}]$ in $H_\pi$ such that
\begin{equation}\label{eq:approx-assoc-conv}
\Phi^\pi_{v_{n,1},\dots,v_{n,k}}(a) \to \phi(a) \qquad \hbox{for every}\ a\in \A\ \hbox{as}\ n\to\infty.
\end{equation}

If $\phi$ is approximately associated to $\pi$ and also normalized, and if the tuples $v_{n,1}$, \dots, $v_{n,k}$ witness the convergence in~\eqref{eq:approx-assoc-conv}, then we can normalize those tuples slightly to show that $\phi$ actually lies in the weak$^\ast$ closure $\ol{\S_k(\pi)}$. Let us call $\ol{\S_k(\pi)}$ the \textbf{$k$-summary} of $\pi$.

The literature makes more use of a slightly coarser notion: $\phi$ is \textbf{weakly associated} to $\pi$ if $\phi$ can be approximated by \emph{convex combinations} of maps associated to $\pi$.  When $k=1$, this is the basis for Godemont and Fell's relation of weak containment of representations~\cite[Sec. 3.4]{Dix--Cstar}.  One can define a similar but finer relation on representations as the inclusion of $k$-summaries for all $k$, without allowing convex combinations.  Some references call this `weak containment in the sense of Zimmer' because of its appearance in~\cite[Def. 7.3.5]{Zim84}.  However, in that reference Zimmer himself attributes the idea to Fell.  While I have not found this precise definition in Fell's papers, it is suggested rather naturally by his study of the quotient topology in~\cite{Fel60b,Fel62}.  Moreover, by results of Voiculescu~\cite{Voi76} (see also~\cite{Arv77}),  two separable representations have equal $k$-summaries for all $k$ if and only if they are approximately unitarily equivalent.  I have chosen the term `approximate association' to reflect this last connection. 

Approximate association can sometimes detect multiplicities in the GNS representation of a positive functional, whereas weak association cannot.

\begin{ex}\label{ex:sigma-not-convex}
Let $\pi$ be an irreducible representation whose dimension is finite but at least $2$.  Let $x$ and $y$ be linearly independent unit vectors in $H_\pi$, and let $\phi:= \Phi^\pi_x$ and $\psi := \Phi^\pi_y$.  These are both associated to $\pi$, and they are linearly independent by the uniqueness of the GNS construction and the irreducibility of $\pi$.  However, $(\phi + \psi)/2$ cannot be associated to $\pi$, because it is not pure, and so its GNS representation must be the whole of $\pi^{\oplus 2}$.  Moreover, these facts persist if we require only approximate association, because the finite dimensionality of $\pi$ implies that $\S_1(\pi)$ and $\S_1(\pi^{\oplus 2})$ are already compact sets, without taking closures.  So $\S_1(\pi)$ is not convex in this example.  On the other hand, $(\phi + \psi)/2$ is certainly weakly associated to $\pi$. \fin
\end{ex}

Our notion of almost periodic entropy (Definition~\ref{dfn:APent} below) involves approximations to a positive functional, but it does not allow for taking convex combinations.  For this reason, approximate association plays a larger role than weak association in the present paper.

We next compare typical tuples for two completely positive maps if they are related by association or approximate association.  First, fix an $\ell$-by-$k$ matrix $[a_{ij}]$ of elements of $\A$.  If $\pi$ is a representation and ${v_1, \dots, v_k} \in H_\pi$, recall that we can define a new $\ell$-tuple ${y_1, \dots, y_\ell}$ in $H_\pi$ using $[a_{ij}]$ and ${v_1, \dots, v_k}$ as in formula~\eqref{eq:matrix-vector}.  The resulting type of ${y_1, \dots, y_\ell}$ is then related to the type of ${v_1, \dots, v_k}$ by Lemma~\ref{lem:new-type}.  Because of the weak$^\ast$ continuity given by that lemma, we obtain the following.

\begin{lem}\label{lem:lin-maps}
	Let $\psi$ be obtained from $\phi$ and $[a_{ij}]$ as in Lemma~\ref{lem:new-type}. For any neighbourhood $O$ of $\psi$, there is a neighbourhood $U$ of $\phi$ such that
\[\big\{[\pi(a_{ij})]\cdot[v_1,\dots,v_k]^\rm{T}:\ [v_1,\dots,v_k]^\rm{T} \in \X(\pi,U)\big\} \subset \X(\pi,O)\]
for any representation $\pi$.

In particular, suppose that
\[\psi(b) = (Q^\rm{T})^\ast \phi(b) Q^\rm{T} \qquad (b \in \A)\]
for some $Q \in \rmM_{\ell,k}$, as in~\eqref{eq:psi-Q-phi}.  Then, for any neighbourhood $O$ of $\psi$, there is a neighbourhood $U$ of $\phi$ such that
\[(I_{H_\pi}\otimes Q)[\X(\pi,U)] \subset \X(\pi,O)\]
for any representation $\pi$ (identifying $H_\pi^{\oplus k}$ with $H_\pi\otimes \bbC^{\oplus k}$ as in~\eqref{eq:psi-Q-phi}). \qed
\end{lem}

\begin{cor}\label{cor:typ-trans}
	Let $\phi \in \frL(\A,\rmM_k)_+$ and $\psi \in \frL(\A,\rmM_\ell)_+$, and assume that $\psi$ is approximately associated to $\pi_\phi$.  Then for any neighbourhood $O$ of $\psi$ there is a neighbourhood $U$ of $\phi$ such that
	\[\X(\pi,U)\ne \emptyset \quad \Rightarrow \quad \X(\pi,O) \ne \emptyset.\]
	for any representation $\pi$.
\end{cor}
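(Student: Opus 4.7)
The plan is to combine approximate association of $\psi$ with $\pi_\phi$, the cyclicity of the canonical tuple associating $\phi$ to $\pi_\phi$, and Lemma~\ref{lem:lin-maps}. The idea is to realize $\psi$, up to small error, as the type of a tuple obtained from that canonical tuple by applying a fixed matrix $a \in \rmM_{\ell,k}(\A)$. Once this is done the relationship becomes a purely algebraic identity in $\rmM_{\ell,k}(\A)$ that transfers to every representation $\pi$, and Lemma~\ref{lem:lin-maps} converts it directly into the conclusion.

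First I would let $v_1,\dots,v_k$ be the cyclic tuple in $H_{\pi_\phi}$ associating $\phi$ to $\pi_\phi$. Since $\psi$ is approximately associated to $\pi_\phi$, I can pick an $\ell$-tuple $y_1,\dots,y_\ell$ in $H_{\pi_\phi}$ whose type lies in the given neighbourhood $U$ of $\psi$. By cyclicity, the finite sums $\sum_j \pi_\phi(a_{ij})v_j$ with $a_{ij}\in \A$ are dense in $H_{\pi_\phi}$, so each $y_i$ can be approximated in norm by such a sum. Using the uniform continuity of the type map on bounded sets (Lemma~\ref{lem:unif-cts}), I can make the errors small enough that the perturbed tuple
\[
y'_i := \sum_{j=1}^k \pi_\phi(a_{ij})v_j \qquad (i = 1,\dots,\ell)
\]
has type $\psi'' := \Phi^{\pi_\phi}_{y'_1,\dots,y'_\ell}$ still lying in $U$.

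By construction $[y'_1,\dots,y'_\ell]^\rm{T} = (\pi_\phi)_{(\ell,k)}(a)[v_1,\dots,v_k]^\rm{T}$ with $a := [a_{ij}] \in \rmM_{\ell,k}(\A)$, so Lemma~\ref{lem:Ad-formula} implies that $\phi$ and $\psi''$ are related through $a$ as in~\eqref{eq:psi-phi}. With this $a$ fixed, I would apply Lemma~\ref{lem:lin-maps} to the pair $(\phi,\psi'')$ with target neighbourhood $U$ of $\psi''$. This yields a neighbourhood $V$ of $\phi$ such that, for every representation $\pi$,
\[
\pi_{(\ell,k)}(a)\big[\X(\pi,V)\big] \subset \X(\pi,U).
\]
Given any $\pi$ with $\X(\pi,V)\ne\emptyset$, picking a tuple in $\X(\pi,V)$ and applying $\pi_{(\ell,k)}(a)$ to it then produces an element of $\X(\pi,U)$, proving $\X(\pi,U)\ne\emptyset$.

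The main obstacle is purely conceptual rather than computational: one must notice that although the approximation of the $y_i$ by the $y'_i$ takes place inside the specific space $H_{\pi_\phi}$, the resulting relationship~\eqref{eq:psi-phi} between $\phi$ and $\psi''$ is an algebraic identity involving only the fixed matrix $a$, and therefore remains valid when the abstract image of $a$ is computed in any other representation. Once this is recognised, Lemma~\ref{lem:lin-maps} does essentially all the remaining work.
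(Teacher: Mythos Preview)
Your proposal is correct and follows essentially the same route as the paper's proof: use cyclicity of the canonical tuple for $\pi_\phi$ together with Lemma~\ref{lem:unif-cts} to produce $a\in\rmM_{\ell,k}(\A)$ whose image of that tuple has type in $U$, then invoke Lemma~\ref{lem:lin-maps}. The paper compresses your steps 2--4 into a single sentence, but the argument is the same.
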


\begin{proof}
	Let $\phi$ be associated to $\pi_\phi$ by the cyclic tuple $x_1$, \dots, $x_k$.  By cyclicity and Lemma~\ref{lem:unif-cts}, there is some $\ell$-by-$k$ matrix $[a_{ij}]$ of elements of $\A$ such that the tuple defined by
	\[[y_1, \dots, y_\ell]^\rm{T} := [\pi(a_{ij})]\cdot [x_1,\dots,x_k]^\rm{T}\]
	satisfies $\psi' := \Phi^\pi_{y_1,\dots,y_\ell} \in O$.  Now apply Lemma~\ref{lem:lin-maps} to $\phi$, $\psi'$ and $[a_{ij}]$.
\end{proof}

\subsection{Pairs and sums of typical tuples}

Let $k$ and $\ell$ be positive integers, and let
\[K \:= \{1,\dots,k\} \qquad \hbox{and} \qquad L := \{k+1,\dots,k+\ell\}.\]
Let $\phi \in \frL(\A,\rmM_k)_+$ and $\psi \in \frL(\A,\rmM_\ell)_+$. The next lemma is a robust form of Lemma~\ref{lem:disjoint-no-join-0}.

\begin{lem}\label{lem:disjoint-no-join}
If $\phi$ and $\psi$ are disjoint, then for every neighbourhood $O$ of $\rm{diag}(\phi,\psi)$ there are neighbourhoods $U$ of $\phi
$ and $V$ of $\psi$ such that the following holds:
\begin{quote}
If $\theta \in \frL(\A,\rmM_{k+\ell})_+$ satisfies $\theta[K] \in U$ and $\theta[L] \in V$, then $\theta \in O$.
\end{quote}
These neighbourhoods satisfy
\[\X(\pi,O)\supset \X(\pi,U) \times \X(\pi,V)\]
for any representation $\pi$ (regarding the right-hand side as a subset of $H_\pi^{\oplus(k+\ell)}$).
\end{lem}

\begin{proof}
We prove the first conclusion by contraposition. Assume that $\rm{diag}(\phi,\psi)$ has a neighbourhood $O$ for which no pair of neighbourhoods $U$ and $V$ gives the desired implication.  Then, by the second countability from Lemma~\ref{lem:lcsc}, there is a sequence $(\theta_n)_{n\ge 1}$ in $\frL(\A,\rmM_k)_+\setminus O$ such that
\[\theta_n[K]\to \phi \qquad \hbox{and} \qquad \theta_n[L] \to \psi.\]
This sequence $(\theta_n)_{n\ge 1}$ must be uniformly bounded in the dual norm because of the identity~\eqref{eq:norm-cts} and the relation
\[\tr_{k+\ell}\theta_n(1_\A) = \frac{k}{k+\ell}\tr_k\big(\theta_n(1_\A)[K]\big) + \frac{\ell}{k+\ell}\tr_\ell\big(\theta_n(1_\A)[L]\big).\]
Therefore, by the Banach--Alaoglu theorem, $(\theta_n)_{n\ge 1}$ has a subsequential limit in the weak$^\ast$ topology.  This limit must be a joining of $\phi$ and $\psi$, but also it cannot lie in $O$ and so it must be different from $\rm{diag}(\phi,\psi)$.  Therefore $\phi$ and $\psi$ are not disjoint.

The second conclusion follows from the first one and the definition of type.
\end{proof}

The assumption of disjointness in Lemma~\ref{lem:disjoint-no-join} is not superfluous.  Indeed, if $\pi_\phi$ is finite-dimensional and irreducible, then $\X(\pi,U)$ is nonempty for any neighourhood $U$ of $\phi$, but $\rm{diag}(\phi,\phi)$ is not necessarily approximately associated to $\pi$, only to $\pi^{\oplus 2}$, by reasoning similar to Example~\ref{ex:sigma-not-convex}.

Now we consider typical vectors for the sum $\g := \phi + \psi$.  Here we restrict our attention to the case $k = \ell = 1$.  The case $k=\ell > 1$ would not involve any new ideas, but would require heavier notation.  Later in the paper we use the pairing isomorphism to avoid needing that case.

\begin{lem}\label{lem:split-into-disjoint}
Assume that $k=\ell=1$ and that $\phi$ and $\psi$ are disjoint.  For any neighbourhoods $U$ of $\phi$ and $V$ of $\psi$ there are a neighbourhood $W$ of $\g$ and an element $a$ of $\A$ such that
\[\{(\pi(a)x,\pi(1-a)x):\ x \in \X(\pi,W)\} \subset  \X(\pi,U) \times \X(\pi,V)\]
for any representation $\pi$.
\end{lem}

\begin{proof}
The representations $\pi_\phi$ and $\pi_\psi$ are disjoint by assumption, and they are both contained in $\pi_\g$ (see~\cite[Prop. 2.5.1]{Dix--Cstar}, for example).  Therefore, by the uniqueness of GNS representations, we may identify $\pi_\g$ with ${\pi_\phi\oplus \pi_\psi}$.  The resulting orthogonal projection $P$ from $H_\g$ to $H_\phi$ lies in the centre of $\pi_\g(\A)''$ by Lemma~\ref{lem:central-projection}.

Now suppose that $\g$ is associated to $\pi_\g$ by the cyclic vector $v$.  Then $\phi$ and $\psi$ are associated to $\pi_\g$ by $Pv$ and $v-Pv$, respectively. The Kaplansky density theorem~\cite[Sec. I.3.5]{Dix--vN} applied to $\pi_\g(\A)$ gives an element $a \in \A$ such that $0\le a \le 1$ and such that $\pi_\g(a)v$ lies as close as we wish to $Pv$.  In particular, we may choose $a$ so that
\[\Phi^{\pi_\g}_{\pi_\g(a)v} = \g(a^\ast(\cdot)a) \in U \qquad \hbox{and} \qquad \Phi^{\pi_\g}_{\pi_\g(1-a)v} = \g((1-a)^\ast(\cdot)(1-a)) \in V.\]
Now two applications of Lemma~\ref{lem:lin-maps} produce the required neighbourhood $W$.
\end{proof}

\begin{cor}\label{cor:sums}
Assume that $k=\ell=1$ and that $\phi$ and $\psi$ are disjoint.
\begin{enumerate}
\item[a.] For every neighbourhood $W$ of $\g$ there are neighbourhoods $U$ of $\phi$ and $V$ of $\psi$ such that
\[\X(\pi,W)\supset \X(\pi,U) + \X(\pi,V)\]
for any representation $\pi$.
\item[b.] For any neighbourhoods $U$ of $\phi$ and $V$ of $\psi$ there is a neighbourhood $W$ of $\g$ such that
\[\X(\pi,W) \subset \X(\pi,U) + \X(\pi,V)\]
for any representation $\pi$.
\end{enumerate}
\end{cor}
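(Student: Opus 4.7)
For part (a), I would combine Lemma \ref{lem:pairs}(a) with continuity of the sum operation on tuples. Since $\phi \perp \psi$, Lemma \ref{lem:pairs}(a) provides, for any neighbourhood $O$ of $\theta := \rm{diag}(\phi,\psi)$, neighbourhoods $U_0 \ni \phi$ and $V_0 \ni \psi$ such that in every representation $\pi$ the juxtaposition $[X,Y]$ of any $X \in \X(\pi,U_0)$ and $Y \in \X(\pi,V_0)$ lies in $\X(\pi,O)$. The linear operation $[v_1,\dots,v_{2k}] \mapsto [v_1+v_{k+1},\dots,v_k+v_{2k}]$ is realized by left multiplication by a fixed scalar matrix in $\rmM_{k,2k}(\A)$, and Lemma \ref{lem:lin-maps} lifts it to a continuous map on types that sends $\theta$ to $\gamma$ (the content of Lemma \ref{lem:sums}(a)). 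Given $W \ni \gamma$, I would pull it back through this continuous map to a neighbourhood $O \ni \theta$; then $U := U_0$ and $V := V_0$ serve as the required neighbourhoods, since any $X \in \X(\pi,U)$ and $Y \in \X(\pi,V)$ produce a $2k$-tuple of type in $O$ whose componentwise sum $X+Y$ has type in $W$.

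For part (b) the strategy is to construct the splitting $Z = X + Y$ concretely by acting on $Z$ with elements of $\A$ that approximate a central projection. The proof of Lemma \ref{lem:sums}(b) produces a central projection $P \in \pi_\gamma(\A)' \cap \pi_\gamma(\A)''$ whose range is the copy of $\pi_\phi$ inside $\pi_\gamma$, so that $P\tilde Z$ and $(1-P)\tilde Z$ have types exactly $\phi$ and $\psi$, where $\tilde Z$ is the canonical cyclic tuple of type $\gamma$. By Kaplansky's density theorem there is a sequence $(a_n)$ in $\A_\sa$ with $\|a_n\| \le 1$ and $\pi_\gamma(a_n) \to P$ in the strong operator topology, and Lemma \ref{lem:unif-cts} then makes the $\pi_\gamma$-types of $\pi_\gamma(a_n)\tilde Z$ and $\pi_\gamma(1-a_n)\tilde Z$ converge to $\phi$ and $\psi$ respectively. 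The identity that carries this construction into an arbitrary $\pi$ is
\[\Phi^\pi_{\pi(a) Z}(b) \;=\; \gamma'(a\, b\, a) \qquad (b \in \A),\]
valid for any tuple $Z$ of type $\gamma' = \Phi^\pi_Z$ and self-adjoint $a \in \A$, which depends continuously on $\gamma'$ once $a$ is fixed. I would therefore fix $n$ large enough that the $\pi_\gamma$-types of $\pi_\gamma(a_n)\tilde Z$ and $\pi_\gamma(1-a_n)\tilde Z$ lie in smaller neighbourhoods $U' \subset U$ and $V' \subset V$, then shrink $W$ so that for every $\gamma' \in W$ the functionals $b \mapsto \gamma'(a_n b a_n)$ and $b \mapsto \gamma'((1-a_n) b (1-a_n))$ stay inside $U$ and $V$ on the finitely many test elements defining those neighbourhoods. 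The tuples $X := \pi(a_n) Z$ and $Y := \pi(1-a_n)Z$ then satisfy $X + Y = Z$ together with the required type conditions.

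The principal obstacle is the last point: the natural splitting $PZ + (1-P)Z$ is defined only inside $\pi_\gamma$, whereas the statement demands a splitting inside an arbitrary $\pi$. Pushing $P$ inside $\A$ via Kaplansky density replaces this transfer problem by the routine continuity of a fixed functional in $\gamma'$. A more abstract compactness-and-contradiction approach would instead require transferring approximate decompositions between different representation spaces, for which the cleaner machinery (Fell-type topologies and the approximate-equivalence catalogue) has not yet been developed at this point in the paper.
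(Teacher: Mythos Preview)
Your argument is correct and follows the paper's approach. For part (a) you do exactly what the paper does: combine Lemma~\ref{lem:pairs}(a) with Lemma~\ref{lem:lin-maps} applied to the scalar sum map $\rmM_{k,2k}(\A)$ that carries $\theta$ to $\gamma$. For part (b) the paper's proof is a one-sentence gesture towards Lemma~\ref{lem:sums}(b) and Lemma~\ref{lem:lin-maps}; you have unpacked what that actually requires. The central projection $P$ from the proof of Lemma~\ref{lem:sums}(b) lies only in $\pi_\gamma(\A)''$, so to transport the splitting to an arbitrary $\pi$ one must replace it by an element of $\A$---and your Kaplansky-density step with $a_n \in \A_{\sa}$, $\|a_n\|\le 1$, together with the exact identity $\pi(a_n)Z + \pi(1-a_n)Z = Z$, is precisely the missing ingredient. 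The paper leans on the same manoeuvre implicitly (compare the use of cyclicity in Corollary~\ref{cor:typ-trans}), so your proof and the paper's are the same in substance, with yours being the more honest about the work involved.
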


\begin{proof}
First let $W$ be a neighbourhood of $\g$. Given a representation $\pi$ and vectors $x$, $y \in H_\pi$, observe that
\[\Phi^\pi_{x+y}(a) = \Phi^\pi_x(a) + \Phi^\pi_y(a) + \langle \pi(a)x,y\rangle + \langle \pi(a)y,x\rangle \qquad (a \in \A).\]
We may therefore choose a neighbourhood $O$ of $\rm{diag}(\phi,\psi)$ such that
\[\{x+y:\ [x,y]^\rm{T} \in \X(\pi,O)\} \subset \X(\pi,W)\]
for any representation $\pi$.  Part (a) follows by concatenating this with the last inclusion from Lemma~\ref{lem:disjoint-no-join}.

On the other hand, given the neighbourhoods $U$ of $\phi$ and $V$ of $\psi$, choose $W$ and $a$ as in Lemma~\ref{lem:split-into-disjoint}.  Since
\[x = \pi(a)x + \pi(1-a)x\]
for any $\pi$ and $x \in H_\pi$, the conclusion of Lemma~\ref{lem:split-into-disjoint} represents any element of $\X(\pi,W)$ as an element of $\X(\pi,U) + \X(\pi,V)$.
\end{proof}

\subsection{Strong-quotient convergence of representations}\label{subs:strong-quotient}

For each $k$, the space $\S_k(\A)$ is compact by the Banach--Alaoglu theorem, and metrizable because $\A$ is separable.  Let $\cal{K}(\S_k(\A))$ be its hyperspace (the set of all its compact subsets), and endow this with the Vietoris topology~\cite[Subsec. I.4.F]{Kec95}.  This topology is again compact and metrizable, for instance using a Hausdorff metric constructed from a suitable metric on $\S_k(\A)$~\cite[Thm. 4.26]{Kec95}.  We can therefore describe it in terms of sequences.  Fix $k$, and let $(K_n)_{n\ge 1}$ be a sequence in $\cal{K}(\S_k(\A))$. Its \textbf{topological upper} and \textbf{lower limits} are the sets
\begin{multline*}
\rm{T}\limsup_n K_n := \{\phi \in \S_k(\A):\ \hbox{every neighbourhood of $\phi$}\\ \hbox{meets $K_n$ for infinitely many $n$}\}
\end{multline*}
and
\begin{multline*}
\rm{T}\liminf_n K_n := \{\phi \in \S_k(\A):\ \hbox{every neighbourhood of $\phi$}\\ \hbox{meets $K_n$ for all sufficiently large $n$}\},
\end{multline*}
respectively.  Then $(K_n)_{n\ge 1}$ Vietoris converges if and only if its topological upper and lower limits are equal, and in this case that common set is $\lim_n K_n$.

If $(\pi_n)_{n\ge 1}$ is a sequence of representations, then we say that it \textbf{strong-quotient converges} if the $k$-summaries $\ol{\S_k(\pi_n)}$ Vietoris converge to a limit in $\cal{K}(\S_k(\A))$ as $n\to\infty$ for every $k$.  More abstractly, for any family $R$ of either separable representations or equivalence classes of such representations, we define the \textbf{strong-quotient topology} on $R$ to be the topology generated by the sequence of maps $\pi \mapsto \ol{\S_k(\pi})$ into the spaces $\cal{K}(\S_k(\A))$.

As far as I know, this topology was first studied by Ab\'ert and Elek in~\cite{AbeEle11}, but without this terminology.  As inspiration they cite the notion of `local-global convergence' for sequencess of sparse finite graphs~\cite{BolRio11,HatamiLovSze14}.  Ab\'ert and Elek focused on the analogous mode of convergence for measure-preserving group actions, but their paper indicates the story for unitary representations as well.  I use the term `strong-quotient' because convergence in this topology strengthens both `strong convergence' of representations, as surveyed in~\cite{Magee--survey}, and also convergence in Fell's `quotient topology'~\cite[Sec. 3]{Fel60b}.

By Voiculescu's results from~\cite{Voi76}, the strong-quotient topology is actually pulled back from a topology on approximate unitary equivalence classes of separable representations, and these are parametrized by their images in $\prod_{k\ge 1}\cal{K}(\S_k(\A))$.  According to the C*-variant of~\cite[Thm. 1]{AbeEle11}, the subset of all such images of representations is closed, and hence compact.  We do not depend on this fact in the sequel, but if desired it provides an interpretation of sequential strong-quotient limits as representations themselves, not just elements of $\prod_{k\ge 1}\cal{K}(\S_k(\A))$.

The next definition is a variant of approximate association.

\begin{dfn}\label{dfn:asymp-assoc}
A map $\phi \in \frL(\A,\rmM_k)_+$ is \textbf{asymptotically associated} to a sequence $(\pi_n)_{n\ge 1}$ of separable representations if $\phi$ lies in $\rm{T}\limsup_n \ol{\S_k(\pi_n)}$.
\end{dfn}

The choice of $\rm{T}\limsup$ rather $\rm{T}\liminf$ here is a matter of convention.  If $(\pi_n)_{n\ge 1}$ strong-quotient converges then this choice makes no difference.

\begin{lem}\label{lem:strong-quot-for-pairing}
Let $(\pi_n)_{n\ge 1}$ be a sequence of separable representations, and let $\phi \in \frL(\A,\rmM_k)_+$.  Then $\phi$ is asymptotically associated to $(\pi_n)_{n\ge 1}$ if and only if $\langle \phi,\cdot\rangle$ is asymptotically associated to $(\pi_n^{(k)})_{n\ge 1}$. 
\end{lem}

\begin{proof}
For each neighbourhood $O$ of $\phi$, let $\t{O}$ be the corresponding neighbhourhood of $\langle \phi,\cdot\rangle$ under the pairing isomorphism.  As $O$ runs over all neighbourhoods of $\phi$, likewise $\t{O}$ runs over all neighbourhoods of $\langle \phi,\cdot\rangle$.  The result follows from this and the relations~\eqref{eq:meet-and-nonempty} and~\eqref{eq:X-and-X}.
\end{proof}

\begin{lem}\label{lem:diag-and-sum-still-assoc}
Let $\phi \in \frL(\A,\rmM_k)_+$ and $\psi \in \frL(\A,\rmM_\ell)_+$. Assume that $\bspi$ strong-quotient converges.  If $\phi$ and $\psi$ are disjoint and they are both asymptotically associated to $\bspi$, then so is $\rm{diag}(\phi,\psi)$, and if $k=\ell$ then so is $\phi+\psi$.
\end{lem}

\begin{proof}
Let $O$ be any neighbourhood of $\rm{diag}(\phi,\psi)$.  Lemma~\ref{lem:disjoint-no-join} gives neighbourhoods $U$ of $\phi$ and $V$ of $\psi$ such that
\[\X(\pi_n,O) \supset \X(\pi_n,U) \times \X(\pi_n,V)\]
for all $n$.  By asymptotic association and strong-quotient convergence, both factors in this Cartesian product are nonempty for all sufficiently large $n$, and so $\X(\pi_n,O)$ is also nonempty for these $n$.

If $k=\ell = 1$, then the sum $\phi+\psi$ is handled in the same way, using Corollary~\ref{cor:sums}(a) in place of Lemma~\ref{lem:disjoint-no-join}. Finally, the case when $k=\ell > 1$ follows by Lemma~\ref{lem:strong-quot-for-pairing}.
\end{proof}

\subsection{Almost periodic sequences}\label{subs:AP}

In the next section, the almost periodic entropy of a completely positive map $\phi$ is defined in terms of the volumes of the $O$-typical sets in certain representations as $O$ ranges over neighbourhoods of $\phi$.  The representations that we use belong to the following sequences.

\begin{dfn}\label{dfn:AP}
An \textbf{almost periodic} (`\textbf{AP}') \textbf{sequence} for $\A$ is a sequence of finite-dimensional representations of $\A$ whose dimensions diverge to $\infty$.
\end{dfn}

Fix an AP sequence $\bspi = (\pi_n)_{n\ge 1}$, and let $d_n$ be the dimension of $\pi_n$ for each $n$.  Later we sometimes consider strong-quotient convergence for such a sequence.  In this case, since each $d_n$ is finite, the sets $\S_k(\pi_n)$ are continuous images of finite-dimensional spheres by Lemma~\ref{lem:unif-cts}, and hence already closed.

For an AP sequence $\bspi$, another possible mode of convergence is convergence of the tracial states $\tr_{d_n}\circ \pi_n$ to some limit tracial state $\tau$ in the weak$^\ast$ topology of $\A_+^\ast$.  In the terminology of free probability theory, this asserts that, for any finite subset $F$ of $\A$, the tuples $(\pi_n(a):\ a \in F)$ form a sequence of `microstates' for $F$ according to the `non-commutative probability space' $(\A,\tau)$ (see~\cite{Voi02--survey}, for example).  This convergence of tracial states is one of the hypotheses of Theorem~\ref{mainthm:det-form}.  It determines which limiting tracial state should be used to define the Fuglede--Kadison determinant in the conclusion of that theorem.

The next lemma is a companion to Lemma~\ref{lem:strong-quot-for-pairing}.

\begin{lem}\label{lem:trace-conv-for-pairing}
Let $(\pi_n)_{n\ge 1}$ be an AP sequence for $\A$, and consider the AP sequence $(\pi^{(k)}_n)_{n\ge 1}$ for $\rmM_k(\A)$.  If $\tr_{d_n}\circ \pi_n\to \tau$, then $\tr_{kd_n}\circ \pi_n^{(k)}\to \tau \otimes \tr_k$.
\end{lem}

\begin{proof}
After identifying $\tr_{kd_n}$ with $\tr_{d_n}\otimes \tr_k$, this follows from the continuity of the operation $(\cdot)\otimes \tr_k$ on tracial states.  That continuity can be checked entry-wise against elements of $\rmM_k(\A)$.
\end{proof}

In general, neither of strong-quotient convergence nor convergence of tracial states implies the other.  But convergence of tracial states does give a lower bound on strong-quotient limits: see Corollary~\ref{cor:char-and-weak-global} below.

\section{Almost periodic entropy}\label{sec:AP}

In this section we define the almost periodic entropy of an $\rmM_k$-valued completely positive map on a separable, unital C*-algebra $\A$.  We then build up its properties towards the proof of Theorem~\ref{mainthm:det-form}, and follow that with a few consequences.

\subsection{Preliminary results from high-dimensional probability}

If $g:\bbN \to (0,\infty)$, then we write $o(g(n))$ as a placeholder for any function $f:\bbN\to \bbR$ that satisfies $f(n)/g(n) \to 0$.

For the rest of this section, let
\begin{equation}\label{eq:ball-vol}
v(d):=\frac{\pi^d}{d!} \qquad (d=1,2,\dots).
\end{equation}
Then $v(d)$ is equal to the volume of the unit ball in $\bbC^d$~\cite[Subsec. 1.4.9]{RudinFTUB}. By Stirling's approximation, this function satisfies
\begin{equation}\label{eq:ball-vol-approx}
k^{kd}\cdot v(kd) = e^{o(d)}\cdot v(d)^k
\end{equation}
for any fixed $k$ as $d\to\infty$.

We write $\vol_d$ for Lebesgue measure on $\bbR^d$.  For any positive integers $d$ and $k$, we also write $\vol_{2kd}$ for the measure on $\rmM_{k,d}$ obtained by identifying this space with $\bbR^{2kd}$.  We write $\rm{S}^{d-1}$ for the unit sphere in $\bbR^d$ or in any other vector space that has a standing identification with $\bbR^d$. We write $\s_{d-1}$ for the surface-area measure on $\rmS^{d-1}$ normalized to have total mass $1$, and we refer to an integral with respect to $\s_{d-1}$ as a `spherical average'.  A key feature of these measures in high dimensions is the phenomenon of measure concentration.  We need the following special case of this.

\begin{lem}\label{lem:conc}
There is an absolute positive constant $c$ such that, for any positive integer $d$ and linear transformation $A$ of $\bbC^d$, we have
\[\s_{2d-1}\{v:\ | \langle Av,v\rangle - \tr_d A | \ge t\} \le 4e^{-ct^2d/\|A\|^2} \qquad (t > 0).\]
\end{lem}

\begin{proof}
If $f(v) = \langle Av,v\rangle$ for $v \in \rmS^{2d-1}$, then $f$ is $2\|A\|$-Lipschitz, and its spherical average is $\tr_d A$ by the invariance of trace under conjugation.  Therefore the desired inequality follows from the concentration of general Lipschitz functions on high-dimensional spheres.  See~\cite{Led92} for a proof of this by interpolation along the heat semigroup, for example.
\end{proof}

For any positive integers $d$ and $k$ and any $O \subset \rmM_{k+}$, let
\begin{equation}\label{eq:TnO}
	T(d,O) := \{X \in \rmM_{d,k}:\ X^\ast X \in O\}.
\end{equation}
Put another way, this is the set of $k$-tuples in $\bbC^{\oplus d}$ whose Gram matrices lie in $O$.

\begin{cor}\label{cor:conc}
Let $k$ be a positive integer and let $O$ be any neighbourhood of $I_k$ in $\rmM_{k+}$.  Then there are positive constants $C$ and $c$ (depending on $k$ and $O$) such that
\[\s_{2dk-1}\big(k^{-1/2}T(d,O)\big) > 1 - Ce^{-cd} \qquad (d=1,2,\dots).\]
\end{cor}

\begin{proof}
By shrinking $O$ if necessary, we may assume it is equal to
\[\bigcap_{i=1}^k\{Q \in\rmM_{k+}:\ |q_{ii} - 1| < \eps\}\cap \bigcap_{1\le i < j \le k}\{Q \in \rmM_{k+}:\ |q_{ij}| < \eps\}\]
for some $\eps > 0$. In this case, we have
\begin{multline*}
k^{-1/2}T(n,O) = \bigcap_{i=1}^k\{[x_1,\dots,x_k]\in \rmM_{d,k}:\ |\langle x_i,x_i\rangle - 1/k| < \eps/k\} \\ \cap \bigcap_{1 \le i < j \le k} \{[x_1,\dots,x_k]\in \rmM_{d,k}:\ |\langle x_i,x_j\rangle| < \eps/k\}.
\end{multline*}
This is an intersection of at most $k^2$ sets, and for each of them its complement has $\s_{2kd-1}$-measure controlled by Lemma~\ref{lem:conc}.
\end{proof}

Lemma~\ref{lem:conc0} below is a generalization of Corollary~\ref{cor:conc} that depends on Lemma~\ref{lem:conc} in the same way.

Subsection~\ref{subs:analogies} discusses the analogy between the Gram matrix of a tuple of vectors and the joint distribution of a tuple of finite-valued random variables.  The next theorem adds another layer to this analogy: a `method of types' interpretation for $\log \det Q$ when $Q$ is a positive semi-definite matrix.  In information theory, the method of types gives a basic combinatorial interpretation of discrete Shannon entropy~\cite[Sec. 11.1]{CovTho06}, and a similar interpretation of the differential entropy of a jointly Gaussian random vector in terms of volumes~\cite[Sec. 8.2]{CovTho06}.

\begin{thm}\label{thm:types-1}
	Let $Q$ be a $k$-by-$k$ positive semi-definite matrix.
	\begin{itemize}
		\item[a.] If $O$ is any neighbourhood of $Q$ in $\rmM_{k+}$, then
		\[\frac{\vol_{2kd}T(d,O)}{v(d)^k} \ge  (\det Q)^{d - o(d)}.\]
		\item[b.] For any $a > \det Q$ there is a neighbourhood $O$ of $Q$ in $\rmM_{k+}$ such that
		\[\frac{\vol_{2kd}T(d,O)}{v(d)^k} \le a^{d + o(d)}.\]
	\end{itemize}
\end{thm}

Variants of Theorem~\ref{thm:types-1} are widely known, but I have not found a convenient reference for this particular one, so I include its proof.

\begin{proof}
We write a typical element of $\rmM_{d,k}$ as $X = [x_1,\dots,x_k]$, and write
\[\|X\|_2^2 := \sum_{i=1}^k \|x_i\|^2.\]

\vspace{7pt}

\emph{Step 1.}\quad We first prove part (a) for $Q = I_k$.  By shrinking the neighbourhood $O$ if necessary, we may assume that
\[O = \big\{Q' \in\ \rmM_{k+}:\ e^{-2\eps} < \tr_k Q' < e^{2\eps}\ \hbox{and}\ (\tr_k Q')^{-1}\cdot Q' \in U\big\}\]
for some $\eps > 0$ and some other neighbourhood $U$ of $I_k$ in $\rmM_{k+}$.  This turns into
\[T(d,O) = \big\{X \in \rmM_{d,k}:\ \sqrt{k}e^{-\eps} < \|X\|_2 < \sqrt{k}e^\eps\ \hbox{and}\ X/\|X\|_2 \in k^{-1/2}T(d,U)\big\}.\]
Identifying $\rmM_{d,k}$ with $\bbC^{dk}$ and integrating in polar coordinates~\cite[Subsec. 1.4.3]{RudinFTUB}, we obtain
\begin{align}\label{eq:use-of-polar-1}
\vol_{2kd}T(d,O) &= 2kd\cdot v(kd)\cdot \s_{2kd-1}\big(k^{-1/2}T(d,U)\big)\cdot \int_{\sqrt{k}e^{-\eps}}^{\sqrt{k}e^\eps}r^{2kd-1} dr\\
&= k^{kd}\cdot (e^{2kd\eps} - e^{-2kd\eps})\cdot v(kd)\cdot \s_{2kd-1}\big(k^{-1/2}T(n,U)\big) \nonumber.
\end{align}
By Corollary~\ref{cor:conc} and the asymptotic~\eqref{eq:ball-vol-approx}, this is greater than $v(d)^k$ for all sufficiently large $d$.

\vspace{7pt}

\emph{Step 2.}\quad On the other hand, for any $\eps > 0$, the set
\[O := \{Q' \in \rmM_{k+}:\ \tr_k Q' < e^{2\eps}\}\]
is a neighbourhood of $I_k$ in $\rmM_{k+}$, and it satisfies
\[\vol_{2kd}T(d,O) = \vol_{2kd} B_{\sqrt{k}e^\eps}(0) = k^{kd}\cdot e^{2kd\eps}\cdot v(kd) = e^{2kd\eps + o(d)}\cdot v(d)^k,\]
using~\eqref{eq:ball-vol-approx} again. Since $\eps$ is arbitrary, this proves part (b) for $Q = I_k$.

\vspace{7pt}

\emph{Step 3.}\quad Now let $Q \in \rmM_{k+}$, let $R \in \rmM_k$, and let $Q' := R^\ast Q R$.  Then $Q'$ also lies in $\rmM_{k+}$.  By the continuity of matrix multiplication, if $O'$ is any neighbourhood of $Q'$, then $Q$ has a neighbourhood $O$ such that
\[O' \supset R^\ast \cdot O \cdot R.\]
In terms of tuples of vectors, this turns into
\begin{equation}\label{eq:tuples-inclusion}
T(d,O') \supset \{XR:\ X \in T(d,O)\}.
\end{equation}
If we regard a $d$-by-$k$ matrix as the $d$-tuple of its rows, then right-multiplication by $R$ on $\rmM_{d,k}$ becomes the direct sum of $d$ copies of $R^\rm{T}$ acting on $\bbC^{\oplus k}$.  Regarded as a real linear transformation acting on $2kd$ real linear dimensions, this direct sum transformation has Jacobian $|\det R|^{2d}$: see, for instance,~\cite[Subsec. 1.3.5]{RudinFTUB}. Therefore~\eqref{eq:tuples-inclusion} gives
\[\vol_{2kd}T(d,O') \ge |\det R|^{2d}\cdot \vol_{2kd}T(d,O).\]
Since $O'$ is an arbitrary neighbourhood of $Q'$, and
\[\det Q' = |\det R|^2\cdot \det Q,\]
this shows that part (a) for $Q'$ follows if we already know part (a) for $Q$.  Similarly, if $R$ is invertible, then we may reverse the roles of $Q$ and $Q'$ above and deduce that part (b) for $Q'$ follows if we already know part (b) for $Q$.

In particular, combining this reasoning with Steps 1 and 2 and making the choice $R = Q^{-1/2}$, we conclude parts (a) and (b) whenever $Q$ is nonsingular.

\vspace{7pt}

\emph{Step 4.}\quad Finally, assume that $Q = [q_{ij}]$ is singular.  Then part (a) is vacuous.

Applying Step 3 with $R$ a unitary matrix, we may assume that $Qe_1 = 0$ for the standard basis $e_1$, \dots, $e_k$ of $\bbC^k$.  Having done so, let $r > \max_i \sqrt{q_{ii}}$, let $\eps > 0$, and let
\[O := \{Q' \in \rmM_{k+}:\ q_{11}' < \eps^2\ \hbox{and}\ \max_i q'_{ii} < r^2\}.\]
Then $O$ is a neighbourhood of $Q$, and
\[T(d,O) = \big\{X \in\rmM_{d,k}:\ \|x_1\| < \eps\ \hbox{and}\ \max_i \|x_i\| < r\big\}.\]
Therefore this neighbourhood satisfies
\[\frac{\vol_{2kd}T(d,O)}{v(d)^k} \le \frac{(\eps^{2d}\cdot v(d))\cdot (r^{2d}\cdot v(d))^{k-1}}{v(d)^k} = \eps^{2d}\cdot r^{2(k-1)d}.\]
Since $\eps$ can be chosen independently of $r$, this completes the proof of part (b).
\end{proof}

Theorem~\ref{thm:types-1} is a template for Theorem~\ref{mainthm:det-form}, as well as a special case of that theorem with $\A = \rmM_k$.  The proof given above, which avoids evaluating any integrals over spaces of matrices exactly, is also a precursor to the proof of Theorem~\ref{mainthm:det-form}.

For discrete Shannon entropy, the usual proofs in the method of types involve counting the strings that have exactly a given empirical distribution and then applying Stirling's approximation.  By contrast, in Step 3 of the proof of Theorem~\ref{thm:types-1}, we use a change of variables to transport the desired estimates from $I_k$ to any other non-singular element of $\rmM_{k+}$.  This use of the symmetries of $\vol_{2kn}$ has no obvious analog for probability distributions over finite sets.  A related use of symmetry is also essential to our later proof of Theorem~\ref{mainthm:det-form}.

\subsection{Definition and first properties of almost periodic entropy}\label{subs:APent}

Consider again a separable, unital C*-algebra $\A$.  Fix an AP sequence $\bspi = (\pi_n)_{n\ge 1}$, a positive integer $k$, and an element $\phi$ of $\frL(\A,\rmM_k)_+$.  Let $d_n$ be the dimension of $\pi_n$ for each $n$.  Recall Definition~\ref{dfn:typical} of the sets $\X(\pi_n,O)$.

\begin{dfn}\label{dfn:APent}
The \textbf{almost periodic} (`\textbf{AP}') \textbf{entropy of $\phi$ along $\bs{\pi}$} is the quantity
\begin{equation}\label{eq:APent}
 \rmh_{\bspi}(\phi) := \inf_O \limsup_{n \to\infty} \frac{1}{d_n}\log \frac{\vol_{2kd_n}\X(\pi_n,O)}{v(d_n)^k},
\end{equation}
where the infimum runs over all neighbourhoods of $\phi$.
\end{dfn}

\begin{rmk}\label{rmk:any-base-OK}
The expression on the right-hand side of~\eqref{eq:APent} is monotone in $O$, so we may restrict the infimum to any base of neighbourhoods around $\phi$ without changing the value of $\rmh_{\bspi}(\phi)$. \fin
\end{rmk}

The normalization outside the logarithm in~\eqref{eq:APent} is somewhat arbitrary: other natural choices would be $2d_n$ or $2kd_n$.  But the present choice seems to make for fewer explicit factors of $k$ later, for example in Lemma~\ref{lem:e-upper-bd} below.

	We use `$\limsup$' in Definition~\ref{dfn:APent} to allow for possible non-convergence. This matches our earlier choice to use $\rm{T}\limsup$ rather than $\rm{T}\liminf$ in the definition of asymptotic association (Definition~\ref{dfn:asymp-assoc}). Having made no extra assumptions on $\bspi$, there is no reason why using `$\liminf$' should give the same value in Definition~\ref{dfn:APent}.  Indeed, one would expect this to be false in case either (i) the sequence of tracial functionals $\tr_{d_n}\circ \pi_n$ does not converge in $\A^\ast$ or (ii) the sequence $\bspi$ does not strong-quotient converge.  However, once we account for these two possibilities, we find that using `$\limsup$' or `$\liminf$' does give the same quantity in~\eqref{eq:APent}: this is Corollary~\ref{cor:h-pi-and-strong-quot} below.

\begin{lem}\label{lem:upper-semicontinuity}
For any $\bspi$ and $k$, the function $\rmh_{\bspi}$ is upper semicontinuous on $\frL(\A,\rmM_k)_+$.
\end{lem}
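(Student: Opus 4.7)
The plan is to prove this by the standard argument that an infimum of a family of functions, each of which is ``constant on its own neighbourhood,'' is automatically upper semicontinuous. Concretely, I will show that for any $\phi \in \B(\A,\rmM_k)_+$ and any real $a > \rmh_{\bspi}(\phi)$, there exists a neighbourhood $W$ of $\phi$ such that $\rmh_{\bspi}(\psi) < a$ for every $\psi \in W$.

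The key observation is that in the definition
\[ \rmh_{\bspi}(\phi) = \inf_{O \ni \phi}\, \limsup_{n\to\infty} \frac{1}{d_n}\log \frac{\vol_{2kd_n}\X(\pi_n,O)}{c(k,d_n)},\]
the inner $\limsup$ depends only on the open set $O$, not on the basepoint $\phi$. Given $a > \rmh_{\bspi}(\phi)$, the infimum lets me choose an open neighbourhood $O$ of $\phi$ such that
\[ \limsup_{n\to\infty} \frac{1}{d_n}\log \frac{\vol_{2kd_n}\X(\pi_n,O)}{c(k,d_n)} < a.\]
Now take $W := O$. For any $\psi \in W$, the set $O$ is also an open neighbourhood of $\psi$, so it is admissible in the infimum defining $\rmh_{\bspi}(\psi)$, and we obtain $\rmh_{\bspi}(\psi) < a$. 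This gives upper semicontinuity at $\phi$.

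There is no real obstacle here: the argument is purely formal and works in arbitrary topological spaces for any functional of the form $\inf_{O \ni \phi} F(O)$ where $F$ is defined on open sets. No properties of the sequence $\bspi$, of the sets $\X(\pi_n,O)$, of $\vol_{2kd_n}$, or of the normalizing constants $c(k,d_n)$ are used beyond the fact that the expression inside the $\limsup$ is a function of $O$ alone. In particular, the conclusion holds whether or not $\phi$ is asymptotically associated to $\bspi$ (in the latter case $\rmh_{\bspi}(\phi) = -\infty$, and the argument still applies with any real $a$).
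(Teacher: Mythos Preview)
Your proof is correct and is exactly the argument the paper uses: the paper's own proof is the single sentence ``This holds because $\rmh_{\bspi}(\phi)$ is an infimum of a function of open sets applied to the neighbourhoods of $\phi$,'' and you have simply unpacked that sentence. The paper later states this general principle separately as Lemma~\ref{lem:upper-semicts}, with the same one-line justification you give.
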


\begin{proof}
The value $\rmh_{\bspi}(\phi)$ is an infimum of values associated to neighbourhoods of $\phi$.
\end{proof}

\begin{lem}\label{lem:e-upper-bd}
Any $\phi \in \frL(\A,\rmM_k)_+$ satisfies
	\[\rmh_{\bspi}(\phi) \le \log \det \phi(1).\]
	In particular, if $\phi(1)$ is singular then $\rmh_{\bspi}(\phi) = -\infty$.
	\end{lem}

\begin{proof}
If $O$ is any neighbourhood of $\phi(1)$ in $\rmM_k$, then the set
\[U := \{\psi \in \frL(\A,\rmM_k)_+:\ \psi(1) \in O\}\]
is a neighbourhood of $\phi$ in $\frL(\A,\rmM_k)_+$.  If $\pi$ is any representation on $\bbC^{\oplus d}$, then this $U$ satisfies
\[\X(\pi,U) = \{V^\rm{T} = [x_1,\dots,x_k]^{\rm{T}} \in (\bbC^{\oplus d})^{\oplus k}:\ V^\ast V \in O\}.\]
Therefore, for any $h > \log \det \phi(1)$, Theorem~\ref{thm:types-1}(b) gives
\[\log\frac{\vol_{2kd_n}\X(\pi_n,U)}{v(k)^{d_n}} < hd_n\]
for all sufficiently large $n$.  Since $\rmh_{\bspi}(\phi)$ is an infimum over all neighbourhoods of $\phi$ in $\frL(\A,\rmM_k)_+$, the particular neighbourhoods considered above show that $\rmh_{\bspi}(\phi) < h$ whenever $h > \log \det \phi(1)$.
	\end{proof}

We determine the cases of equality in Proposition~\ref{prop:max-ent} below.  Lemma~\ref{lem:e-upper-bd} is analogous to the fact that the entropy of a finite-valued stationary process is bounded above by the discrete Shannon entropy of a single letter.

For a given AP sequence $\bspi = (\pi_n)_{n\ge 1}$ and completely positive map $\phi$, whether $\phi$ is asymptotically associated to $\bspi$ depends only on the equivalence class of $\pi_\phi$.  However, the actual value $\rmh_{\bspi}(\phi)$ is more sensitive.  This is an important point where our story diverges from ergodic theory.  In ergodic theory, one of the most essential properties of sofic entropy is its independence of the choice of generating observable~\cite{Bowen10}, and hence its invariance under measure-theoretic isomorphism. The analog of this for AP entropy is false. Instead, AP entropy enjoys a general transformation law when one cyclic tuple is exchanged for another: see Proposition~\ref{prop:transform}.

Some proofs about AP entropy are easier to digest in the special case $k=1$, if only because the notation is lighter.  The next result sometimes lets us make this simplification without losing any generality.  It accompanies Lemmas~\ref{lem:strong-quot-for-pairing} and~\ref{lem:trace-conv-for-pairing}.

\begin{lem}\label{lem:AP-for-pairing}
Let $k$ be a positive integer, let $\bspi = (\pi_n)_{n\ge 1}$ be an AP sequence, and let $\bspi^{(k)} := (\pi_n^{(k)})_{n\ge 1}$.  Let $\phi\in\frL(\A,\rmM_k)_+$, and define $\langle \phi,\cdot\rangle$ as in equation~\eqref{eq:pairing}.  Then
\[\rmh_{\bspi^{(k)}}(\langle \phi,\cdot\rangle) = \frac{1}{k} \rmh_{\bspi}(\phi).\]
\end{lem}

\begin{proof}
Let $O$ be a neighbourhood of $\phi$, and let $\t{O}$ be the associated neighbourhood of $\langle \phi,\cdot\rangle$ under the pairing isomorphism. Then we have
	\[\frac{\vol_{2kd_n}\X(\pi_n^{(k)},\t{O})}{v(kd_n)} = \frac{k^{-kd_n}\cdot\vol_{2kd_n}\X(\pi_n,O)}{v(kd_n)} = \frac{e^{o(d_n)}\cdot \vol_{2kd_n}\X(\pi_n,O)}{v(d_n)^k},\]
	using~\eqref{eq:X-and-X} for the first equality and~\eqref{eq:ball-vol-approx} for the second.  Now take logarithms, normalize by $kd_n$ (the dimension of $\pi_n^{(k)}$), and insert into Definition~\ref{dfn:APent}.
	\end{proof}

\begin{rmk}
The Introduction discusses AP entropy as a representation theoretic analog of sofic entropy.  Let us comment further on their relationship.

Theorem~\ref{mainthm:det-form} requires an AP sequence whose pulled-back traces converge to the state $\tau$, which then determines the Fuglede--Kadison determinant that appears.  Suppose that $\A = C^\ast \G$ and that $\tau$ is given by the regular charater.  Then $\tau$ is a limit of finite-dimensional characters if and only if $\G$ has an AP sequence $(\pi_n)_{n\ge 1}$ that separates its elements.  Such groups were called `maximally almost periodic' by von Neumann~\cite{vonNeu34}; a textbook introduction is~\cite[Secs. 16.4--5]{Dix--Cstar}, where they are called `injectable'. 

However, we can also study regular characters on a larger class of groups as follows.  If $\G$ is any countable group, then we can write it as $F/N$ for some free group $F$ and normal subgroup $N$. Now we can look for finite-dimensional representations of $F$ whose characters converge to the quasi-regular character $1_N$, rather than finite-dimensional representations of $\G$ whose characters converge to $1_{\{e\}}$.  This offers more flexibility, because those finite-dimensional representations of $F$ need not have trivial restriction to $N$ until we take their limit.  Allowing convergence in this sense, the availability of finite-dimensional approximants to the regular representation of $\G$ is equivalent to $\G$ being `hyperlinear' in the terminology of~\cite{Rad08}.  In particular, it does not depend on the choice of presentation $F/N$.  This class of groups is the `linear' analog of the sofic groups, and includes all sofic groups~\cite{EleSza05}.  Both classes of groups are described in the survey~\cite{KwiPes13}. 

Thus, specialized to positive definite functions on groups, AP entropy is most naturally applied for `hyperlinear' groups via associated quasi-regular characters on free groups.  For these its definition is a direct analog of the definition of sofic entropy via observables or partitions (see~\cite[Def. 5]{Bowen--survey}, for example). \fin
\end{rmk}

\subsection{First transformation formula}

Fix an AP sequence $\bspi = (\pi_n)_{n\ge 1}$, and let $d_n$ be the dimension of $\pi_n$.  At this point, some of our work starts to need the assumption that the tracial states $\tr_{d_n}\circ \pi_n$ converge to a limit in $\A^\ast$.

The next proposition is a basic change-of-variables formula for AP entropy.  It can be seen as a cousin of Voiculescu's change-of-variables formulas for his free entropy~\cite[Prop. 3.5]{Voi94}, although the proof in our `linear' setting is simpler.

\begin{prop}\label{prop:transform}
Fix $\phi \in \frL(\A,\rmM_k)_+$.
\begin{enumerate}
\item[a.] Let $Q \in \rmM_k$ be invertible, and define
\[\psi(b) := (Q^\rm{T})^\ast \phi(b) Q^\rm{T} \qquad (b \in \A).\]
Then
\[\rmh_{\bspi}(\psi) = 2\log |\det Q| + \rmh_{\bspi}(\phi).\]

\item[b.] Assume further that $\tr_{d_n}\circ \pi_n \to \tau$.  Let $a \in \rmM_k(\A)$ be invertible, and define $\psi \in \frL(\A,\rmM_k)_+$ in terms of $\phi$ and $a$ as in Lemma~\ref{lem:new-type}. Then $\phi$ is asymptotically associated to $\bspi$ if and only if $\psi$ is, and in that case
	\[\rmh_{\bspi}(\psi) = 2\log \Delta_{\tau\otimes \Tr_k}|a| + \rmh_{\bspi}(\phi).\]
\end{enumerate}
\end{prop}

\begin{proof}
\emph{Part (a).}\quad For any neighbourhood $O$ of $\psi$, the second part of Lemma~\ref{lem:lin-maps} gives a neighbourhood $U$ of $\phi$ such that
\[(I_d\otimes Q)[\X(\pi,U)] \subset \X(\pi,O)\]
for any representation $\pi$ on $\bbC^{\oplus d}$.  For each $n$, it follows that
\begin{align*}
\vol_{2kd_n}\X(\pi_n,O) &\ge |\det (I_{d_n}\otimes Q)|^2\cdot \vol_{2kd_n}\X(\pi_n,U) \\ &= |\det Q|^{2d_n}\cdot \vol_{2kd_n}\X(\pi_n,U).
\end{align*}
The determinants are squared here because $I_{d_n}\otimes Q$ is a linear transformation in $kd_n$ complex dimensions, but we must treat it as a real linear map in $2kd_n$ real dimensions for the purpose of computing volumes (see, for instance,~\cite[Subsec. 1.3.5]{RudinFTUB}).  Inserting this inequality into Definition~\ref{dfn:APent} and taking the infimum over $O$, it follows that
\[\rmh_{\bspi}(\psi) \ge 2\log|\det Q| + \rmh_{\bspi}(\phi).\]
Applying the same reasoning with the roles of $\phi$ and $\psi$ reversed and with $Q^{-1}$ in place of $Q$, we obtain the reverse inequality as well.

\vspace{7pt}

\emph{Part (b).}\quad This time, if $O$ is a neighbourhood of $\psi$, then Lemma~\ref{lem:lin-maps} gives a neighbourhood $O'$ of $\phi$ such that
	\begin{align}\label{eq:vol-compar}
\vol_{2kd_n}\X(\pi_n,O) &\ge \vol_{2kd_n}\big(\pi_n^{(k)}(a)[\X(\pi_n,O')]\big) \nonumber \\ &= \Det\,|\pi_n^{(k)}(a)|^2\cdot \vol_{2kd_n}\X(\pi_n,O')
\end{align}
for every $n$.  Once again, the determinant is squared because we must treat $\pi_n^{(k)}(a)$ as a linear map in $2kd_n$ real dimensions for the purpose of computing volumes.

Since $a$ is invertible and $\pi_n^{(k)}$ is a unital C*-algebra homomorphism, we have
\[\Det\,|\pi_n^{(k)}(a)| = \exp(\Tr_{kd_n}(\log |\pi_n^{(k)}(a)|)) = \exp\big(\Tr_{kd_n}(\pi_n^{(k)}(\log|a|))\big).\]
By our assumption on $\bspi$ and Lemma~\ref{lem:trace-conv-for-pairing}, this is equal to
\[\exp\big(d_n\big((\tau\otimes \Tr_k) (\log |a|) + o(1)\big)\big) = (\Delta_{\tau\otimes \Tr_k}|a|)^{d_n + o(d_n)} \qquad \hbox{as}\ n\to\infty.\]
Therefore, normalizing and taking logarithms in~\eqref{eq:vol-compar}, that inequality becomes
\[\frac{1}{d_n}\log \frac{\vol_{2kd_n}\X(\pi_n,O)}{v(d_n)^k} \ge 2\log \Delta_{\tau\otimes \Tr_k}|a| + o(1) + \frac{1}{d_n}\log \frac{\vol_{2kd_n}\X(\pi_n,O')}{v(d_n)^k}.\]
Letting $n\to\infty$ and then taking the infimum over $O$, this gives
	\[\rmh_{\bspi}(\psi) \ge 2\log \Delta_{\tau\otimes \Tr_k}|a| + \rmh_{\bspi}(\phi).\]

The reverse of this inequality also holds by swapping the roles of $\phi$ and $\psi$ and replacing $a$ with $a^{-1}$, which satisfies $\Delta_{\tau\otimes \Tr_k}|a^{-1}| = (\Delta_{\tau\otimes \Tr_k}|a|)^{-1}$ (see, for instance,~\cite[Thm. I.6.10(iii)]{Dix--vN}).
\end{proof}

\begin{rmk}
If ${\tr_{d_n}\circ \pi_n}$ converges to $\tau$, then we can recognize part (a) above as a special case of part (b) by letting $a := 1_\A\otimes Q$ in $\rmM_k(\A) = \A\otimes\rmM_k$ and checking that $\Delta_{\tau\otimes \Tr_k}|a| = \det |Q| = |\det Q|$.  We formulate part (a) separately because it holds without the assumption of trace convergence. \fin
\end{rmk}

\subsection{Spherical measures and concentration}

When $k=1$ and $\phi$ is normalized, the next lemma gives an alternative to using Lebesgue measure in Definition~\ref{dfn:APent}.

\begin{lem}\label{lem:normalized}
Let $\phi \in \S_1(\A)$ and let $\bspi$ be an AP sequence. Let $\cal{O}$ be a base of neighbourhoods around $\phi$ in $\S_1(\A)$. Then
\begin{equation}\label{eq:normalized1}
 \rmh_{\bspi}(\phi) = \inf_{O \in \cal{O}} \limsup_{n \to\infty} \frac{1}{d_n}\log \s_{2d_n-1}\X(\pi_n,O).
 \end{equation}
\end{lem}

\begin{proof}
Let $\cal{U}$ be the family of all sets that have the form
\begin{equation}\label{eq:U-delta-O}
\big\{\psi \in \A_+^\ast:\ e^{-2\delta} < \psi(1) < e^{2\delta}\ \hbox{and}\ \psi(1)^{-1}\cdot \psi \in O\big\}
\end{equation}
for some $O \in \cal{O}$ and $\delta > 0$.  Then $\cal{U}$ is a base of neighbourhoods around $\phi$ in $\A_+^\ast$.  We may therefore restrict attention to neighbourhoods from $\cal{U}$ when evaluating $\rmh_{\bspi}(\phi)$ (see Remark~\ref{rmk:any-base-OK}).

So now let $U$ be the set in~\eqref{eq:U-delta-O} for some $\delta > 0$ and $O \in \cal{O}$, and let $\pi$ be a representation on $\bbC^{\oplus d}$.  Then the special form of $U$ gives
\[1_{\X(\pi,U)}(ry) = 1_{(e^{-\delta},e^\delta)}(r)\cdot 1_{\X(\pi,O)}(y) \qquad (r > 0,\ y \in \rm{S}^{2d-1}).\]
As a result, integrating in polar coordinates~\cite[Subsec. 1.4.3]{RudinFTUB} gives
\begin{align*}
\vol_{2d}\X(\pi,U) &= 2d\cdot v(d)\cdot\s_{2d-1}\X(\pi,O)\cdot \int_{e^{-\delta}}^{e^\delta} r^{2d-1}\ dr\\
&= v(d)\cdot (e^{2\delta d} - e^{-2\delta d})\cdot\s_{2d-1}\X(\pi,O).
\end{align*}
This implies that
\[\s_{2d-1}\X(\pi,O) \le \frac{\vol_{2d}\X(\pi,U)}{v(d)} \le e^{2\delta d}\cdot\s_{2d-1}\X(\pi,O)\]
for all sufficiently large $d$. Inserting this into Definition~\ref{dfn:APent} and taking the limit supremum over $n$ and then the infimum over elements of $\cal{U}$, we obtain~\eqref{eq:normalized1}.
\end{proof}

If $\phi \in \S_k(\A)$ for some $k > 1$, then we may apply Lemma~\ref{lem:normalized} to the positive functional $\langle \phi,\cdot\rangle$ on $\rmM_k(\A)$, whose sets of approximately typical vectors along $\bspi^{(k)}$ are given by equation~\eqref{eq:X-and-X}.  We can then deduce a result for $\phi$ itself via Lemma~\ref{lem:AP-for-pairing}.

For a state, we now have a choice between the original definition of AP entropy and the alternative given by Lemma~\ref{lem:normalized}.  Each has its advantages.  A major advantage of the measures $\s_{2d-1}$ is the concentration estimate from Lemma~\ref{lem:conc}.  As above, we discuss this only for $k=1$ to lighten notation.  Let $\pi$ be a representation on $\bbC^{\oplus d}$. Since the trace of a matrix is invariant under unitary conjugation, the average of the type $\Phi^\pi_x$ with respect to the spherical measure $\s_{2d-1}$ is equal to $\tr_d\circ \pi$.  When $d$ is large, Lemma~\ref{lem:conc} improves this conclusion considerably: $\Phi^\pi_x$ is actually close to $\tr_d\circ \pi$ for most individual $x \in \rmS^{2d-1}$.

Let $\tau$ be a tracial state on $\A$, and let $\l$ be its GNS representation.

\begin{lem}\label{lem:conc0}
If $\tr_{d_n}\circ \pi_n \to \tau$, then for every neighbourhood $U$ of $\tau$ there are positive constants $C$ and $c$ such that
	\[\s_{2d_n-1}\X(\pi,U) \ge 1 - Ce^{-cd_n}.\]
\end{lem}

\begin{proof}
It suffices to prove this for $U$ belonging to some sub-base of neighbourhoods of $\tau$, since any other neighbourhood contains a finite intersection of these.  We may therefore assume that
\[U = \{\psi\in \S_1(\A):\ |\psi(a) - \tau(a)| < \eps\}\]
for some $a \in \A$ and $\eps > 0$.

Since $\tr_{d_n}\circ \pi_n(a) \to \tau(a)$, this $U$ satisfies
\[\X(\pi_n,U) \supset  \{v \in \rmS^{2d_n-1}:\ |\langle \pi_n(a)v,v\rangle - \tr_{d_n}\pi_n(a)| < \eps/2\}\]
for all sufficiently large $n$. Now the result follows from Lemma~\ref{lem:conc}.
\end{proof}

\begin{cor}\label{cor:char-and-weak-global}
Assume that $\tr_{d_n}\circ \pi_n \to \tau$.  If $\phi \in\frL(\A,\rmM_k)_+$ and $\phi$ is approximately associated to $\l^{\oplus \infty}$, then it is asymptotically associated to $(\pi_n)_{n\ge 1}$.
\end{cor}

\begin{proof}
We may assume that $\phi$ is normalized.  Proposition~\ref{prop:RadNik} gives that $\S_k(\l^{\oplus \infty})$ equals $\S_k(\l^{\oplus k})$, so now our assumption actually says that $\phi \in \ol{\S_k(\l^{\oplus k})}$.

Since $\l^{\oplus k}$ is the minimal dilation of $\tau \otimes I_k := \rm{diag}(\tau,\dots,\tau)$, by Corollary~\ref{cor:typ-trans} it suffices to show that $\tau\otimes I_k$ itself is asymptotically associated to $\bspi$.  When $k = 1$, this follows from Lemma~\ref{lem:conc0}: indeed, a random vector drawn from $\s_{2d_n-1}$ is approximately typical for $\tau$ with high probability once $d_n$ is large enough.  Finally, if $k > 1$, then we can apply the previous case to the sequence $\bspi^{(k)}$ using Lemmas~\ref{lem:trace-conv-for-pairing} and~\ref{lem:strong-quot-for-pairing} and the fact that
	\[\langle \tau\otimes I_k,\cdot\rangle = \tau\otimes \tr_k.\]
\end{proof}

In the notation of Subsection~\ref{subs:strong-quotient}, Corollary~\ref{cor:char-and-weak-global} gives a lower bound on the set $\rm{T}\liminf_n\ol{\S_k(\pi_n)}$ for each $k$.  This can be an equality: for example, this is so if $\A = C^\ast \G$, the states $\tr_{d_n}\circ \pi_n$ converge to the regular tracial state, and $(\pi_n)_{n\ge 1}$ also converges strongly to the regular representation.  Some important examples satisfying these three conditions are surveyed in~\cite{Magee--survey}.  In other cases, the inclusion can be strict.  Nevertheless, we do always obtain the following.

\begin{cor}\label{cor:singular-part-controls}
Assume that $\tr_{d_n}\circ \pi_n \to \tau$.  Let $\phi \in \frL(\A,\rmM_k)_+$, and consider its Lebesgue decomposition $\phi_{\rm{ac}} + \phi_{\rm{sing}}$ relative to $\tau$.  Then $\phi$ is asymptotically associated to $\bspi$ if and only if $\phi_{\rm{sing}}$ is asymptotically associated to $\bspi$.
\end{cor}

\begin{proof}
First, by~\eqref{eq:pairing-decomp} and Lemmas~\ref{lem:strong-quot-for-pairing} and~\ref{lem:trace-conv-for-pairing}, it is equivalent to prove that $\langle \phi,\cdot\rangle$ is asymptotically associated to $\bspi^{(k)}$ if and only if $\langle\phi,\cdot\rangle_{\rm{sing}}$ is asymptotically associated to $\bspi^{(k)}$.  So we may assume that $k=1$ for the rest of the proof.

By construction, $\phi_{\rm{sing}}$ is associated to $\pi_\phi$.  Therefore, if $\phi$ is asymptotically associated to $\bspi$, then $\phi_{\rm{sing}}$ is as well by Corollary~\ref{cor:typ-trans}.

On the other hand, Corollary~\ref{cor:char-and-weak-global} tells us that $\phi_{\rm{ac}}$ is always asymptotically associated to $\bspi$, and $\phi_{\rm{ac}}$ and $\phi_{\rm{sing}}$ are disjoint by construction.  Therefore, if $\phi_{\rm{sing}}$ is asymptotically associated to $\bspi$, then so is $\phi$ itself by Lemma~\ref{lem:diag-and-sum-still-assoc}.
\end{proof}

Lemmas~\ref{lem:normalized} and~\ref{lem:conc0} also let us determine the cases of equality in Lemma~\ref{lem:e-upper-bd}.

\begin{prop}\label{prop:max-ent}
Assume that $\tr_{d_n}\circ \pi_n\to \tau$, and let $\phi \in \frL(\A,\rmM_k)_+$ with $\phi(1)$ nonsingular. Then equality holds in Lemma~\ref{lem:e-upper-bd} if and only if $\phi = \tau\otimes \phi(1)$.
\end{prop}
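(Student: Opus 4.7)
The strategy is to reduce the proposition to a concentration estimate by way of the unital-normalization formula in Corollary~\ref{cor:normalized}(b). Since $\phi(e)$ is nonsingular, that corollary rewrites
\[
 \rmh_{\bspi}(\phi) - \rmH(\phi(e)) \;=\; \inf_{O}\limsup_{n\to\infty}\frac{1}{d_n}\log m_{\rm{U}(k,d_n)}\X(\pi_n,O),
\]
where $O$ ranges over neighbourhoods in $\S_k(\A)$ of the normalized map
\[
 \phi_1 \;:=\; \phi(e)^{-1/2}\phi\,\phi(e)^{-1/2}\ \in\ \S_k(\A).
\]
Because $m_{\rm{U}(k,d_n)}$ is a probability measure each limsup is $\le 0$, so equality in Lemma~\ref{lem:e-upper-bd} holds if and only if, for every neighbourhood $O$ of $\phi_1$, the quantity $m_{\rm{U}(k,d_n)}\X(\pi_n,O)$ fails to decay exponentially in $d_n$.

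A direct substitution shows that $\phi_1 = \tau\otimes I_k$ is equivalent to $\phi(a) = \tau(a)\phi(e)$ for all $a$, i.e.\ to the stated identity $\phi = \tau\otimes \phi(e)$. So it suffices to prove the dichotomy between $\phi_1 = \tau\otimes I_k$ and $\phi_1\ne \tau\otimes I_k$.

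For the sufficiency direction, assume $\phi_1 = \tau\otimes I_k$. Since $\tr_{d_n}\circ \pi_n\to \tau$, Corollary~\ref{cor:conc0} applied to any neighbourhood $O$ of $\tau\otimes I_k = \phi_1$ furnishes constants $C,c>0$ with
\[
 m_{\rm{U}(k,d_n)}\X(\pi_n,O) \;\ge\; 1 - Ce^{-cd_n}
\]
for all sufficiently large $n$. Thus every limsup above vanishes, the infimum is $0$, and $\rmh_{\bspi}(\phi) = \rmH(\phi(e))$.

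For the necessity direction, suppose $\phi_1\ne \tau\otimes I_k$. The compact Hausdorff space $\S_k(\A)$ lets us pick disjoint open neighbourhoods $O$ of $\phi_1$ and $W$ of $\tau\otimes I_k$. A unital type of an orthonormal tuple cannot lie in both, so $\X(\pi_n,O)\cap \rm{U}(k,d_n)$ and $\X(\pi_n,W)\cap \rm{U}(k,d_n)$ are disjoint. Applying Corollary~\ref{cor:conc0} to $W$ yields
\[
 m_{\rm{U}(k,d_n)}\X(\pi_n,O)\;\le\;1 - m_{\rm{U}(k,d_n)}\X(\pi_n,W)\;\le\;Ce^{-cd_n}
\]
for large $n$, so the limsup corresponding to this particular $O$ is at most $-c<0$. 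Taking the infimum, we conclude $\rmh_{\bspi}(\phi) < \rmH(\phi(e))$. There is no real obstacle here: the only care required is in tracking the normalization $\phi\mapsto \phi_1$ and in separating the two unital maps by disjoint open neighbourhoods so that concentration on one cuts down the mass on the other.
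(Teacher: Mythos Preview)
Your proof is correct and follows essentially the same route as the paper: reduce to the unital normalization $\phi_1$ via Corollary~\ref{cor:normalized}(b), use Corollary~\ref{cor:conc0} to show that $m_{\rm{U}(k,d_n)}\X(\pi_n,O)\to 1$ when $\phi_1=\tau\otimes I_k$, and in the other case separate $\phi_1$ from $\tau\otimes I_k$ by disjoint neighbourhoods so that the same concentration estimate forces exponential decay on the $\phi_1$-side. The paper's proof differs only cosmetically (it assumes $\phi(e)=I_k$ after normalizing rather than carrying the notation $\phi_1$).
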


\begin{proof}
\emph{Step 1.}\quad First assume that $k=1$.  For any neighbourhood $U$ of $\tau$, Lemma~\ref{lem:conc0} shows that
\[\s_{2d_n-1}\X(\pi_n,U) \to 1.\]
Therefore $\rmh_{\bspi}(\tau) = 0$, by Lemma~\ref{lem:normalized}.

On the other hand, suppose that $\phi \ne \phi(1)\cdot \tau$.  After normalizing via Proposition~\ref{prop:transform}(a), we may assume that $\phi(1) = 1$.  Since $\phi \ne \tau$, they have disjoint neighbourhoods, say $U$ and $U'$ respectively.  Applying Lemma~\ref{lem:conc0} to the neighbourhood $U'$, there are positive constants $C$ and $c$ such that
\[\s_{2d_n-1}\X(\pi_n,U) \le\s_{2d_n-1}\big(\rm{S}^{2d_n-1}\setminus \X(\pi_n,U')\big)\le Ce^{-cd_n}.\]
This turns into $\rmh_{\bspi}(\phi) \le -c < 0$, again by Lemma~\ref{lem:normalized}.

\vspace{7pt}

\emph{Step 2.}\quad Now suppose that $k > 1$.  Then the case $k=1$ and Lemmas~\ref{lem:trace-conv-for-pairing} and~\ref{lem:AP-for-pairing} show that $\rmh_{\bspi}(\tau \otimes I_k) = 0$.

Let $\phi$ be any other element of $\frL(\A,\rmM_k)_+$ such that $\phi(1)$ is non-singular and equality is achieved in Lemma~\ref{lem:e-upper-bd}.  Applying Proposition~\ref{prop:transform}(a) with $Q := \phi(1)^{-1/2}$, we may this time assume that $\phi(1) = I_k$ and $\rmh_{\bspi}(\phi) = 0$.
	
Having done so, consider the sequence $\bspi^{(k)}$ and the pairing functional $\langle \phi,\cdot\rangle$ on $\rmM_k(\A)$.   Since $\phi$ achieves equality in Lemma~\ref{lem:e-upper-bd}, Lemma~\ref{lem:AP-for-pairing} gives
\[\rmh_{\bspi^{(k)}}(\langle \phi,\cdot\rangle) = \frac{1}{k}\rmh_{\bspi}(\phi) = 0.\]
Now the case $k=1$ applied to $\bspi^{(k)}$ shows that $\langle\phi,\cdot\rangle$ must equal
\[\tau\otimes \tr_k = \langle \tau\otimes I_k,\cdot\rangle,\]
and hence that $\phi = \tau\otimes I_k$.
\end{proof}

\begin{cor}\label{cor:pre-B}
	Assume that $\tr_{d_n}\circ \pi_n \to \tau$.  Let $\tau$ be associated to $\l$ by $\xi$, let $a$ be a positive invertible element of $\A$, and let $\phi := \Phi^\l_{\l(a)\xi}$.  Then $\rmh_{\bspi}(\phi) = 2\log\Delta_\tau a$.
\end{cor}

\begin{proof}
The expression for $\phi$ in terms of $\tau$ and $a$ is a special case of the relationship from Lemma~\ref{lem:new-type}.  Therefore Proposition~\ref{prop:transform}(b) gives
\[\rmh_{\bspi}(\phi) = 2 \log \Delta_\tau a + \rmh_{\bspi}(\tau).\]
On the right-hand side, the second term vanishes by Proposition~\ref{prop:max-ent}.
\end{proof}

Corollary~\ref{cor:pre-B} is a special case of Theorem~\ref{mainthm:det-form}.  In the next subsection we use it during the full proof of that theorem.  This is somewhat similar to the use of Proposition~\ref{prop:Den} in the proof of Theorem~\ref{mainthm:amenable}.

\subsection{Proof of Theorem~\ref{mainthm:det-form}}\label{subs:det-form}

Let $\tau$ be associated to the representation $\l$ by the cyclic tracial vector $\xi$, and let $\t{\tau}$ be the normal tracial state on $\l(\A)'$ defined by $\xi$.  We use the letter $\Delta$ for both (i) the Fuglede--Kadison determinant on $\A$ defined by $\tau$ and also (ii) the Fuglede--Kadison determinant on square-integrable operators affiliated to $\l(\A)'$.

\begin{proof}[Proof of Theorem~\ref{mainthm:det-form}]
\emph{Step 1.}\quad First we prove the inequality ``$\ge$'' when $k=1$ and $\phi$ is $\l$-normal.  In this case parts (i) and (iv) of Lemma~\ref{lem:Kap+2} give a sequence of positive invertible elements $b_1$, $b_2$, \dots of $\A$ such that
\[\tau(b_i(\cdot)b_i) \to \phi \qquad \hbox{and} \qquad (\Delta b_i)^2 \to \Delta \phi \qquad \hbox{as}\ i\to\infty.\]
Then Corollary~\ref{cor:pre-B} gives $\rmh_{\bspi}(\phi_i) = 2\log \Delta b_i$ for each $i$. Letting $i\to \infty$, the upper semicontinuity from Lemma~\ref{lem:upper-semicontinuity} turns this into
\[\rmh_{\bspi}(\phi) \ge 2\lim_{i\to\infty}\log \Delta b_i = \log \Delta \phi.\]

\emph{Step 2.}\quad Next we prove the inequality ``$\ge$'' when $k=1$ but $\phi$ is otherwise arbitrary.  Let $O$ be any neighbourhood of $\phi$.  Since $\phi_{\rm{ac}}$ and $\phi_{\rm{sing}}$ are disjoint,  Corollary~\ref{cor:sums}(a) gives neighbourhoods $U$ of $\phi_{\rm{sing}}$ and $W$ of $\phi_{\rm{ac}}$ such that
\[	\X(\pi_n,O) \supset \X(\pi_n,U) + \X(\pi_n,W)\]
for every $n$.  Since $\phi$ is asymptotically associated to $\bspi$ by assumption, so is $\phi_{\rm{sing}}$ by Corollary~\ref{cor:singular-part-controls}.  Therefore $\X(\pi_n,U)$ is nonempty along an infinite subsequence of values of $n$, say $n_1 < n_2 < \dots$.  Let $\bspi'$ be the corresponding AP subsequence of $\bspi$.  For each $n_i$, the set $\X(\pi_{n_i},O)$ contains a translate of $\X(\pi_{n_i},W)$, and so
\begin{align*}
\limsup_{n\to\infty}\frac{1}{d_n}\log \frac{\vol_{2d_n}\X(\pi_n,O)}{v(d_n)} &\ge\limsup_{i\to\infty}\frac{1}{d_{n_i}}\log \frac{\vol_{2d_{n_i}}\X(\pi_{n_i},O)}{v(d_{n_i})}\\
&\ge \limsup_{i\to\infty}\frac{1}{d_{n_i}}\log \frac{\vol_{2d_{n_i}}\X(\pi_{n_i},W)}{v(d_{n_i})} \\
&\ge \rmh_{\bspi'}(\phi_{\rm{ac}}).
\end{align*}
This lower bound is at least $\log \Delta \phi_{\rm{ac}} = \log\Delta\phi$ by applying Step 1 along the AP sequence $\bspi'$.  Since $O$ is arbitrary, this proves that $\rmh_{\bspi}(\phi)\ge \log \Delta \phi$.

\vspace{7pt}

\emph{Step 3.}\quad We now prove the inequality ``$\le$'' in case $k=1$.  This proof is quickest via the variational principle from Proposition~\ref{prop:det-var-princ}.

Let $a \in \A$ be positive and invertible and satisfy $\Delta a \ge 1$.  Define a new positive functional by $\psi:= \phi(\sqrt{a}(\cdot)\sqrt{a})$.  Then we have
\begin{align*}
\log \phi(a) &= \log \psi(1) \\
&\ge \rmh_{\bspi}(\psi) &\qquad &\hbox{(Lemma~\ref{lem:e-upper-bd})}\\
&= 2\log \Delta \sqrt{a} + \rmh_{\bspi}(\phi) &\qquad &\hbox{(Proposition~\ref{prop:transform}(b))}\\
&\ge \rmh_{\bspi}(\phi) &\qquad &\hbox{(because $\Delta \sqrt{a} = \sqrt{\Delta a} \ge 1$)}.
\end{align*}
Taking the infimum over $a$, Proposition~\ref{prop:det-var-princ} turns this into $\log \Delta \phi \ge \rmh_{\bspi}(\phi)$.

\vspace{7pt}

\emph{Step 4.}\quad Finally, if $k > 1$, then we can apply the previous steps to the functional $\langle \phi,\cdot\rangle$ on $\rmM_k(\A)$.  First, Lemma~\ref{lem:trace-conv-for-pairing} gives that $\tr_{kd_n}\circ \pi_n^{(k)}\to \tau \otimes \tr_k$.  Secondly, Lemma~\ref{lem:strong-quot-for-pairing} gives that $\langle \phi,\cdot\rangle$ is asymptotically associated to $\bspi^{(k)}$.  Finally, we have
\begin{align*}
\rmh_{\bspi}(\phi) &= k\cdot \rmh_{\bspi^{(k)}}(\langle \phi,\cdot\rangle) &\qquad &\hbox{(Lemma~\ref{lem:AP-for-pairing})}\\
&= k\cdot \log \Delta_{\tau\otimes \tr_k}(\langle \phi,\cdot\rangle) &\qquad &\hbox{(case $k=1$ of Theorem~\ref{mainthm:det-form})}\\
&= \log \Delta \phi &\qquad &\hbox{(equation~\eqref{eq:det-phi-det-pairing})}.
\end{align*}
\end{proof}

\subsection{Some consequences of Theorem~\ref{mainthm:det-form}}\label{subs:C-cors}

\subsubsection*{\emph{Basic consequences}}

\begin{cor}\label{cor:det-form}
	Assume that $\tr_{d_n}\circ \pi_n \to \tau$, and let $\phi,\psi\in \frL(\A,\rmM_k)_+$.
	\begin{enumerate}
		\item[a.] We have
		\[\rmh_{\bspi}(\phi) = \left\{\begin{array}{ll}\rmh_{\bspi}(\phi_{\rm{ac}}) & \quad \hbox{if $\phi_{\rm{sing}}$ is asymptotically associated to $\bspi$}\\ -\infty & \quad \hbox{otherwise.} \end{array}\right.\]
		\item[b.] If $\rmh_{\bspi}(\phi) > -\infty$, then $\phi_{\rm{ac}}$ is associated to $\l^{\oplus k}$ by a $k$-tuple that is cyclic and separating for $\l^{\oplus k}(\A)''$, and so $\pi_\phi \gtrsim \pi_{\phi_{\rm{ac}}} \simeq \l^{\oplus k}$.
		\item[c.] If $\phi \ge \psi$ in the positive definite ordering, and if $\phi$ is asymptotically associated to $\bspi$, then $\rmh_{\bspi}(\phi) \ge \rmh_{\bspi}(\psi)$.
	\end{enumerate}
\end{cor}

\begin{proof}
Part (a) is just the combination of Corollary~\ref{cor:singular-part-controls} and Theorem~\ref{mainthm:det-form}.

If $\rmh_{\bspi}(\phi) > -\infty$, then Theorem~\ref{mainthm:det-form} tells us that $\Delta \phi = \Delta \phi_{\rm{ac}} > 0$.  Expressing this Fuglede--Kadison determinant in terms of a non-negative affiliated operator $T$ from Proposition~\ref{prop:RadNik}, it follows that $\Delta T$ is also positive, and hence $T$ is nonsingular.  Since $T$ is self-adjoint, it therefore also has dense image.  Since $\pi_{\phi_{\rm{ac}}}$ is equivalent to the subrepresentation of $\l^{\oplus k}$ defined by $\ol{\rm{img}\,T}$, this proves part (b).

Finally, if $\phi \ge \psi$ and $\phi$ is asymptotically associated to $\bspi$, then so is $\psi$, and Theorem~\ref{mainthm:det-form} shows that part (c) follows from Proposition~\ref{prop:det-props}(a).
\end{proof}

\subsubsection*{\emph{Modes of convergence}}

If an AP sequence $\bspi$ strong-quotient converges, then Theorem~\ref{mainthm:det-form} shows that the AP entropy function $\rmh_{\bspi}$ depends only on the values of the limits
\[\tau = \lim_n \tr_{d_n}\circ \pi_n \qquad \hbox{and} \qquad \lim_n \S_k(\pi_n) \qquad \hbox{for}\ k=1,2,\dots.\]
If $\tr_{d_n}\circ \pi_n \to \tau$ but $\bspi$ does not strong-quotient converge, then Theorem~\ref{mainthm:det-form} may give different values of AP entropy along different subsequences, but the only two possible values are $\log\Delta_\tau \phi$ or $-\infty$ (which may still happen to be equal).

In ergodic theory, an important open problem asks whether the sofic entropy of a measure-preserving system must always equal either one particular value or $-\infty$, independently of the choice of sofic approximation.  Corollary~\ref{cor:det-form} answers the analogous question positively for AP entropy.  On the other hand, it is known that the answer is negative for the topological variant of sofic entropy~\cite{AirBowLin22}.

\begin{rmk}\label{rmk:LD-conv}
Subsection~\ref{subs:strong-quotient} mentions the analogy between strong-quotient convergence of AP sequences and local-global convergence in graph theory, as discussed further in~\cite{AbeEle11}.  If $(\pi_n)_{n\ge 1}$ strong-quotient converges, then Theorem~\ref{mainthm:det-form} shows that the expressions appearing in the definition of AP entropy also converge in a sense much like a large deviations principle.  This is actually the analog of another mode of convergence for graph sequences of uniformly bounded degree: `large deviation convergence', introduced in~\cite{BorChaGam17}.  Whether local-global convergence implies large deviations convergence for such graph sequences remains open.  Our results answer the analogous question for AP sequences. \fin
\end{rmk}

Now ssume again that $\tr_{d_n}\circ \pi_n \to \tau$.  We can reverse the discussion above by asking whether the function $\rmh_{\bspi}$ determines the topological upper limit of the sequence $(\S_k(\pi_n))_{n\ge 1}$ for each $k$.  This is slightly subtle, because for $\phi \in \S_k(\A)$ the value $\log \Delta_\tau \phi$ may equal $-\infty$ even if $\pi_\phi \gtrsim \l^{\oplus k}$.  To evade this problem, we can instead use $\phi$ to form the perturbations
\[\phi_t := \tau\otimes I_k + t\phi \qquad (t\ge 0).\]

\begin{cor}\label{cor:mollify}
Under the assumptions above, the following hold.
\begin{enumerate}
\item[a.] If $\phi_t$ is asymptotically associated to $\bspi$ for some $t  > 0$, then this holds for all $t > 0$, and we have
\[0 \le \rmh_{\bspi}(\phi_t) \le k\cdot \log (1 + t\cdot \tr_k \phi(1)) \qquad (t \ge 0).\]
\item[b.] If $\phi_t$ is not asymptotically associated to $\bspi$ for any $t > 0$, then $\rmh_{\bspi}(\phi_t) = -\infty$ for every $t > 0$.
\end{enumerate}
\end{cor}

\begin{proof}
The linearity of the Lebesgue decomposition gives
\[(\phi_t)_{\rm{ac}} = \tau\otimes I_k + t \phi_{\rm{ac}} \qquad \hbox{and} \qquad (\phi_t)_{\rm{sing}} = t\phi_{\rm{sing}}\]
for every $t \ge 0$.  Therefore, if $\phi_t$ is asymptotically associated to $\bspi$ for some $t > 0$, then so is $\phi_{\rm{sing}}$, and hence so is $\phi_s$ for every other $s$, by two applications of Corollary~\ref{cor:singular-part-controls}.

Now the lower bound on $\rmh_{\bspi}(\phi_t)$ in part (a) follows from Corollary~\ref{cor:det-form}(c) and Proposition~\ref{prop:max-ent}, because $\phi_t \ge \tau\otimes I_k$ in the positive definite ordering.  On the other hand, since $\Delta_{\tau\otimes \tr_k}(1\otimes I_k) = 1$, the element $1\otimes I_k$ of $\rmM_k(\A)$ is allowed inside the infimum of the variational principle from Proposition~\ref{prop:det-var-princ}. Combined with Theorem~\ref{mainthm:det-form}, this gives
\[\rmh_{\bspi}(\phi_t) = \log \Delta_\tau \phi_t \le k\cdot \log \langle \phi_t,1\otimes I_k\rangle.\]
This gives the desired upper bound on $\rmh_{\bspi}(\phi_t)$, because
\[\langle \phi_t,1\otimes I_k\rangle = \langle \tau\otimes I_k,1\otimes I_k\rangle + t\langle \phi,1\otimes I_k\rangle = 1 + t\cdot\tr_k \phi(1).\]

Finally, part (b) follows from Definition~\ref{dfn:APent}, applied when the sets $\X(\pi_n,O)$ are empty for some sufficiently small neighbourhood $O$ of $\phi_t$.
\end{proof}

\begin{rmk}
Alternatively, one can prove Corollary~\ref{cor:mollify}(a) using an extension of the concentration result in Lemma~\ref{lem:conc0} rather than the full strength of Theorem~\ref{mainthm:det-form}.  Referring to the case when $k=1$ and $t=1$ for simplicity, the idea is as follows.  Let $\pi$ be a representation of large dimension $d$, and let $x \in \bbC^{\oplus d}$ be such that $\Phi^\pi_x$ is close to $\phi$. Using essentially the same proof as for Lemma~\ref{lem:conc0}, one can show that, according to the measure $\s_{2d-1}$, most unit vectors $y \in \rmS^{2d-1}$ satisfy $\Phi^\pi_{[x,y]} \approx \rm{diag}(\phi,\tau)$, and hence $\Phi^\pi_{x+y} \approx \tau + \phi$.  By using polar coordinates to integrate over $y$ with respect to $\vol_{2d}$, this fact turns into the same lower bound as in Corollary~\ref{cor:mollify}(a). \fin
\end{rmk}

Now we can see how the function $\rmh_{\bspi}$ determines topological upper limits.

\begin{cor}\label{cor:h-pi-and-strong-quot}
Assume that $\tr_{d_n}\circ \pi \to\tau$.  Then
\begin{align*}
\rm{T}\limsup_{n\to\infty} \S_k(\pi_n) &= \{\phi \in \S_k(\A):\ \rmh_{\bspi}(\phi_t) \ge 0\ \forall t \ge 0\}\\
&= \{\phi \in \S_k(\A):\ \rmh_{\bspi}(\phi_t) \to 0\ \hbox{as}\ t\downarrow 0\}
\end{align*} 
for each positive integer $k$. Consequently, $\bspi$ strong-quotient converges if and only if we have $\rmh_{\bspi'} = \rmh_{\bspi}$ whenever $\bspi'$ is a subsequence of $\bspi$. \qed
\end{cor}

\subsubsection*{\emph{Additivity and concavity}}

In ergodic theory, Kolmogorov--Sinai entropy for single transformations is additive under Cartesian products~\cite[Thm. 4.23]{Walters--book}.  By contrast, sofic entropy is subadditive for joinings, but it may fail to be additive, even for a product joining~\cite{Aus--soficentadd}.  The same phenomena hold for AP entropy, for essentially the same reasons.  Let us assume again that $\tr_{d_n}\circ \pi_n \to \tau$ and that $\bspi$ strong-quotient converges; if these conditions fail then we can always pass to a subsequence that satisfies them.  

\begin{cor}\label{cor:AP-ent-subadd}
Let $\phi \in \frL(\A,\rmM_k)_+$, let $\psi \in \frL(\A,\rmM_\ell)_+$, and let $\theta$ be a joining of them (recall Definition~\ref{dfn:joining}).  Then
\begin{equation}\label{eq:AP-ent-subadd}
\rmh_{\bspi}(\theta) \le \rmh_{\bspi}(\phi) + \rmh_{\bspi}(\psi).
\end{equation}
If $\theta_{\rm{ac}} = \rm{diag}(\phi_{\rm{ac}},\psi_{\rm{ac}})$, then equality holds provided $\theta$ is asymptotically associated to $\bspi$.
\end{cor}

\begin{proof}
If $\theta$ is not asymptotically associated to $\bspi$, then $\rmh_{\bspi}(\theta) = -\infty$ and there is nothing left to prove.  So assume that $\theta$ is asymptotically associated to $\bspi$.  Both $\phi$ and $\psi$ are associated to $\pi_\theta$, so these are also asymptotically associated to $\bspi$.  Now the AP entropies of all three maps are given by Theorem~\ref{mainthm:det-form}, and the desired conclusions follow from Proposition~\ref{prop:det-props}(d).
\end{proof}

\begin{rmk}
Alternatively, the subadditivity of $\rmh_{\bspi}$ may be proved by observing that, for any neighbourhoods $U$ of $\phi$ and $V$ of $\psi$, there is a neighbourhood $O$ of $\theta$ such that
\[\X(\pi,O)\subset \X(\pi,U)\times \X(\pi,V)\]
for any representation $\pi$.  Using this to compare volumes and inserting into the definition of AP entropy, it turns into~\eqref{eq:AP-ent-subadd}.  However, the case of equality in Corollary~\ref{cor:AP-ent-subadd} does seem to require a stronger ingredient such as Theorem~\ref{mainthm:det-form}. \fin
\end{rmk}

In the last part of Corollary~\ref{cor:AP-ent-subadd}, it is not enough to assume that $\phi$ and $\psi$ are both separately asymptotically associated to $\bspi$: see Example~\ref{ex:lim-set-not-convex} below.

Notions of entropy are often given by concave functions, so it is natural to ask whether the restrictions $\rmh_{\bspi}|\frL(\A,\rmM_k)_+$ have this property.  Under some conditions, they do.  We obtain the following from Theorem~\ref{mainthm:det-form} together with the log-concavity inequality~\eqref{eq:log-det-concave} (another consequence of Proposition~\ref{prop:det-props}).

\begin{cor}
Let $\phi,\psi \in \frL(\A,\rmM_k)_+$, and assume moreover that ${t\phi + (1-t)\psi}$ is asymptotically associated to $\bspi$ for every $t \in [0,1]$.  Then $\rmh_{\bspi}$ is concave along the line segment from $\phi$ to $\psi$:
\[\rmh_{\bspi}(t\phi + (1-t)\psi) \ge t\rmh_{\bspi}(\phi) + (1-t)\rmh_{\bspi}(\psi) \qquad (0\le t \le 1).\]
\qed
\end{cor}

However, the function $\rmh_{\bspi}$ need not be globally concave on $\frL(\A,\rmM_k)_+$, or even on its subset $\S_k(\A)$.  The problem is that the set $\lim_n\S_k(\pi_n)$ itself need not be convex.  We illustrate this by an extension of Example~\ref{ex:sigma-not-convex}.

\begin{ex}\label{ex:lim-set-not-convex}
Let $\G$ be a countable group with left regular representation $\l$.  Let $\pi$ be an irreducible representation of $\G$ such that (i) $2 \le \dim \pi < \infty$ and (ii) some element of $\S_1(\pi)$ is not approximately associated to $\l$.  Separately, let $(\rho_n)_{n\ge 1}$ be an AP sequence for $\G$ that strong-quotient converges to $\l$.  Suitable examples include randomly generated unitary or permutation representations of free groups, by the results of~\cite{ColMal14} and~\cite{BordCol19}, respectively; see also~\cite{Magee--survey}.

Now let $\pi_n:= \pi\oplus \rho_n$ for each $n$.  Checking the definitions shows that
\[\S_k(\pi_n) = \{t\phi + (1-t)\psi:\ \phi \in \S_k(\pi),\ \psi \in \S_k(\rho_n),\ 0\le t \le 1\}.\]
This is closed for every $k$ and $n$ because it is a continuous image of a finite-dimensional sphere.  One can now check that $(\pi_n)_{n\ge 1}$ strong-quotient converges to $\pi \oplus \l$, and so in particular
\[\S_k(\pi_n) \to \{t\phi + (1-t)\psi:\ \phi \in \S_k(\pi),\ \psi \in \ol{\S_k(\l)},\ 0\le t \le 1\}\]
in the Vietoris topology.  However, since $\pi$ is irreducible and not approximately associated to $\l$, it follows that $\lim_n\S_2(\pi_n)$ does not contain $\rm{diag}(\phi,\psi)$ when $\phi$ and $\psi$ are linearly independent elements of $\S_1(\pi)$.  Moreover, by the same reasoning as for Example~\ref{ex:sigma-not-convex}, it also follows that $\lim_n \S_1(\pi_n)$ does not contain $(\phi + \psi)/2$, so $\lim_n \S_1(\pi_n)$ is not convex.  Finally, Corollary~\ref{cor:mollify}(a) gives that $\rmh_{\bspi}(\phi + \tau)$ and $\rmh_{\bspi}(\psi + \tau)$ are both non-negative, but we have
\[\rmh_{\bspi}(\rm{diag}(\phi + \tau,\psi+\tau)) = \rmh_{\bspi}((\phi + \tau)/2 + (\psi + \tau)/2) = -\infty.\]
\qed
\end{ex}

Nevertheless, if $(\pi_n)_{n\ge 1}$ strong-quotient converges, then $\lim_n \S_k(\pi_n)$ is at least star-shaped around the element $\tau\otimes I_k$ (or any other $\l$-normal element of $\S_k(\A)$).  This is a consequence of Corollary~\ref{cor:singular-part-controls}, because $\tau\otimes I_k$ does not contribute to the singular part in a convex combination.  Overall, the limit set $\lim_n \S_k(\pi_n)$ can fail to be convex only because of its excess outside the lower bound on this set provided by Corollary~\ref{cor:char-and-weak-global}, as illustrated by Example~\ref{ex:lim-set-not-convex}.

Since $\rmh_{\bspi}$ may not be concave on the whole of $\S_k(\A)$, it may not be recoverable from its Legendre transform.  Nevertheless, it could be interesting to investigate how various features of the sequence $(\pi_n)_{n\ge 1}$ are reflected by that transform, and how it compares with the Legendre transform of $\log \Delta$ itself. See~\cite[Sec. I.6]{SimSMLG} for a general account of infinite-dimensional Legendre transforms with a view towards statistical mechanics, or~\cite{Chung13} for the resulting variational principle in the case of sofic entropy.  In the analogous setting of modes of convergence for graphs of uniformly bounded degree (see Subsection~\ref{subs:strong-quotient} and Remark~\ref{rmk:LD-conv}), convergence of these Legendre transforms would correspond to `right convergence'~\cite{BolRio11,BorChaKahLov13}.

\subsection{Further remarks}\label{subs:C-rmks}

\subsubsection*{\emph{Comparison with previous work}}

Just like Theorem~\ref{mainthm:amenable}, Theorem~\ref{mainthm:det-form} has a number of predecessors in the literature.  Some of the first that lie beyond the discussion in Subsection~\ref{subs:A-rmks} are Lyons' calculations in~\cite{Lyons05,Lyons10}.  These concern the problem of asymptotically counting spanning trees along sequences of finite connected graphs using their random weak limits.  Lyons shows that the resulting `tree entropy' of the random limit graph is given by a Fuglede--Kadison determinant of its Laplacian.  His setting does not require a group action, but it yields results for sofic groups as a special case.

More recent examples are continuations of Deninger's work on the entropies of certain algebraically-defined measure-preserving systems, but now for sofic groups and sofic entropy.  Kerr and Li took the first step in this direction with~\cite[Thm. 7.1]{KerLi11b}.  Then Hayes gave a more complete result in~\cite{Hayes16}, showing that the sofic entropy of $\boldsymbol{X}_f$ equals the Fuglede--Kadison determinant of $f \in \rmM_k(\bbZ[\G])$ whenever $\G$ is sofic and $\l^{(k)}(f)$ is injective.  An alternative proof with further refinements is included in~\cite{Hayes21}.  On the other hand, another result from~\cite{Hayes16} is that the Li--Thom theorem from~\cite{LiTho14} does not generalize fully to sofic groups.

Alongside those papers, Hayes has developed other connections between sofic entropy and representation theory.  In~\cite{Hayes18} he proved that an arbitrary measure-preserving $\G$-system can have completely positive sofic entropy only if its Koopman representation is $\l$-normal.  In~\cite{Hayes17}, he computed the sofic entropy of a stationary Gaussian process over $\G$ in terms of the real orthogonal representation that defines its first chaos, generalizing a result from~\cite{HamParTho08} about single transformations.  The main theorem in~\cite{Hayes17} is worth comparing with the way in which $\phi_{\rm{sing}}$ and $\phi_{\rm{ac}}$ determine whether $\rmh_{\bspi}(\phi)$ equals $-\infty$ in Corollary~\ref{cor:det-form} above.

\subsubsection*{\emph{Other aspects of Szeg\H{o}'s theorem}}

The setting of Theorem~\ref{mainthm:det-form} has taken us quite far from our original motivation in the form of Szeg\H{o}'s theorem.  AP entropy is not defined as a limit of finite-dimensional determinants, and related data such as Verblunsky coefficients have no obvious meaning in this generality.

However, some of those finer aspects of Szeg\H{o}'s theorem make a return in a sequel to the present paper~\cite{APE4b}.  That paper studies random AP sequences, and in particular an annealed version of AP entropy that can be defined using these.  In the special case of uniformly random finite-dimensional representations of free groups, this is a representation theoretic analog of Bowen's annealed sofic entropy (formerly called the `f-invariant') from~\cite{Bowen10free,Bowen10c}.  Like annealed sofic entropy, this instance of annealed AP entropy admits a precise formula.

In studying this entropy and its formula, many features of the theory of orthogonal polynomials on $\bbT$ reappear.  For example, sequences of `generalized Verblunsky coefficients' can be used to parametrize positive definite functions over free groups.  Rather than requiring a total ordering of the group, these generalized Verblunsky coefficients depend on the fact that free groups have tree-like Cayley graphs.  One of the first main theorems about annealed AP entropy is an infinite series expansion of it in terms of those coefficients.  This offers a more complete analog of Szeg\H{o}'s limit theorem for positive definite functions on free groups, with one entirely new feature: an additional term called `zeroth-order' entropy that reflects the non-amenability of the groups.

We leave further details to~\cite{APE4b}, which also develops applications to large deviations for tuples of random matrices.

\bibliography{bibfile}{}

@article {Aus--soficentadd,
	AUTHOR = {Austin, Tim},
	TITLE = {Additivity properties of sofic entropy and measures on model
	spaces},
	JOURNAL = {Forum Math. Sigma},
	FJOURNAL = {Forum of Mathematics. Sigma},
	VOLUME = {4},
	YEAR = {2016},
	PAGES = {e25, 79},
	ISSN = {2050-5094},
	MRCLASS = {37A35 (37A50 60B10 60B15 82B20)},
	MRNUMBER = {3542515},
}

@unpublished{APE4b,
	author = {Austin, Tim},
	title = {Annealed almost periodic entropy},
	note = {Available at \verb|arXiv.org|: 2507.08909},
	}

@article{AusPod--percolative,
    AUTHOR = {Austin, Tim and Podder, Moumanti},
     TITLE = {Gibbs measures over locally tree-like graphs and percolative
              entropy over infinite regular trees},
   JOURNAL = {J. Stat. Phys.},
  FJOURNAL = {Journal of Statistical Physics},
    VOLUME = {170},
      YEAR = {2018},
    NUMBER = {5},
     PAGES = {932--951},
      ISSN = {0022-4715,1572-9613},
   MRCLASS = {82B20 (37A15 37A25 37A50 60B10 60K35)},
  MRNUMBER = {3767001},
       DOI = {10.1007/s10955-018-1959-3},
       URL = {https://doi.org/10.1007/s10955-018-1959-3},
}

@unpublished{AbeEle11,
	author = {Ab\'ert, Mikl\'os and Elek, G\'abor},
	title = {The Space of Actions, Partition Metric and
	Combinatorial Rigidity},
	year = {2011},
	note = {Available online at \verb|arXiv.org|: 1108.2147},
	}

@article {AirBowLin22,
    AUTHOR = {Airey, Dylan and Bowen, Lewis and Lin, Yuqing Frank},
     TITLE = {A topological dynamical system with two different positive
              sofic entropies},
   JOURNAL = {Trans. Amer. Math. Soc. Ser. B},
  FJOURNAL = {Transactions of the American Mathematical Society. Series B},
    VOLUME = {9},
      YEAR = {2022},
     PAGES = {35--98},
      ISSN = {2330-0000},
   MRCLASS = {37A35},
  MRNUMBER = {4383230},
MRREVIEWER = {Bingbing\ Liang},
}

@UNPUBLISHED{Alp--randomorder,
  AUTHOR =       {Alpeev, A.},
  TITLE =        {Random ordering formula for sofic and {R}okhlin entropy of {G}ibbs measures},
  NOTE =         {preprint, available online at \verb|arXiv.org| 1705.08559},
}

@article {Ara7576,
    AUTHOR = {Araki, Huzihiro},
     TITLE = {Relative entropy of states of von {N}eumann algebras},
   JOURNAL = {Publ. Res. Inst. Math. Sci.},
  FJOURNAL = {Kyoto University. Research Institute for Mathematical
              Sciences. Publications},
    VOLUME = {11},
      YEAR = {1975/76},
    NUMBER = {3},
     PAGES = {809--833},
      ISSN = {0034-5318,1663-4926},
   MRCLASS = {46L10},
  MRNUMBER = {425631},
MRREVIEWER = {M.\ Takesaki},
       DOI = {10.2977/prims/1195191148},
       URL = {https://doi.org/10.2977/prims/1195191148},
}

@article {Ara7778,
    AUTHOR = {Araki, Huzihiro},
     TITLE = {Relative entropy for states of von {N}eumann algebras. {II}},
   JOURNAL = {Publ. Res. Inst. Math. Sci.},
  FJOURNAL = {Kyoto University. Research Institute for Mathematical
              Sciences. Publications},
    VOLUME = {13},
      YEAR = {1977/78},
    NUMBER = {1},
     PAGES = {173--192},
      ISSN = {0034-5318,1663-4926},
   MRCLASS = {46L10},
  MRNUMBER = {454656},
MRREVIEWER = {M.\ Takesaki},
       DOI = {10.2977/prims/1195190105},
       URL = {https://doi.org/10.2977/prims/1195190105},
}

@article {Arv67,
    AUTHOR = {Arveson, William B.},
     TITLE = {Analyticity in operator algebras},
   JOURNAL = {Amer. J. Math.},
  FJOURNAL = {American Journal of Mathematics},
    VOLUME = {89},
      YEAR = {1967},
     PAGES = {578--642},
      ISSN = {0002-9327,1080-6377},
   MRCLASS = {46.65 (47.00)},
  MRNUMBER = {223899},
MRREVIEWER = {L.\ Longdon},
       DOI = {10.2307/2373237},
       URL = {https://doi.org/10.2307/2373237},
}

@article {Arv77,
	AUTHOR = {Arveson, William},
	TITLE = {Notes on extensions of {C}$\sp*$-algebras},
	JOURNAL = {Duke Math. J.},
	FJOURNAL = {Duke Mathematical Journal},
	VOLUME = {44},
	YEAR = {1977},
	NUMBER = {2},
	PAGES = {329--355},
	ISSN = {0012-7094,1547-7398},
	MRCLASS = {46L05 (47C10)},
	MRNUMBER = {438137},
	MRREVIEWER = {C.\ A.\ Akemann},
	}

@article {BalSka16,
    AUTHOR = {Balci, G\"ul and Skandalis, Georges},
     TITLE = {Traces on group {$C^*$}-algebras, sofic groups and {L}\"uck's
              conjecture},
   JOURNAL = {Expo. Math.},
  FJOURNAL = {Expositiones Mathematicae},
    VOLUME = {34},
      YEAR = {2016},
    NUMBER = {4},
     PAGES = {353--363},
      ISSN = {0723-0869,1878-0792},
   MRCLASS = {46L10 (22D15)},
  MRNUMBER = {3578003},
MRREVIEWER = {Robert\ S.\ Doran},
       DOI = {10.1016/j.exmath.2015.12.008},
       URL = {https://doi.org/10.1016/j.exmath.2015.12.008},
}

@article {BlecLab06,
    AUTHOR = {Blecher, David P. and Labuschagne, Louis E.},
     TITLE = {Characterizations of noncommutative {$H^\infty$}},
   JOURNAL = {Integral Equations Operator Theory},
  FJOURNAL = {Integral Equations and Operator Theory},
    VOLUME = {56},
      YEAR = {2006},
    NUMBER = {3},
     PAGES = {301--321},
      ISSN = {0378-620X,1420-8989},
   MRCLASS = {46L51 (46J15 46K50 46L52 47L45 47L75)},
  MRNUMBER = {2270840},
MRREVIEWER = {Baruch\ Solel},
       DOI = {10.1007/s00020-006-1425-5},
       URL = {https://doi.org/10.1007/s00020-006-1425-5},
}

@incollection {BlecLab07b,
    AUTHOR = {Blecher, David P. and Labuschagne, Louis E.},
     TITLE = {Von {N}eumann algebraic {$H^p$} theory},
 BOOKTITLE = {Function spaces},
    SERIES = {Contemp. Math.},
    VOLUME = {435},
     PAGES = {89--114},
 PUBLISHER = {Amer. Math. Soc., Providence, RI},
      YEAR = {2007},
      ISBN = {978-0-8218-4061-0},
   MRCLASS = {46L51 (46J15 46L10)},
  MRNUMBER = {2359421},
MRREVIEWER = {David\ Sherman},
       DOI = {10.1090/conm/435/08369},
       URL = {https://doi.org/10.1090/conm/435/08369},
}

@article {BolRio11,
	AUTHOR = {Bollob\'{a}s, B\'{e}la and Riordan, Oliver},
	TITLE = {Sparse graphs: metrics and random models},
	JOURNAL = {Random Structures Algorithms},
	FJOURNAL = {Random Structures \& Algorithms},
	VOLUME = {39},
	YEAR = {2011},
	NUMBER = {1},
	PAGES = {1--38},
	ISSN = {1042-9832},
	MRCLASS = {05C80 (60C05)},
	MRNUMBER = {2839983},
	MRREVIEWER = {Michael Krivelevich},
}

@article {BordCol19,
    AUTHOR = {Bordenave, Charles and Collins, Beno\^it},
     TITLE = {Eigenvalues of random lifts and polynomials of random
              permutation matrices},
   JOURNAL = {Ann. of Math. (2)},
  FJOURNAL = {Annals of Mathematics. Second Series},
    VOLUME = {190},
      YEAR = {2019},
    NUMBER = {3},
     PAGES = {811--875},
      ISSN = {0003-486X,1939-8980},
   MRCLASS = {60B20 (05C80 46L54)},
  MRNUMBER = {4024563},
       DOI = {10.4007/annals.2019.190.3.3},
       URL = {https://0-doi-org.pugwash.lib.warwick.ac.uk/10.4007/annals.2019.190.3.3},
}

@article {BorChaGam17,
	AUTHOR = {Borgs, Christian and Chayes, Jennifer and Gamarnik, David},
	TITLE = {Convergent sequences of sparse graphs: a large deviations
	approach},
	JOURNAL = {Random Structures Algorithms},
	FJOURNAL = {Random Structures \& Algorithms},
	VOLUME = {51},
	YEAR = {2017},
	NUMBER = {1},
	PAGES = {52--89},
	ISSN = {1042-9832},
	MRCLASS = {05C80 (05C60 60F10)},
	MRNUMBER = {3668846},
	MRREVIEWER = {Christian Lavault},
}

@article {BorChaKahLov13,
	AUTHOR = {Borgs, Christian and Chayes, Jennifer and Kahn, Jeff and
	Lov\'{a}sz, L\'{a}szl\'{o}},
	TITLE = {Left and right convergence of graphs with bounded degree},
	JOURNAL = {Random Structures Algorithms},
	FJOURNAL = {Random Structures \& Algorithms},
	VOLUME = {42},
	YEAR = {2013},
	NUMBER = {1},
	PAGES = {1--28},
	ISSN = {1042-9832},
	MRCLASS = {05C80 (60C05)},
	MRNUMBER = {2999210},
	MRREVIEWER = {Christian Lavault},
	DOI = {10.1002/rsa.20414},
	URL = {https://doi.org/10.1002/rsa.20414},
}

@article {Bowen10,
    AUTHOR = {Bowen, Lewis},
     TITLE = {Measure conjugacy invariants for actions of countable sofic
              groups},
   JOURNAL = {J. Amer. Math. Soc.},
  FJOURNAL = {Journal of the American Mathematical Society},
    VOLUME = {23},
      YEAR = {2010},
    NUMBER = {1},
     PAGES = {217--245},
      ISSN = {0894-0347},
   MRCLASS = {37A35 (28D05 37A15 37A20)},
  MRNUMBER = {2552252 (2011b:37010)},
MRREVIEWER = {Konstantin Medynets},
}

@article {Bowen10c,
    AUTHOR = {Bowen, Lewis},
     TITLE = {The ergodic theory of free group actions: entropy and the
              {$f$}-invariant},
   JOURNAL = {Groups Geom. Dyn.},
  FJOURNAL = {Groups, Geometry, and Dynamics},
    VOLUME = {4},
      YEAR = {2010},
    NUMBER = {3},
     PAGES = {419--432},
      ISSN = {1661-7207},
   MRCLASS = {37A35 (28D05 37A15)},
  MRNUMBER = {2653969 (2011h:37007)},
MRREVIEWER = {Michael Hochman},
}

@article {Bowen10free,
    AUTHOR = {Bowen, Lewis},
     TITLE = {A measure-conjugacy invariant for free group actions},
   JOURNAL = {Ann. of Math. (2)},
  FJOURNAL = {Annals of Mathematics. Second Series},
    VOLUME = {171},
      YEAR = {2010},
    NUMBER = {2},
     PAGES = {1387--1400},
      ISSN = {0003-486X},
     CODEN = {ANMAAH},
   MRCLASS = {37A35 (28D20 37A15)},
  MRNUMBER = {2630067 (2012c:37009)},
MRREVIEWER = {Jean-Ren{\'e} Chazottes},
}

@article {Bowen--survey,
	AUTHOR = {Bowen, Lewis},
	TITLE = {Examples in the entropy theory of countable group actions},
	JOURNAL = {Ergodic Theory Dynam. Systems},
	FJOURNAL = {Ergodic Theory and Dynamical Systems},
	VOLUME = {40},
	YEAR = {2020},
	NUMBER = {10},
	PAGES = {2593--2680},
	ISSN = {0143-3857,1469-4417},
	MRCLASS = {37A35},
	MRNUMBER = {4138907},
	MRREVIEWER = {Yong\ Ji},
	}

@book {BroOza08,
    AUTHOR = {Brown, Nathanial P. and Ozawa, Narutaka},
     TITLE = {{$C^*$}-Algebras and Finite-Dimensional Approximations},
    SERIES = {Graduate Studies in Mathematics},
    VOLUME = {88},
 PUBLISHER = {American Mathematical Society, Providence, RI},
      YEAR = {2008},
     PAGES = {xvi+509},
      ISBN = {978-0-8218-4381-9; 0-8218-4381-8},
   MRCLASS = {46L05 (43A07 46-02 46L10)},
  MRNUMBER = {2391387},
MRREVIEWER = {Mikael\ R\o rdam},
       DOI = {10.1090/gsm/088},
       URL = {https://doi.org/10.1090/gsm/088},
}

@article {BurKec20,
	AUTHOR = {Burton, Peter J. and Kechris, Alexander S.},
	TITLE = {Weak containment of measure-preserving group actions},
	JOURNAL = {Ergodic Theory Dynam. Systems},
	FJOURNAL = {Ergodic Theory and Dynamical Systems},
	VOLUME = {40},
	YEAR = {2020},
	NUMBER = {10},
	PAGES = {2681--2733},
	ISSN = {0143-3857,1469-4417},
	MRCLASS = {37A15 (20F65 22F10 37A20 43A07 46M07)},
	MRNUMBER = {4138908},
	MRREVIEWER = {Douglas\ P.\ Dokken},
	}

@article {Choi74,
    AUTHOR = {Choi, Man Duen},
     TITLE = {A {S}chwarz inequality for positive linear maps on
              {$C\sp{\ast} \ $}-algebras},
   JOURNAL = {Illinois J. Math.},
  FJOURNAL = {Illinois Journal of Mathematics},
    VOLUME = {18},
      YEAR = {1974},
     PAGES = {565--574},
      ISSN = {0019-2082},
   MRCLASS = {46L05},
  MRNUMBER = {355615},
MRREVIEWER = {J.\ W.\ Bunce},
       URL = {http://projecteuclid.org/euclid.ijm/1256051007},
}

@article{Chung13,
	author = {Chung, N.-P.},
	title = {Topological pressure and the variational principle for sofic groups},
	journal = {Ergodic Theory Dynam. Systems},
	year = {2013},
	volume = {33},
	number = {5},
	pages = {1363--1390},
}

@article {ColMal14,
    AUTHOR = {Collins, Beno\^it and Male, Camille},
     TITLE = {The strong asymptotic freeness of {H}aar and deterministic
              matrices},
   JOURNAL = {Ann. Sci. \'Ec. Norm. Sup\'er. (4)},
  FJOURNAL = {Annales Scientifiques de l'\'Ecole Normale Sup\'erieure.
              Quatri\`eme S\'erie},
    VOLUME = {47},
      YEAR = {2014},
    NUMBER = {1},
     PAGES = {147--163},
      ISSN = {0012-9593,1873-2151},
   MRCLASS = {60B20 (46L54)},
  MRNUMBER = {3205602},
	}

@article {Con76,
    AUTHOR = {Connes, A.},
     TITLE = {Classification of injective factors. {C}ases {$\rm{II}\sb{1},$}
              {$\rm{II}\sb{\infty },$} {$\rm{III}\sb{\lambda },$} {$\lambda \not=1$}},
   JOURNAL = {Ann. of Math. (2)},
  FJOURNAL = {Annals of Mathematics. Second Series},
    VOLUME = {104},
      YEAR = {1976},
    NUMBER = {1},
     PAGES = {73--115},
      ISSN = {0003-486X},
   MRCLASS = {46L10},
  MRNUMBER = {454659},
MRREVIEWER = {Fran\c cois\ Combes},
       DOI = {10.2307/1971057},
       URL = {https://doi.org/10.2307/1971057},
}

@article {Con7273,
    AUTHOR = {Conze, J. P.},
     TITLE = {Entropie d'un groupe ab\'elien de transformations},
   JOURNAL = {Z. Wahrscheinlichkeitstheorie und Verw. Gebiete},
  FJOURNAL = {Zeitschrift f\"ur Wahrscheinlichkeitstheorie und Verwandte
              Gebiete},
    VOLUME = {25},
      YEAR = {1972/73},
     PAGES = {11--30},
   MRCLASS = {28A65},
  MRNUMBER = {335754},
MRREVIEWER = {L.\ Sucheston},
       DOI = {10.1007/BF00533332},
       URL = {https://doi.org/10.1007/BF00533332},
}

@book {CovTho06,
    AUTHOR = {Cover, Thomas M. and Thomas, Joy A.},
     TITLE = {Elements of {I}nformation {T}heory},
   EDITION = {Second},
 PUBLISHER = {Wiley-Interscience [John Wiley \& Sons]},
   ADDRESS = {Hoboken, NJ},
      YEAR = {2006},
     PAGES = {xxiv+748},
      ISBN = {978-0-471-24195-9; 0-471-24195-4},
   MRCLASS = {00-01 (94-01)},
  MRNUMBER = {2239987 (2007h:00002)},
}

@article {Den06,
	AUTHOR = {Deninger, Christopher},
	TITLE = {Fuglede-{K}adison determinants and entropy for actions of
	discrete amenable groups},
	JOURNAL = {J. Amer. Math. Soc.},
	FJOURNAL = {Journal of the American Mathematical Society},
	VOLUME = {19},
	YEAR = {2006},
	NUMBER = {3},
	PAGES = {737--758},
	ISSN = {0894-0347,1088-6834},
	MRCLASS = {37A35 (22D25 37B40 43A07 46L55)},
	MRNUMBER = {2220105},
	MRREVIEWER = {Thomas\ Ward},
	DOI = {10.1090/S0894-0347-06-00519-4},
	URL = {https://0-doi-org.pugwash.lib.warwick.ac.uk/10.1090/S0894-0347-06-00519-4},
}

@article {Den09,
	AUTHOR = {Deninger, Christopher},
	TITLE = {Mahler measures and {F}uglede-{K}adison determinants},
	JOURNAL = {M\"unster J. Math.},
	FJOURNAL = {M\"unster Journal of Mathematics},
	VOLUME = {2},
	YEAR = {2009},
	PAGES = {45--63},
	ISSN = {1867-5778,1867-5786},
	MRCLASS = {37A15 (11R06 28D20 37A35 37A45 46L10)},
	MRNUMBER = {2545607},
	MRREVIEWER = {Thomas\ Ward},
}

@article {DenSch07,
    AUTHOR = {Deninger, Christopher and Schmidt, Klaus},
     TITLE = {Expansive algebraic actions of discrete residually finite
              amenable groups and their entropy},
   JOURNAL = {Ergodic Theory Dynam. Systems},
  FJOURNAL = {Ergodic Theory and Dynamical Systems},
    VOLUME = {27},
      YEAR = {2007},
    NUMBER = {3},
     PAGES = {769--786},
      ISSN = {0143-3857},
   MRCLASS = {37A35 (37B40)},
  MRNUMBER = {MR2322178 (2008d:37009)},
MRREVIEWER = {A. I. Danilenko},
}

@book {Dix--Cstar,
    AUTHOR = {Dixmier, Jacques},
     TITLE = {{$C\sp*$}-Algebras},
    SERIES = {North-Holland Mathematical Library},
    VOLUME = {Vol. 15},
      NOTE = {Translated from the French by Francis Jellett},
 PUBLISHER = {North-Holland Publishing Co., Amsterdam-New York-Oxford},
      YEAR = {1977},
     PAGES = {xiii+492},
      ISBN = {0-7204-0762-1},
   MRCLASS = {46L05},
  MRNUMBER = {458185},
}

@book {Dix--vN,
    AUTHOR = {Dixmier, Jacques},
     TITLE = {{V}on {N}eumann algebras},
    SERIES = {North-Holland Mathematical Library},
    VOLUME = {27},
   EDITION = {French},
      NOTE = {With a preface by E. C. Lance},
 PUBLISHER = {North-Holland Publishing Co., Amsterdam-New York},
      YEAR = {1981},
     PAGES = {xxxviii+437},
      ISBN = {0-444-86308-7},
   MRCLASS = {46-02 (16A30 22D25 46Lxx)},
  MRNUMBER = {641217},
}

@article {DodMat98,
    AUTHOR = {Dodziuk, Jozef and Mathai, Varghese},
     TITLE = {Approximating {$L^2$} invariants of amenable covering spaces:
              a combinatorial approach},
   JOURNAL = {J. Funct. Anal.},
  FJOURNAL = {Journal of Functional Analysis},
    VOLUME = {154},
      YEAR = {1998},
    NUMBER = {2},
     PAGES = {359--378},
      ISSN = {0022-1236,1096-0783},
   MRCLASS = {58G26 (43A07 57M10)},
  MRNUMBER = {1612713},
MRREVIEWER = {Tadeusz\ Januszkiewicz},
       DOI = {10.1006/jfan.1997.3205},
       URL = {https://doi.org/10.1006/jfan.1997.3205},
}

@article {Dok84,
    AUTHOR = {{Doktorski\u{\i}}, R. Ya.},
     TITLE = {Generalization of the {{S}zeg\H o{}} limit theorem to the
              multidimensional case},
   JOURNAL = {Sibirsk. Mat. Zh.},
  FJOURNAL = {Akademiya Nauk SSSR. Sibirskoe Otdelenie. Sibirski\u i\
              Matematicheski\u i\ Zhurnal},
    VOLUME = {25},
      YEAR = {1984},
    NUMBER = {5},
     PAGES = {20--29},
      ISSN = {0037-4474},
   MRCLASS = {47B35 (32A40)},
  MRNUMBER = {762235},
MRREVIEWER = {J.\ T.\ Davidov},
}

@preamble{
   "\def\cprime{$'$} "
}

@article {Dye52,
    AUTHOR = {Dye, H. A.},
     TITLE = {The {R}adon-{N}ikod\'{y}m theorem for finite rings of
              operators},
   JOURNAL = {Trans. Amer. Math. Soc.},
  FJOURNAL = {Transactions of the American Mathematical Society},
    VOLUME = {72},
      YEAR = {1952},
     PAGES = {243--280},
      ISSN = {0002-9947,1088-6850},
   MRCLASS = {46.3X},
  MRNUMBER = {45954},
MRREVIEWER = {F.\ I.\ Mautner},
       DOI = {10.2307/1990754},
       URL = {https://doi.org/10.2307/1990754},
}

@article {EleSza05,
	AUTHOR = {Elek, G\'abor and Szab\'o, Endre},
	TITLE = {Hyperlinearity, essentially free actions and
	{$L^2$}-invariants. {T}he sofic property},
	JOURNAL = {Math. Ann.},
	FJOURNAL = {Mathematische Annalen},
	VOLUME = {332},
	YEAR = {2005},
	NUMBER = {2},
	PAGES = {421--441},
	ISSN = {0025-5831,1432-1807},
	MRCLASS = {43A07 (46L10)},
	MRNUMBER = {2178069},
	MRREVIEWER = {M\u ad\u alina\ Roxana\ Buneci},
	DOI = {10.1007/s00208-005-0640-8},
	URL = {https://0-doi-org.pugwash.lib.warwick.ac.uk/10.1007/s00208-005-0640-8},
}

@article {Fel60b,
    AUTHOR = {Fell, J. M. G.},
     TITLE = {{$C\sp{\ast} $}-algebras with smooth dual},
   JOURNAL = {Illinois J. Math.},
  FJOURNAL = {Illinois Journal of Mathematics},
    VOLUME = {4},
      YEAR = {1960},
     PAGES = {221--230},
      ISSN = {0019-2082},
   MRCLASS = {46.65},
  MRNUMBER = {124754},
MRREVIEWER = {A.\ A.\ Kirillov},
       URL = {http://projecteuclid.org/euclid.ijm/1255455865},
}

@article {Fel62,
	AUTHOR = {Fell, J. M. G.},
	TITLE = {Weak containment and induced representations of groups},
	JOURNAL = {Canadian J. Math.},
	FJOURNAL = {Canadian Journal of Mathematics. Journal Canadien de
	Math\'ematiques},
	VOLUME = {14},
	YEAR = {1962},
	PAGES = {237--268},
	ISSN = {0008-414X,1496-4279},
	MRCLASS = {22.60},
	MRNUMBER = {150241},
	MRREVIEWER = {R.\ T.\ Prosser},
	}

@article {Fur67,
    AUTHOR = {Furstenberg, Harry},
     TITLE = {Disjointness in ergodic theory, minimal sets, and a problem in
              {D}iophantine approximation},
   JOURNAL = {Math. Systems Theory},
  FJOURNAL = {Mathematical Systems Theory. An International Journal on
              Mathematical Computing Theory},
    VOLUME = {1},
      YEAR = {1967},
     PAGES = {1--49},
      ISSN = {0025-5661},
   MRCLASS = {28.70 (10.00)},
  MRNUMBER = {MR0213508 (35 \#4369)},
MRREVIEWER = {W. Parry},
}

@book {GameUA,
    AUTHOR = {Gamelin, Theodore W.},
     TITLE = {Uniform algebras},
 PUBLISHER = {Prentice-Hall, Inc., Englewood Cliffs, NJ},
      YEAR = {1969},
     PAGES = {xiii+257},
   MRCLASS = {46J10 (30A98)},
  MRNUMBER = {410387},
MRREVIEWER = {F.\ T.\ Birtel},
}

@article {GlaThoWei00,
    AUTHOR = {Glasner, E. and Thouvenot, J.-P. and Weiss, B.},
     TITLE = {Entropy theory without a past},
   JOURNAL = {Ergodic Theory Dynam. Systems},
  FJOURNAL = {Ergodic Theory and Dynamical Systems},
    VOLUME = {20},
      YEAR = {2000},
    NUMBER = {5},
     PAGES = {1355--1370},
      ISSN = {0143-3857,1469-4417},
   MRCLASS = {37A35 (28D20)},
  MRNUMBER = {1786718},
MRREVIEWER = {B.\ Kami\'nski},
       DOI = {10.1017/S0143385700000730},
       URL = {https://doi.org/10.1017/S0143385700000730},
}

@article {HaaSch07,
    AUTHOR = {Haagerup, Uffe and Schultz, Hanne},
     TITLE = {Brown measures of unbounded operators affiliated with a finite
              von {N}eumann algebra},
   JOURNAL = {Math. Scand.},
  FJOURNAL = {Mathematica Scandinavica},
    VOLUME = {100},
      YEAR = {2007},
    NUMBER = {2},
     PAGES = {209--263},
      ISSN = {0025-5521,1903-1807},
   MRCLASS = {46L54 (46L10 47A15 47B99)},
  MRNUMBER = {2339369},
MRREVIEWER = {Hari\ Bercovici},
       DOI = {10.7146/math.scand.a-15023},
       URL = {https://0-doi-org.pugwash.lib.warwick.ac.uk/10.7146/math.scand.a-15023},
}

@article {HamParTho08,
    AUTHOR = {Hamdan, D. and Parry, W. and Thouvenot, J.-P.},
     TITLE = {Shannon entropy for stationary processes and dynamical
              systems},
   JOURNAL = {Ergodic Theory Dynam. Systems},
  FJOURNAL = {Ergodic Theory and Dynamical Systems},
    VOLUME = {28},
      YEAR = {2008},
    NUMBER = {2},
     PAGES = {447--480},
      ISSN = {0143-3857},
   MRCLASS = {37A35 (28D20 37A50 60G15 94A15 94A17)},
  MRNUMBER = {2408387},
MRREVIEWER = {Jean-Ren\'{e} Chazottes},
       DOI = {10.1017/S0143385707001034},
       URL = {https://doi.org/10.1017/S0143385707001034},
}

@article {HatamiLovSze14,
	AUTHOR = {Hatami, Hamed and Lov\'{a}sz, L\'{a}szl\'{o} and Szegedy, Bal\'{a}zs},
	TITLE = {Limits of locally-globally convergent graph sequences},
	JOURNAL = {Geom. Funct. Anal.},
	FJOURNAL = {Geometric and Functional Analysis},
	VOLUME = {24},
	YEAR = {2014},
	NUMBER = {1},
	PAGES = {269--296},
	ISSN = {1016-443X},
	MRCLASS = {05C80 (05C50 60B10 60F05)},
	MRNUMBER = {3177383},
	MRREVIEWER = {David B. Penman},
	}

@article {Hayes16,
	AUTHOR = {Hayes, Ben},
	TITLE = {Fuglede-{K}adison determinants and sofic entropy},
	JOURNAL = {Geom. Funct. Anal.},
	FJOURNAL = {Geometric and Functional Analysis},
	VOLUME = {26},
	YEAR = {2016},
	NUMBER = {2},
	PAGES = {520--606},
	ISSN = {1016-443X,1420-8970},
	MRCLASS = {37A35 (22D25 22D40 22D45 28D15)},
	MRNUMBER = {3513879},
	}

@article {Hayes17,
	AUTHOR = {Hayes, Ben},
	TITLE = {Sofic entropy of {G}aussian actions},
	JOURNAL = {Ergodic Theory Dynam. Systems},
	FJOURNAL = {Ergodic Theory and Dynamical Systems},
	VOLUME = {37},
	YEAR = {2017},
	NUMBER = {7},
	PAGES = {2187--2222},
	ISSN = {0143-3857,1469-4417},
	MRCLASS = {37A35 (28D20 37A55 60B05)},
	MRNUMBER = {3693125},
	MRREVIEWER = {Chih-Hung\ Chang},
	DOI = {10.1017/etds.2016.6},
	URL = {https://0-doi-org.pugwash.lib.warwick.ac.uk/10.1017/etds.2016.6},
}

@article {Hayes18,
	AUTHOR = {Hayes, Ben},
	TITLE = {Polish models and sofic entropy},
	JOURNAL = {J. Inst. Math. Jussieu},
	FJOURNAL = {Journal of the Institute of Mathematics of Jussieu. JIMJ.
	Journal de l'Institut de Math\'ematiques de Jussieu},
	VOLUME = {17},
	YEAR = {2018},
	NUMBER = {2},
	PAGES = {241--275},
	ISSN = {1474-7480,1475-3030},
	MRCLASS = {37A35 (37A55 46L89 47A67)},
	MRNUMBER = {3773269},
	MRREVIEWER = {Francesco\ Fidaleo},
	}

@article {Hayes21,
 	AUTHOR = {Hayes, Ben},
 	TITLE = {Max-min theorems for weak containment, square summable
 	homoclinic points, and completely positive entropy},
 	JOURNAL = {Indiana Univ. Math. J.},
 	FJOURNAL = {Indiana University Mathematics Journal},
 	VOLUME = {70},
 	YEAR = {2021},
 	NUMBER = {4},
 	PAGES = {1221--1266},
 	ISSN = {0022-2518,1943-5258},
 	MRCLASS = {37A05 (37B10)},
 	MRNUMBER = {4318473},
 	MRREVIEWER = {Antonio\ Linero Bas},
 	DOI = {10.1512/iumj.2021.70.8535},
 	URL = {https://0-doi-org.pugwash.lib.warwick.ac.uk/10.1512/iumj.2021.70.8535},
 }

@article {Hayes21b,
    AUTHOR = {Hayes, Ben},
     TITLE = {Harmonic models and {B}ernoullicity},
   JOURNAL = {Compos. Math.},
  FJOURNAL = {Compositio Mathematica},
    VOLUME = {157},
      YEAR = {2021},
    NUMBER = {10},
     PAGES = {2160--2198},
      ISSN = {0010-437X,1570-5846},
   MRCLASS = {37A15 (22D25 37A35 37A55 37B10)},
  MRNUMBER = {4304076},
MRREVIEWER = {Eusebio\ Gardella},
       DOI = {10.1112/s0010437x21007442},
       URL = {https://doi.org/10.1112/s0010437x21007442},
}

@article {HelLow58,
    AUTHOR = {Helson, Henry and Lowdenslager, David},
     TITLE = {Prediction theory and {F}ourier series in several variables},
   JOURNAL = {Acta Math.},
  FJOURNAL = {Acta Mathematica},
    VOLUME = {99},
      YEAR = {1958},
     PAGES = {165--202},
      ISSN = {0001-5962,1871-2509},
   MRCLASS = {42.00 (60.00)},
  MRNUMBER = {97688},
MRREVIEWER = {Edwin\ Hewitt},
       DOI = {10.1007/BF02392425},
       URL = {https://doi.org/10.1007/BF02392425},
}

@article {HelLow61,
    AUTHOR = {Helson, Henry and Lowdenslager, David},
     TITLE = {Prediction theory and {F}ourier series in several variables.
              {II}},
   JOURNAL = {Acta Math.},
  FJOURNAL = {Acta Mathematica},
    VOLUME = {106},
      YEAR = {1961},
     PAGES = {175--213},
      ISSN = {0001-5962,1871-2509},
   MRCLASS = {42.50 (60.50)},
  MRNUMBER = {176287},
MRREVIEWER = {Edwin\ Hewitt},
       DOI = {10.1007/BF02545786},
       URL = {https://doi.org/10.1007/BF02545786},
}

@book {HorJohMA,
	AUTHOR = {Horn, Roger A. and Johnson, Charles R.},
	TITLE = {Matrix {A}nalysis},
	EDITION = {Second},
	PUBLISHER = {Cambridge University Press, Cambridge},
	YEAR = {2013},
	PAGES = {xviii+643},
	ISBN = {978-0-521-54823-6},
	MRCLASS = {15-01},
	MRNUMBER = {2978290},
	MRREVIEWER = {Mohammad\ Sal\ Moslehian},
}

@book {HorJohTMA,
	AUTHOR = {Horn, Roger A. and Johnson, Charles R.},
	TITLE = {Topics in matrix analysis},
	NOTE = {Corrected reprint of the 1991 original},
	PUBLISHER = {Cambridge University Press, Cambridge},
	YEAR = {1994},
	PAGES = {viii+607},
	ISBN = {0-521-46713-6},
	MRCLASS = {15-02 (15-01 65-02 65Fxx)},
	MRNUMBER = {1288752},
}

@article {KatzWei72,
    AUTHOR = {Katznelson, Yitzhak and Weiss, Benjamin},
     TITLE = {Commuting measure-preserving transformations},
   JOURNAL = {Israel J. Math.},
  FJOURNAL = {Israel Journal of Mathematics},
    VOLUME = {12},
      YEAR = {1972},
     PAGES = {161--173},
      ISSN = {0021-2172},
   MRCLASS = {28A65},
  MRNUMBER = {316680},
MRREVIEWER = {W.\ Parry},
       DOI = {10.1007/BF02764660},
       URL = {https://doi.org/10.1007/BF02764660},
}

@article {KawTom77,
    AUTHOR = {Kawamura, Shinz\^o{} and Tomiyama, Jun},
     TITLE = {On subdiagonal algebras associated with flows in operator
              algebras},
   JOURNAL = {J. Math. Soc. Japan},
  FJOURNAL = {Journal of the Mathematical Society of Japan},
    VOLUME = {29},
      YEAR = {1977},
    NUMBER = {1},
     PAGES = {73--90},
      ISSN = {0025-5645,1881-1167},
   MRCLASS = {46L05},
  MRNUMBER = {454650},
MRREVIEWER = {Richard\ I.\ Loebl},
       DOI = {10.2969/jmsj/02910073},
       URL = {https://doi.org/10.2969/jmsj/02910073},
}

@book {Kec95,
	AUTHOR = {Kechris, Alexander S.},
	TITLE = {Classical descriptive set theory},
	SERIES = {Graduate Texts in Mathematics},
	VOLUME = {156},
	PUBLISHER = {Springer-Verlag, New York},
	YEAR = {1995},
	PAGES = {xviii+402},
	ISBN = {0-387-94374-9},
	MRCLASS = {03E15 (03-01 03-02 04A15 28A05 54H05 90D44)},
	MRNUMBER = {1321597},
	MRREVIEWER = {Jakub Jasi{\'n}ski},
	DOI = {10.1007/978-1-4612-4190-4},
	URL = {http://dx.doi.org/10.1007/978-1-4612-4190-4},
}

@book {KecGAEGA,
    AUTHOR = {Kechris, Alexander S.},
     TITLE = {Global aspects of ergodic group actions},
    SERIES = {Mathematical Surveys and Monographs},
    VOLUME = {160},
 PUBLISHER = {American Mathematical Society},
   ADDRESS = {Providence, RI},
      YEAR = {2010},
     PAGES = {xii+237},
      ISBN = {978-0-8218-4894-4},
   MRCLASS = {37A05 (22F10 28D15 37A15 37A20 37B05)},
  MRNUMBER = {2583950 (2011b:37003)},
MRREVIEWER = {Konstantin Medynets},
}

@article {KerLi11b,
    AUTHOR = {Kerr, David and Li, Hanfeng},
     TITLE = {Entropy and the variational principle for actions of sofic
              groups},
   JOURNAL = {Invent. Math.},
  FJOURNAL = {Inventiones Mathematicae},
    VOLUME = {186},
      YEAR = {2011},
    NUMBER = {3},
     PAGES = {501--558},
      ISSN = {0020-9910},
     CODEN = {INVMBH},
   MRCLASS = {37A35 (37B40)},
  MRNUMBER = {2854085},
MRREVIEWER = {Konstantin Medynets},
       DOI = {10.1007/s00222-011-0324-9},
       URL = {http://dx.doi.org/10.1007/s00222-011-0324-9},
}

@article {Kie75,
    AUTHOR = {Kieffer, J. C.},
     TITLE = {A generalized {S}hannon-{M}c{M}illan theorem for the action of
              an amenable group on a probability space},
   JOURNAL = {Ann. Probability},
    VOLUME = {3},
      YEAR = {1975},
    NUMBER = {6},
     PAGES = {1031--1037},
   MRCLASS = {28A65},
  MRNUMBER = {0393422 (52 \#14232)},
MRREVIEWER = {Paul C. Shields},
}

@incollection {KirKreLuc25,
    AUTHOR = {Kirstein, Dominik and Kremer, Christian and L\"uck, Wolfgang},
     TITLE = {Some problems and conjectures about {$L^2$}-invariants},
 BOOKTITLE = {Geometry and topology of aspherical manifolds},
    SERIES = {Contemp. Math.},
    VOLUME = {816},
     PAGES = {3--43},
 PUBLISHER = {Amer. Math. Soc., Providence, RI},
      YEAR = {[2025] \copyright 2025},
      ISBN = {978-1-4704-7495-9; [9781470478698]},
   MRCLASS = {57Q99 (20F99 46L99)},
  MRNUMBER = {4885752},
       DOI = {10.1090/conm/816/16349},
       URL = {https://doi.org/10.1090/conm/816/16349},
}

@incollection {KwiPes13,
	AUTHOR = {Pestov, Vladimir G. and Kwiatkowska, Aleksandra},
	TITLE = {An introduction to hyperlinear and sofic groups},
	BOOKTITLE = {Appalachian set theory 2006--2012},
	SERIES = {London Math. Soc. Lecture Note Ser.},
	VOLUME = {406},
	PAGES = {145--185},
	PUBLISHER = {Cambridge Univ. Press, Cambridge},
	YEAR = {2013},
	ISBN = {978-1-107-60850-4; 978-1-107-60850-1},
	MRCLASS = {03E15 (20F65)},
	MRNUMBER = {3821628},
}

@article {Lab05,
    AUTHOR = {Labuschagne, L. E.},
     TITLE = {A noncommutative {S}zeg\"o{} theorem for subdiagonal
              subalgebras of von {N}eumann algebras},
   JOURNAL = {Proc. Amer. Math. Soc.},
  FJOURNAL = {Proceedings of the American Mathematical Society},
    VOLUME = {133},
      YEAR = {2005},
    NUMBER = {12},
     PAGES = {3643--3646},
      ISSN = {0002-9939,1088-6826},
   MRCLASS = {46L10 (46L52)},
  MRNUMBER = {2163602},
MRREVIEWER = {David\ Sherman},
       DOI = {10.1090/S0002-9939-05-08064-0},
       URL = {https://doi.org/10.1090/S0002-9939-05-08064-0},
}

@article {Led92,
    AUTHOR = {Ledoux, M.},
     TITLE = {A heat semigroup approach to concentration on the sphere and
              on a compact {R}iemannian manifold},
   JOURNAL = {Geom. Funct. Anal.},
  FJOURNAL = {Geometric and Functional Analysis},
    VOLUME = {2},
      YEAR = {1992},
    NUMBER = {2},
     PAGES = {221--224},
      ISSN = {1016-443X},
   MRCLASS = {58G11 (58G25)},
  MRNUMBER = {1159831},
MRREVIEWER = {J. S. Joel},
       URL = {https://doi.org/10.1007/BF01896974},
}

@article {Li12,
    AUTHOR = {Li, Hanfeng},
     TITLE = {Compact group automorphisms, addition formulas and
              {F}uglede-{K}adison determinants},
   JOURNAL = {Ann. of Math. (2)},
  FJOURNAL = {Annals of Mathematics. Second Series},
    VOLUME = {176},
      YEAR = {2012},
    NUMBER = {1},
     PAGES = {303--347},
      ISSN = {0003-486X},
     CODEN = {ANMAAH},
   MRCLASS = {28D15 (28D20 37A15 46L10)},
  MRNUMBER = {2925385},
MRREVIEWER = {Fyodor A. Sukochev},
       DOI = {10.4007/annals.2012.176.1.5},
       URL = {http://dx.doi.org/10.4007/annals.2012.176.1.5},
}

@article {LiTho14,
	AUTHOR = {Li, Hanfeng and Thom, Andreas},
	TITLE = {Entropy, determinants, and {$L^2$}-torsion},
	JOURNAL = {J. Amer. Math. Soc.},
	FJOURNAL = {Journal of the American Mathematical Society},
	VOLUME = {27},
	YEAR = {2014},
	NUMBER = {1},
	PAGES = {239--292},
	ISSN = {0894-0347,1088-6834},
	MRCLASS = {37B40 (22D25 37A35 58J52)},
	MRNUMBER = {3110799},
	MRREVIEWER = {Nhan-Phu\ Chung},
	DOI = {10.1090/S0894-0347-2013-00778-X},
	URL = {https://0-doi-org.pugwash.lib.warwick.ac.uk/10.1090/S0894-0347-2013-00778-X},
}

@article {LindSchWar90,
	AUTHOR = {Lind, Douglas and Schmidt, Klaus and Ward, Tom},
	TITLE = {Mahler measure and entropy for commuting automorphisms of
	compact groups},
	JOURNAL = {Invent. Math.},
	FJOURNAL = {Inventiones Mathematicae},
	VOLUME = {101},
	YEAR = {1990},
	NUMBER = {3},
	PAGES = {593--629},
	ISSN = {0020-9910,1432-1297},
	MRCLASS = {22D40 (28D20)},
	MRNUMBER = {1062797},
	MRREVIEWER = {Meir\ Smorodinsky},
	DOI = {10.1007/BF01231517},
	URL = {https://0-doi-org.pugwash.lib.warwick.ac.uk/10.1007/BF01231517},
}

@article {Linnik75,
    AUTHOR = {Linnik, I. Ju.},
     TITLE = {A multidimensional analogue of {G}. {{S}zeg\H o}'s limit
              theorem},
   JOURNAL = {Izv. Akad. Nauk SSSR Ser. Mat.},
  FJOURNAL = {Izvestiya Akademii Nauk SSSR. Seriya Matematicheskaya},
    VOLUME = {39},
      YEAR = {1975},
    NUMBER = {6},
     PAGES = {1393--1403, 1439},
      ISSN = {0373-2436},
   MRCLASS = {42A08},
  MRNUMBER = {422974},
MRREVIEWER = {Dumitru\ Ga\c spar},
}

@article {Luc94,
    AUTHOR = {L{\"u}ck, W.},
     TITLE = {Approximating {$L^2$}-invariants by their finite-dimensional
              analogues},
   JOURNAL = {Geom. Funct. Anal.},
  FJOURNAL = {Geometric and Functional Analysis},
    VOLUME = {4},
      YEAR = {1994},
    NUMBER = {4},
     PAGES = {455--481},
      ISSN = {1016-443X,1420-8970},
   MRCLASS = {58G12 (46L85 57Q10)},
  MRNUMBER = {1280122},
MRREVIEWER = {Peter\ Haskell},
       DOI = {10.1007/BF01896404},
       URL = {https://doi.org/10.1007/BF01896404},
}

@book {Luc02,
    AUTHOR = {L{\"u}ck, Wolfgang},
     TITLE = {{$L\sp 2$}-invariants: theory and applications to geometry and
              {$K$}-theory},
    SERIES = {Ergebnisse der Mathematik und ihrer Grenzgebiete},
    VOLUME = {44},
 PUBLISHER = {Springer-Verlag},
   ADDRESS = {Berlin},
      YEAR = {2002},
     PAGES = {xvi+595},
      ISBN = {3-540-43566-2},
   MRCLASS = {58J22 (19K56 46L80 57Q10 57R20 58J52)},
  MRNUMBER = {MR1926649 (2003m:58033)},
MRREVIEWER = {Thomas Schick},
}

@article {LucSauWeg10,
    AUTHOR = {L\"uck, Wolfgang and Sauer, Roman and Wegner, Christian},
     TITLE = {{$L^2$}-torsion, the measure-theoretic determinant conjecture,
              and uniform measure equivalence},
   JOURNAL = {J. Topol. Anal.},
  FJOURNAL = {Journal of Topology and Analysis},
    VOLUME = {2},
      YEAR = {2010},
    NUMBER = {2},
     PAGES = {145--171},
      ISSN = {1793-5253,1793-7167},
   MRCLASS = {57Q10 (20F65 37A20 46L85)},
  MRNUMBER = {2652905},
MRREVIEWER = {Thomas\ Schick},
       DOI = {10.1142/S179352531000032X},
       URL = {https://doi.org/10.1142/S179352531000032X},
}

@article {Lyons03,
    AUTHOR = {Lyons, R.},
     TITLE = {{Szeg\H{o}} limit theorems},
   JOURNAL = {Geom. Funct. Anal.},
  FJOURNAL = {Geometric and Functional Analysis},
    VOLUME = {13},
      YEAR = {2003},
    NUMBER = {3},
     PAGES = {574--590},
      ISSN = {1016-443X,1420-8970},
   MRCLASS = {28C10 (28A12 47B35)},
  MRNUMBER = {1995800},
MRREVIEWER = {W.\ A. J. Luxemburg},
       DOI = {10.1007/s00039-003-0423-x},
       URL = {https://doi.org/10.1007/s00039-003-0423-x},
}

@article {Lyons05,
	AUTHOR = {Lyons, Russell},
	TITLE = {Asymptotic enumeration of spanning trees},
	JOURNAL = {Combin. Probab. Comput.},
	FJOURNAL = {Combinatorics, Probability and Computing},
	VOLUME = {14},
	YEAR = {2005},
	NUMBER = {4},
	PAGES = {491--522},
	ISSN = {0963-5483,1469-2163},
	MRCLASS = {05C05 (05C50 60C05 60G50)},
	MRNUMBER = {2160416},
	MRREVIEWER = {Tatiana\ Smirnova-Nagnibeda},
	DOI = {10.1017/S096354830500684X},
	URL = {https://0-doi-org.pugwash.lib.warwick.ac.uk/10.1017/S096354830500684X},
}

@article {Lyons10,
	AUTHOR = {Lyons, Russell},
	TITLE = {Identities and inequalities for tree entropy},
	JOURNAL = {Combin. Probab. Comput.},
	FJOURNAL = {Combinatorics, Probability and Computing},
	VOLUME = {19},
	YEAR = {2010},
	NUMBER = {2},
	PAGES = {303--313},
	ISSN = {0963-5483,1469-2163},
	MRCLASS = {60B15 (05C05 28C99 46L54 60C05)},
	MRNUMBER = {2593624},
	MRREVIEWER = {Tatiana\ Smirnova-Nagnibeda},
	}

@book {Mac76,
    AUTHOR = {Mackey, George W.},
     TITLE = {The theory of unitary group representations},
      NOTE = {Based on notes by James M. G. Fell and David B. Lowdenslager
              of lectures given at the University of Chicago, Chicago, Ill.,
              1955,
              Chicago Lectures in Mathematics},
 PUBLISHER = {University of Chicago Press},
   ADDRESS = {Chicago, Ill.},
      YEAR = {1976},
     PAGES = {x+372},
   MRCLASS = {22-02},
  MRNUMBER = {MR0396826 (53 \#686)},
}

@unpublished{Magee--survey,
	author = {Magee, M.},
	title = {Strong convergence of unitary and permutation representations of discrete groups},
	year = {2025},
	note = {Available online at \verb|arXiv.org|:2503.21619},
}

@article {MuhSol89,
    AUTHOR = {Muhly, Paul S. and Solel, Baruch},
     TITLE = {Subalgebras of groupoid {$C^*$}-algebras},
   JOURNAL = {J. Reine Angew. Math.},
  FJOURNAL = {Journal f\"ur die Reine und Angewandte Mathematik. [Crelle's
              Journal]},
    VOLUME = {402},
      YEAR = {1989},
     PAGES = {41--75},
      ISSN = {0075-4102,1435-5345},
   MRCLASS = {46L05 (47D25)},
  MRNUMBER = {1022793},
MRREVIEWER = {Jean\ N.\ Renault},
       DOI = {10.1515/crll.1989.402.41},
       URL = {https://doi.org/10.1515/crll.1989.402.41},
}

@article {Nai43,
	AUTHOR = {Neumark, M.},
	TITLE = {Positive definite operator functions on a commutative group},
	JOURNAL = {Bull. Acad. Sci. URSS. S\'er. Math. [Izvestia Akad. Nauk
	SSSR]},
	FJOURNAL = {Bull. Acad. Sci. URSS. S\'er. Math. [Izvestia Akad. Nauk
	SSSR]},
	VOLUME = {7},
	YEAR = {1943},
	PAGES = {237--244},
	MRCLASS = {46.0X},
	MRNUMBER = {10265},
	MRREVIEWER = {S.\ Bochner},
}

@article {Oza13,
    AUTHOR = {Ozawa, Narutaka},
     TITLE = {About the {C}onnes embedding conjecture: algebraic approaches},
   JOURNAL = {Jpn. J. Math.},
  FJOURNAL = {Japanese Journal of Mathematics},
    VOLUME = {8},
      YEAR = {2013},
    NUMBER = {1},
     PAGES = {147--183},
      ISSN = {0289-2316,1861-3624},
   MRCLASS = {46-02 (16W80 46Lxx)},
  MRNUMBER = {3067294},
       DOI = {10.1007/s11537-013-1280-5},
       URL = {https://doi.org/10.1007/s11537-013-1280-5},
}

@book {Pin64,
    AUTHOR = {Pinsker, M. S.},
     TITLE = {Information and information stability of random variables and processes},
 PUBLISHER = {Holden-Day, Inc., San Francisco-California},
      YEAR = {1964},
}

@incollection {PisXuNCLP,
    AUTHOR = {Pisier, Gilles and Xu, Quanhua},
     TITLE = {Non-commutative {$L^p$}-spaces},
 BOOKTITLE = {Handbook of the geometry of {B}anach spaces, {V}ol.\ 2},
     PAGES = {1459--1517},
 PUBLISHER = {North-Holland, Amsterdam},
      YEAR = {2003},
      ISBN = {0-444-51305-1},
   MRCLASS = {46L52 (46L51)},
  MRNUMBER = {1999201},
MRREVIEWER = {Narcisse\ Randrianantoanina},
       DOI = {10.1016/S1874-5849(03)80041-4},
       URL = {https://doi.org/10.1016/S1874-5849(03)80041-4},
}

@incollection {Rad08,
	AUTHOR = {R\u{a}dulescu, Florin},
	TITLE = {The von {N}eumann algebra of the non-residually finite
	{B}aumslag group {$\langle a,b|ab^3a^{-1}=b^2\rangle$} embeds
	into {$R^\omega$}},
	BOOKTITLE = {Hot topics in operator theory},
	SERIES = {Theta Ser. Adv. Math.},
	VOLUME = {9},
	PAGES = {173--185},
	PUBLISHER = {Theta, Bucharest},
	YEAR = {2008},
	ISBN = {978-973-87899-2-0},
	MRCLASS = {46L10 (20F05)},
	MRNUMBER = {2436761},
	MRREVIEWER = {Mart\'in\ Argerami},
}

@book {ReeSimFA,
    AUTHOR = {Reed, Michael and Simon, Barry},
     TITLE = {Methods of {M}odern {M}athematical {P}hysics. {I}. {F}unctional
              {A}nalysis},
 PUBLISHER = {Academic Press, New York-London},
      YEAR = {1972},
     PAGES = {xvii+325},
   MRCLASS = {47-02 (81.47)},
  MRNUMBER = {493419},
MRREVIEWER = {P.\ R.\ Chernoff},
}

@book {Rudin--realcplx,
    AUTHOR = {Rudin, Walter},
     TITLE = {Real and complex analysis},
   EDITION = {Third},
 PUBLISHER = {McGraw-Hill Book Co., New York},
      YEAR = {1987},
     PAGES = {xiv+416},
      ISBN = {0-07-054234-1},
   MRCLASS = {00A05 (26-01 30-01 46-01)},
  MRNUMBER = {924157 (88k:00002)},
}

@book {RudinFTUB,
    AUTHOR = {Rudin, Walter},
     TITLE = {Function theory in the unit ball of {$\Bbb C^n$}},
    SERIES = {Classics in Mathematics},
      NOTE = {Reprint of the 1980 edition},
 PUBLISHER = {Springer-Verlag, Berlin},
      YEAR = {2008},
     PAGES = {xiv+436},
      ISBN = {978-3-540-68272-1},
   MRCLASS = {32-02 (32Axx 46E15 46E20)},
  MRNUMBER = {2446682},
}

@article {Schi01,
    AUTHOR = {Schick, Thomas},
     TITLE = {{$L^2$}-determinant class and approximation of {$L^2$}-{B}etti
              numbers},
   JOURNAL = {Trans. Amer. Math. Soc.},
  FJOURNAL = {Transactions of the American Mathematical Society},
    VOLUME = {353},
      YEAR = {2001},
    NUMBER = {8},
     PAGES = {3247--3265},
      ISSN = {0002-9947,1088-6850},
   MRCLASS = {58J52 (55N25 55P25 57Q10)},
  MRNUMBER = {1828605},
MRREVIEWER = {Patrick\ T.\ McDonald},
       DOI = {10.1090/S0002-9947-01-02699-X},
       URL = {https://doi.org/10.1090/S0002-9947-01-02699-X},
}

@article {Seg53,
    AUTHOR = {Segal, I. E.},
     TITLE = {A non-commutative extension of abstract integration},
   JOURNAL = {Ann. of Math. (2)},
  FJOURNAL = {Annals of Mathematics. Second Series},
    VOLUME = {57},
      YEAR = {1953},
     PAGES = {401--457},
      ISSN = {0003-486X},
   MRCLASS = {46.0X},
  MRNUMBER = {54864},
MRREVIEWER = {J.\ Dixmier},
       DOI = {10.2307/1969729},
       URL = {https://doi.org/10.2307/1969729},
}

@article {Seg53b,
    AUTHOR = {Segal, I. E.},
     TITLE = {Correction to ``{A} non-commutative extension of abstract
              integration''},
   JOURNAL = {Ann. of Math. (2)},
  FJOURNAL = {Annals of Mathematics. Second Series},
    VOLUME = {58},
      YEAR = {1953},
     PAGES = {595--596},
      ISSN = {0003-486X},
   MRCLASS = {27.2X},
  MRNUMBER = {57306},
       DOI = {10.2307/1969759},
       URL = {https://doi.org/10.2307/1969759},
}

@Unpublished{Seward--randomorder,
author = {Seward, Brandon},
title = {Weak containment and {R}okhlin entropy},
note = {Preprint, available online at \verb|arXiv.org|: 1602.06680},
}

@book {SimSMLG,
	AUTHOR = {Simon, Barry},
	TITLE = {The statistical mechanics of lattice gases. {V}ol. {I}},
	SERIES = {Princeton Series in Physics},
	PUBLISHER = {Princeton University Press, Princeton, NJ},
	YEAR = {1993},
	PAGES = {x+522},
	ISBN = {0-691-08779-2},
	MRCLASS = {82-02 (46L60 46N55 47N55 82B20)},
	MRNUMBER = {1239893},
	MRREVIEWER = {H.\ Araki},
	DOI = {10.1515/9781400863433},
	URL = {https://doi.org/10.1515/9781400863433},
}

@book {SimTIA,
    AUTHOR = {Simon, Barry},
     TITLE = {Trace ideals and their applications},
    SERIES = {Mathematical Surveys and Monographs},
    VOLUME = {120},
   EDITION = {Second},
 PUBLISHER = {American Mathematical Society, Providence, RI},
      YEAR = {2005},
     PAGES = {viii+150},
      ISBN = {0-8218-3581-5},
   MRCLASS = {47L20 (47A40 47A55 47B10 47B36 47E05 81Q15 81U99)},
  MRNUMBER = {2154153},
MRREVIEWER = {Pavel\ B.\ Kurasov},
       DOI = {10.1090/surv/120},
       URL = {https://doi.org/10.1090/surv/120},
}

@book {SimOPUCI,
	AUTHOR = {Simon, Barry},
	TITLE = {Orthogonal polynomials on the unit circle. {P}art 1},
	SERIES = {American Mathematical Society Colloquium Publications},
	VOLUME = {54, Part 1},
	NOTE = {Classical theory},
	PUBLISHER = {American Mathematical Society, Providence, RI},
	YEAR = {2005},
	PAGES = {xxvi+466},
	ISBN = {0-8218-3446-0},
	MRCLASS = {42-02 (30C85 33C45 42C05 47B36 47N50)},
	MRNUMBER = {2105088},
	MRREVIEWER = {P.\ L.\ Duren},
	DOI = {10.1090/coll054.1},
	URL = {https://doi.org/10.1090/coll054.1},
}

@article {Stepin78,
    AUTHOR = {Stepin, A. M.},
     TITLE = {Equidistribution of the entropy of amenable groups of
              transformations},
   JOURNAL = {Akad. Nauk Azerba\u idzhan. SSR Dokl.},
  FJOURNAL = {Akademiya Nauk Azerba\u idzhansko\u i\ SSR. Doklady},
    VOLUME = {34},
      YEAR = {1978},
    NUMBER = {9},
     PAGES = {3--7},
      ISSN = {0002-3078},
   MRCLASS = {28D20},
  MRNUMBER = {522690},
MRREVIEWER = {B.\ S.\ Pickel\cprime},
}

@article {Tak58,
    AUTHOR = {Takesaki, Masamichi},
     TITLE = {On the conjugate space of operator algebra},
   JOURNAL = {Tohoku Math. J. (2)},
  FJOURNAL = {The Tohoku Mathematical Journal. Second Series},
    VOLUME = {10},
      YEAR = {1958},
     PAGES = {194--203},
      ISSN = {0040-8735,2186-585X},
   MRCLASS = {46.00},
  MRNUMBER = {100799},
MRREVIEWER = {J.\ Feldman},
       DOI = {10.2748/tmj/1178244713},
       URL = {https://doi.org/10.2748/tmj/1178244713},
}

@article {Voi76,
	AUTHOR = {Voiculescu, Dan},
	TITLE = {A non-commutative {W}eyl-von {N}eumann theorem},
	JOURNAL = {Rev. Roumaine Math. Pures Appl.},
	FJOURNAL = {Acad\'emie de la R\'epublique Populaire Roumaine. Revue
	Roumaine de Math\'ematiques Pures et Appliqu\'ees},
	VOLUME = {21},
	YEAR = {1976},
	NUMBER = {1},
	PAGES = {97--113},
	ISSN = {0035-3965},
	MRCLASS = {46L05 (47A65 47C10)},
	MRNUMBER = {415338},
	MRREVIEWER = {Ronald\ G.\ Douglas},
}

@article {Voi94,
    AUTHOR = {Voiculescu, Dan},
     TITLE = {The analogues of entropy and of {F}isher's information measure
              in free probability theory. {II}},
   JOURNAL = {Invent. Math.},
  FJOURNAL = {Inventiones Mathematicae},
    VOLUME = {118},
      YEAR = {1994},
    NUMBER = {3},
     PAGES = {411--440},
      ISSN = {0020-9910,1432-1297},
   MRCLASS = {46L50 (81S25 94A17)},
  MRNUMBER = {1296352},
MRREVIEWER = {Sh.\ A.\ Ayupov},
       DOI = {10.1007/BF01231539},
       URL = {https://doi.org/10.1007/BF01231539},
}

@article {Voi02--survey,
	AUTHOR = {Voiculescu, Dan},
	TITLE = {Free entropy},
	JOURNAL = {Bull. London Math. Soc.},
	FJOURNAL = {The Bulletin of the London Mathematical Society},
	VOLUME = {34},
	YEAR = {2002},
	NUMBER = {3},
	PAGES = {257--278},
	ISSN = {0024-6093,1469-2120},
	MRCLASS = {46L54},
	MRNUMBER = {1887698},
	MRREVIEWER = {Dimitri\ Y.\ Shlyakhtenko},
}

@article {vonNeu34,
	AUTHOR = {v. Neumann, J.},
	TITLE = {Almost periodic functions in a group. {I}},
	JOURNAL = {Trans. Amer. Math. Soc.},
	FJOURNAL = {Transactions of the American Mathematical Society},
	VOLUME = {36},
	YEAR = {1934},
	NUMBER = {3},
	PAGES = {445--492},
	ISSN = {0002-9947,1088-6850},
	MRCLASS = {43A60 (42A75)},
	MRNUMBER = {1501752},
	DOI = {10.2307/1989792},
	URL = {https://doi.org/10.2307/1989792},
}

@book {Walters--book,
    AUTHOR = {Walters, Peter},
     TITLE = {An {I}ntroduction to {E}rgodic {T}heory},
    SERIES = {Graduate Texts in Mathematics},
    VOLUME = {79},
 PUBLISHER = {Springer-Verlag, New York-Berlin},
      YEAR = {1982},
     PAGES = {ix+250},
      ISBN = {0-387-90599-5},
   MRCLASS = {28Dxx (54H20 58F11)},
  MRNUMBER = {648108 (84e:28017)},
MRREVIEWER = {M. A. Akcoglu},
}

@book {Zim84,
    AUTHOR = {Zimmer, Robert J.},
     TITLE = {Ergodic {T}heory and {S}emisimple {G}roups},
    SERIES = {Monographs in Mathematics},
    VOLUME = {81},
 PUBLISHER = {Birkh\"auser Verlag},
   ADDRESS = {Basel},
      YEAR = {1984},
     PAGES = {x+209},
      ISBN = {3-7643-3184-4},
   MRCLASS = {22E40 (22D40 28D15)},
  MRNUMBER = {MR776417 (86j:22014)},
MRREVIEWER = {S. G. Dani},
}
\bibliographystyle{abbrv}

\begin{small}
\noindent Mathematics Institute\\ Zeeman Building\\ University of Warwick\\ Coventry CV4 7AL\\ United Kingdom\\ \href{mailto:Tim.Austin@warwick.ac.uk}{\texttt{Tim.Austin@warwick.ac.uk}}
\end{small}

\end{document}